\def\cal{\mathcal}
\def\Bbb{\mathbb}
\newenvironment{pf*}[1]{\proof[#1]}{\endproof}
\renewcommand{\thesubsection}{\thesection(\@roman\c@subsection)}
\newtheorem{Theorem}[equation]{Theorem}
\newtheorem{Corollary}[equation]{Corollary}
\newtheorem{Lemma}[equation]{Lemma}
\newtheorem{Proposition}[equation]{Proposition}
\theoremstyle{definition}
\newtheorem{Definition}[equation]{Definition}
\newtheorem{Example}[equation]{Example}
\newtheorem*{Addendum}{Addendum}
\newtheorem*{acknowledgement}{Acknowledgement}
\renewcommand\section{\@startsection{section}{1}%
  {\z@}{.7\linespacing\@plus\linespacing}{.5\linespacing}%
  {\reset@font\normalfont\bfseries\centering}}
\theoremstyle{remark}
\newtheorem{Remark}[equation]{Remark}
\numberwithin{equation}{section}
\numberwithin{figure}{section}
\newcommand{\thmref}[1]{Theorem~\ref{#1}}
\newcommand{\lemref}[1]{Lemma~\ref{#1}}
\newcommand{\propref}[1]{Proposition~\ref{#1}}
\newcommand{\remref}[1]{Remark~\ref{#1}}
\newcommand{\corref}[1]{Corollary~\ref{#1}}
\newcommand{\subsecref}[1]{\S\ref{#1}}
\newcommand{\Romnum}[1]{\expandafter\uppercase\expandafter{\romannumeral #1}} 
\newcommand{\C}{{\Bbb C}}
\newcommand{\Z}{{\Bbb Z}}
\newcommand{\Q}{{\Bbb Q}}
\newcommand{\R}{{\Bbb R}}
\newcommand{\HH}{{\Bbb H}}
\newcommand{\CP}{\operatorname{\C P}}
\newcommand{\RP}{\operatorname{\R P}}
\newcommand{\SO}{\operatorname{\rm SO}}
\newcommand{\U}{\operatorname{\rm U}}
\newcommand{\Pin}{\operatorname{\rm Pin}}
\newcommand{\Spin}{\operatorname{\rm Spin}}
\newcommand{\Spinc}{\Spin^{c}}
\newcommand{\SP}{\operatorname{\rm Sp}}
\newcommand{\End}{\operatorname{End}}
\newcommand{\Map}{\operatorname{Map}}
\newcommand{\Hom}{\operatorname{Hom}}
\newcommand{\Ima}{\operatorname{Im}}
\newcommand{\rank}{\operatorname{rank}}
\newcommand{\sign}{\operatorname{sign}}
\newcommand{\supp}{\operatorname{supp}}%
\newcommand{\ind}{\mathop{\text{\rm ind}}\nolimits}
\newcommand{\colim}{\operatorname{colim}}
\newcommand{\M}{{\cal M}} % the moduli space
\newcommand{\G}{{\cal G}} % the gauge transformation group
\newcommand{\id}{\operatorname{id}}
\newcommand{\deux}{\{\pm 1\}}
\newcommand{\Diff}{\operatorname{Diff}}
\newcommand{\Homeo}{\operatorname{Homeo}}
\newcommand{\Aut}{\operatorname{Aut}}
\newcommand{\fs}{\mathfrak{s}}
\begin{document}
\title[Rigidity of the Seiberg-Witten invariants for spin families]{Rigidity of the mod 2 families Seiberg-Witten invariants and topology of families of spin 4-manifolds}

\author{Tsuyoshi Kato}
\address{Department of Mathematics, Kyoto University, Japan}
\email{tkato@math.kyoto-u.ac.jp}
\author{Hokuto Konno}
\address{Graduate School of Mathematical Sciences, the University of Tokyo, 3-8-1 Komaba, Meguro, Tokyo 153-8914, Japan \\and\\
RIKEN iTHEMS, Wako, Saitama 351-0198, Japan}
\email{konno@ms.u-tokyo.ac.jp}
\author{Nobuhiro Nakamura}
\address{Department of Mathematics, Osaka Medical College, 2-7 Daigaku-machi, Takatsuki City, Osaka, 569-8686, Japan}
\email{mat002@osaka-med.ac.jp}

\begin{abstract}
We show a rigidity theorem for the Seiberg--Witten invariants mod 2 for families of spin 4-manifolds.
A mechanism of this rigidity theorem also gives a family version of 10/8-type inequality.
As an application, we prove the existence of non-smoothable topological families of 4-manifolds whose fiber, base space, and total space are smoothable as manifolds.
These non-smoothable topological families provide new examples of $4$-manifolds $M$ for which the inclusion maps $\Diff(M) \hookrightarrow \Homeo(M)$ are not weak homotopy equivalences.
We shall also give a new series of non-smoothable topological actions on some spin $4$-manifolds.
\end{abstract}

\maketitle

\tableofcontents

\section{Introduction}

The Seiberg--Witten invariant is an integer-valued differential topological invariant of a $\Spinc$ $4$-manifold, which reflects the smooth structure for various examples of $4$-manifolds.
Nevertheless, if the $\Spinc$-structure is induced from a spin structure, one may expect
 a sort of ``rigidity theorem'' for the Seiberg--Witten invariant mod $2$.
Namely, the value  of the Seiberg--Witten invariant mod $2$ may depend only on
some underlying topological structure of the smooth manifold, such as on  its homotopy type.
Such kind of results have been obtained by Morgan--Szab\'{o}~\cite{MS}, Ruberman--Strle~\cite{MR1809302}, Bauer~\cite{B}
and Li~\cite{MR2264722, MR1868921}.

Not only for a single $4$-manifold, various authors developed gauge theory for families of $4$-manifolds with many interesting applications, such as \cite{MR1671187,MR1734421,MR1874146,MR1868921,MR2652709,Baraglia,MR2644908}.
In particular, for a smooth family of $4$-manifolds, the families Seiberg--Witten invariant has been defined as an $\Z$- or $\Z/2$-valued invariant.

In this paper, we study a family version of rigidity results on the $\Z/2$-valued Seiberg--Witten invariant.
Namely, for a given family of spin $4$-manifolds with some topological conditions, we consider the $\Z/2$-valued families Seiberg--Witten invariant, and verify 
 that it depends only on weaker information than what is a priori expected.
Roughly speaking, we verify that the $\Z/2$-valued families Seiberg--Witten invariant is determined by the linearization of a family of Seiberg--Witten equations.
A mechanism of this rigidity theorem also gives a family version of Furuta's 10/8-inequality \cite{MR1839478} in a suitable situation.

This family version of 10/8-type inequality gives us the following topological applications:
we prove the existence of a {\it non-smoothable family} of $4$-manifolds whose fiber, base space, and the  total space are smoothable as manifolds.
To our knowledge, this interesting topological phenomenon has not been discussed so far.
This non-smoothability result gives a new approach to detect homotopical difference between the diffeomorphism and homeomorphism groups of $4$-manifolds.
For example, let $M$ be a smooth $4$-manifold which is homeomorphic to $K3\# n S^{2} \times S^{2}$ with $0\leq n \leq 3$. 
Then it follows from our non-smoothability result that the inclusion map from the diffeomorphism group to the homeomorphism group
\[
\Diff(M) \hookrightarrow \Homeo(M)
\]
is not a weak homotopy equivalence.
This result has not been known even when $M$ is diffeomorphic to $K3\# n S^{2} \times S^{2}$ with $n>0$.
As another application, we shall also detect a new series of non-smoothable topological actions on some spin $4$-manifolds using the family version of the 10/8-inequality.

Let us summarize the statements of our main theorems and their applications.
Henceforth, all manifolds are assumed to be connected.
Let $B$ be a closed smooth manifold,
$M$  a closed smooth $4$-manifold equipped with a spin structure $\fs$
and $M \to X \to B$ be a fiber bundle whose structure group is $\Diff^{+}(M)$, the group of diffeomorphisms preserving the orientation.
Assume that $X$ admits a fiberwise spin structure $\fs_{X}$ whose fiber coincides with the given spin structure on $M$.
 We call  it a {\it global spin structure} modelled on $\fs$
(See Subsection~\ref{subsection: Families Seiberg--Witten invariants}).
In this situation, we have two real bundles over $B$:
$
H^{+} \to B $ and 
$\ind{D}$,
where the fiber of $H^+$ is 
$H^{+}(M)$ which  is 
 a maximal-dimensional positive definite subspace of $H^{2}(M;\R)$ with respect to the intersection form, 
and $\ind{D}$ is 
 the virtual Dirac index bundle associated to $X \to B$.
 Note that the Dirac operator $D$ is $\Pin(2)$-equivariant since $D$ is $\HH$-linear.
 We define the $\Pin(2)$-action on $H^+$ via the surjective homomorphism $\Pin(2)\to \Pin(2)/S^1 =\deux$ and the multiplication of $\deux$ to real vector spaces.
 Then $\ind D$ and $H^+$ determine an element in the $\Pin(2)$-equivariant $KO$-group: 
\[
\alpha = \alpha(X,\fs_{X}) := [\ind{D}] - [H^{+}] \in KO_{\Pin(2)}(B).
\]
Let  $b_{+}(M) := \dim H^{+}(M)$.

If $b_{+}(M) \geq \dim{B} +2$, we can define the {\it  (mod $2$) families Seiberg--Witten invariant}
\[
FSW^{\Z_2}(X,\fs_{X}) \in \Z/2
\]
of $(X,\fs_{X})$
(See Subsection~\ref{subsection: Families Seiberg--Witten invariants}).
The first main result in this paper claims that $FSW^{\Z_2}(X,\fs_{X})$ depends only on  $\alpha(X,\fs_{X})$
which is determined  by the linearization of a family of Seiberg--Witten equations:

\begin{Theorem}[(Theorem~\ref{thm:main})]
\label{theo: main intro}
Let $M_1$ and $M_2$ be oriented closed smooth $4$-manifolds with spin structures 
$\fs_1$ and $\fs_2$ respectively. Assume   the following conditions:
\begin{itemize}
	\item[$\bullet$] $b_1(M_1)=b_1(M_2)=0$, $\ b_+(M_1)=b_+(M_2)\geq \dim B + 2$.
	\item[$\bullet$] $-\dfrac{\sign(M_i)}{4} -1-b_+(M_i)+\dim B =0$ $\ (i=1,2)$.
\end{itemize}	
For $i=1,2$, let $X_i\to B $ be a smooth fiber bundle whose  fiber is  $M_i$ equipped with  a global spin structure $\fs_{X_i}$
 modelled on $\fs_i$.
 
If $\alpha(X_{1},\fs_{X_{1}})=\alpha(X_{2},\fs_{X_{2}})$ holds  in $KO_{\Pin(2)}(B)$, then the equality
\[
FSW^{\Z_2}(X_1,\fs_{X_1}) = FSW^{\Z_2}(X_2,\fs_{X_2})
\]
holds.
\end{Theorem}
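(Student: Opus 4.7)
The natural framework is the families Bauer--Furuta construction. For each $X_i \to B$ with its global spin structure, a fiberwise finite-dimensional approximation of the family of Seiberg--Witten equations produces a proper $\Pin(2)$-equivariant fiberwise map of vector bundles over $B$ which, after one-point compactification along fibers, becomes a $\Pin(2)$-equivariant stable map
$$
\mu_i \colon \mathrm{Th}(\ind D_i) \longrightarrow \mathrm{Th}(H^{+}_i),
$$
where $\mathrm{Th}$ denotes the fiberwise Thom space. This is the families Bauer--Furuta invariant of $(X_i,\fs_{X_i})$, and the virtual bundles appearing as source and target are exactly the pieces of $\alpha(X_i,\fs_{X_i}) = [\ind D_i] - [H^+_i] \in KO_{\Pin(2)}(B)$. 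Under the dimensional condition $-\sign(M_i)/4 - 1 - b_+(M_i) + \dim B = 0$, the difference of virtual $\Pin(2)$-dimensions of source and target is precisely the one that makes $FSW^{\Z_2}$ the canonical $\Z/2$-valued invariant extracted from the stable $\Pin(2)$-equivariant homotopy class of $\mu_i$.

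Given the equality $\alpha(X_1,\fs_{X_1}) = \alpha(X_2,\fs_{X_2})$ in $KO_{\Pin(2)}(B)$, I would invoke the standard stable splitting to obtain a $\Pin(2)$-equivariant isomorphism
$$
\ind D_1 \oplus H^+_2 \oplus V \;\cong\; \ind D_2 \oplus H^+_1 \oplus V
$$
for some auxiliary $\Pin(2)$-bundle $V$ over $B$. After stabilizing each $\mu_i$ by $V$ and applying this isomorphism on source and target, $\mu_1$ and $\mu_2$ live in the same $\Pin(2)$-equivariant stable cohomotopy set and can be directly compared.

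The heart of the argument is then to show that in this common receptacle the two classes produce the same element of $\Z/2$. I would establish this in two parts. First, in the borderline virtual dimension forced by the signature assumption, the relevant $\Pin(2)$-equivariant stable cohomotopy group receiving the invariant is a single copy of $\Z/2$; this is the families analogue of the dimension count behind Furuta's $10/8$-inequality. Second, a fiberwise $\Pin(2)$-equivariant version of Furuta's argument identifies this $\Z/2$ with $FSW^{\Z_2}$ and, crucially, identifies its value with a $KO_{\Pin(2)}$-characteristic of the virtual bundle $\alpha$ alone, so that the theorem follows from $\alpha(X_1,\fs_{X_1}) = \alpha(X_2,\fs_{X_2})$.

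The main obstacle will be the second part: proving that the genuinely nonlinear Seiberg--Witten term contributes nothing to the $\Z/2$-invariant at this exact borderline dimension. This requires a careful analysis of $\Pin(2)$-equivariant stable homotopy of the relevant representation spheres in low degrees, fiberwise compactness of the families moduli spaces over $B$, and the construction of compatible finite-dimensional approximations for the two families simultaneously so that the comparison between $\mu_1$ and $\mu_2$ is well-defined up to the required $\Pin(2)$-equivariant proper homotopy.
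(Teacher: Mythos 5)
Your setup is the right one and matches the paper's: both invariants are realized as classes $\beta_1',\beta_2'$ in a common $\Pin(2)$-equivariant stable cohomotopy set $\{TF,S^V\}^{\Pin(2)}$ with $[F]-[\underline V]=\alpha_1=\alpha_2$, and $FSW^{\Z_2}$ is extracted from them. But the core of the theorem is exactly the step you defer to ``a fiberwise $\Pin(2)$-equivariant version of Furuta's argument,'' and neither of your two sub-claims supplies it. Your first claim --- that the $\Pin(2)$-equivariant receptacle is a single copy of $\Z/2$ --- is not what happens (in the paper the relevant group is $H^k_{\Z_2}(U,\partial U;\pi_kS^V)$ for the free part $U$ of $TF/S^1$, which is $\Z$ or $\Z_2$ depending on orientability), and even if it were $\Z/2$ that would not force the two classes to coincide. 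Your second claim --- that the value is ``identified with a $KO_{\Pin(2)}$-characteristic of $\alpha$ alone'' --- is a restatement of the theorem, not an argument; nothing in the proposal explains why the nonlinear part cannot change the answer.

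The missing idea is the following, and it is not a Furuta-style dimension count but an equivariant obstruction-theory computation. The invariant is $FSW^{\Z_2}=\deg\varphi(\beta_i')$, where $\varphi\colon\{TF,S^V\}^{\Pin(2)}\to\{TF,S^V\}^{S^1}$ forgets down to the $S^1$-action and $\deg$ is the mod $2$ difference obstruction against the unit $\beta_0$. By additivity of difference obstruction classes,
\[
\deg\varphi(\beta_1')-\deg\varphi(\beta_2')=\gamma_{S^1}\bigl(\varphi(\beta_1'),\varphi(\beta_2')\bigr)
=\bar\varphi\bigl(\gamma_{\Pin(2)}(\beta_1',\beta_2')\bigr),
\]
where $\bar\varphi\colon H^k_{\Z_2}(U,\partial U;\pi_kS^V)\to H^k(U,\partial U;\Z)$ is the forgetful map on top-degree Bredon cohomology of the free $\Z_2$-space $U$. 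Because this map factors through pullback along the free double cover $U\to U/\Z_2$, it is multiplication by $2$ on the top cohomology, so the difference vanishes mod $2$. Hence $\deg\circ\varphi$ is \emph{constant} on the entire set $\{TF,S^V\}^{\Pin(2)}$, which depends only on $\alpha$; this is what makes the invariant insensitive to the nonlinear term. Without this ``multiplication by $2$'' mechanism (or an equivalent transfer argument), your outline does not close.
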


In general,
 it is not easy  to calculate $FSW^{\Z_2}(X,\fs)$, since it  is defined  by counting the solutions to a system of the non-linear partial differential equations.
Compared with $FSW^{\Z_2}(X,\fs)$, the linearized data $\alpha(X,\fs_{X})$ is easier to handle.
This allows us to obtain some interesting applications described below.

Combining the rigidity result Theorem~\ref{theo: main intro} 
with a non-vanishing theorem for a specific family of $4$-manifolds  in \cite{BK}, we can obtain non-vanishing of 
the families Seiberg--Witten invariants for some class of families.
This non-vanishing result and a family version of the  argument in \cite{FKM} give us a family version of 10/8-type inequality as follows.
Let $\ell$ be the unique non-trivial real line bundle over $S^1$,
 and $\pi_i\colon T^n=S^1\times\cdots\times S^1\to S^1$ be the projection to the $i$-th  component.
Let us define the real vector bundle $\xi_n$ over $T^n$ by
\[
\xi_n=\pi_1^*\ell\oplus\cdots\oplus \pi_n^*\ell.
\]

\begin{Theorem}[(Corollary~\ref{cor: cor2 revised})]
\label{theo: cor2 revised intro}
Let $M$ be a $4$-manifold with $\sign(M)=-16$ and $b_1(M)=0$.
Let $\fs$ be a spin structure on $M$ and $f_1,\ldots,f_n$ be self-diffeomorphisms on $M$ whose supports $\supp{f}_{1}, \ldots, \supp{f}_{n}$ are mutually disjoint.
Let $H^+\to T^n$ be the bundle of $H^+(M)$ associated to the multiple mapping torus of $f_{1}, \ldots, f_{n}$.
Suppose that each of $f_1,\ldots,f_n$ preserves $\fs$ and that there exists a non-negative integer $a$ such that
\[
H^+ \cong \xi_n\oplus \underline{\R}^a,
\]
where $\underline{\R}^a$ denotes the trivial bundle over $T^{n}$ with fiber $\R^{a}$.
Then the inequality
\begin{align}
b_+(M)\geq n+3
\label{eq: intro 10/8 diffeoms}
\end{align}
holds.
\end{Theorem}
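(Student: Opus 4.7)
The plan is to combine Theorem~\ref{theo: main intro} with a non-vanishing result of \cite{BK} to establish non-triviality of the parametrized Bauer--Furuta class of $X\to T^n$, and then to extract the inequality via a parametrized version of the $KO_{\Pin(2)}$-theoretic argument of \cite{FKM}.

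First I would assemble the family. Since $\supp f_1,\ldots,\supp f_n$ are mutually disjoint, the $f_i$ commute pairwise and each preserves $\fs$, so the multiple mapping torus construction produces a smooth fiber bundle $X\to T^n$ with a global spin structure $\fs_X$ whose monodromy around the $i$-th $S^1$-factor is $f_i$. By hypothesis $[H^+]=[\xi_n]+a\in KO(T^n)$, giving $\alpha(X,\fs_X)=[\ind D]-[\xi_n]-a\in KO_{\Pin(2)}(T^n)$.

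Next, the main case is the equality $b_+(M)=n+3$, in which the dimensional hypothesis of Theorem~\ref{theo: main intro} reads $-\sign(M)/4-1-b_+(M)+\dim B=4-1-(n+3)+n=0$. I would invoke \cite{BK} to build a model family $X_0\to T^n$ whose fiber is a spin $4$-manifold of signature $-16$ (e.g.\ $K3$, possibly stabilized by $S^2\times S^2$-summands to match $b_+$), assembled from commuting diffeomorphisms supported in disjoint balls and chosen so that $\alpha(X_0,\fs_{X_0})=\alpha(X,\fs_X)$ in $KO_{\Pin(2)}(T^n)$. Since \cite{BK} supplies $FSW^{\Z_2}(X_0,\fs_{X_0})\neq 0$, Theorem~\ref{theo: main intro} then transports this to $FSW^{\Z_2}(X,\fs_X)\neq 0$, whence the parametrized Bauer--Furuta class of $X$ is non-zero in $\Pin(2)$-equivariant $KO(T^n)$.

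Finally, to convert this non-vanishing into $a\geq 3$ (equivalently $b_+(M)\geq n+3$) for arbitrary $b_+(M)$, I would run the Furuta--Kametani--Matsue argument of \cite{FKM} fiberwise over $T^n$. Non-triviality of the parametrized Bauer--Furuta stable map between the Thom spectra of $\ind D$ and $H^+$, combined with the fact that $\xi_n$ carries the sign $\Pin(2)$-action and has a non-zero top $KO$-theoretic Euler class in $KO^{-n}_{\Pin(2)}(T^n)$, yields an Adams-type obstruction on $[H^+]-[\ind D]$: its trivial real summand must exceed by at least three the virtual quaternionic rank of $\ind D$, which for $\sign(M)=-16$ is one. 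This gives $a\geq 3$. The case $b_+(M)<n+3$ is handled by stabilizing $M$ by $S^2\times S^2$-summands (extending each $f_i$ trivially there) until the boundary hypothesis is met; the resulting $KO$-theoretic inequality, together with rigidity, produces the contradiction.

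The main obstacle will be the parametrized $KO_{\Pin(2)}$-bookkeeping on $T^n$: producing a \cite{BK} model family whose $\alpha$-invariant exactly matches that of $X$ so Theorem~\ref{theo: main intro} is applicable, and adapting the representation-theoretic inequality of \cite{FKM} to the bundle setting so that the $\xi_n$-summand supplies precisely the extra $n$ units beyond Furuta's classical $10/8$-bound.
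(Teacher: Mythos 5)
Your overall strategy---rigidity, the non-vanishing result of \cite{BK}, and an argument in the style of \cite{FKM}---is indeed the paper's, but two essential steps are missing, and the second is where the content of the inequality actually lives. First, to compare $\alpha(X,\fs_X)$ with the $\alpha$-invariant of a model family from \cite{BK} you must identify $[\ind D]$: the model families have $[\ind D_0]=[\underline{\HH}]$, so you need $[\ind D]=[\underline{\HH}]$ for $X$ as well, and you cannot simply ``choose'' $X_0$ to match an unknown $\alpha(X)$. You flag this as ``the main obstacle'' but do not resolve it. The paper resolves it in Lemma~\ref{Lemma: get rid of assum on ind}: since the supports of the $f_i$ are mutually disjoint, the family of Dirac operators decomposes by excision into a sum of pullbacks of families over circles, $KSp(S^1)\cong KSp(pt)=\Z$ forces each summand (hence $[\ind D]$ itself, up to sign) to be a trivial quaternionic bundle, and the index theorem fixes its rank.

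Second, when $a<3$ the formal dimension is $d=3-a\neq 0$, so $FSW^{\Z_2}(X,\fs_X)$ is not defined and ``transporting non-vanishing to $X$'' does not typecheck; the argument must be run on finite-dimensional approximations rather than on invariants. Your idea of stabilizing by $S^2\times S^2$-summands is morally the right move (it amounts to post-composing with the inclusion $S^V\hookrightarrow S^{V\oplus\tilde{\R}^{3-a}}$), but the contradiction does not come from an unspecified ``Adams-type obstruction'' or a $KO$-theoretic Euler class, which you assert without deriving. The mechanism in the proof of Theorem~\ref{thm:cor2} is a commutative square: stabilizing first as a $\Pin(2)$-map and then forgetting to $S^1$ gives mod $2$ degree $1$, because after stabilization $\alpha$ agrees with that of the model family and Lemma~\ref{lem:forget} (the forgetful map on difference obstructions is multiplication by $2$) together with Proposition~\ref{prop:M0} pins the degree down; forgetting to $S^1$ first and then stabilizing gives degree $0$, because the added summands are trivial $S^1$-representations and $S^V$ is $S^1$-equivariantly contractible in $S^{V\oplus\R^{3-a}}$. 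That the same composite computes to both $1$ and $0$ is the contradiction; nothing playing the role of this null-homotopy appears in your sketch, so as written the proof does not close.
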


Let us give two remarks on Theorem~\ref{theo: cor2 revised intro}.

\begin{Remark}
Denote by $K3$ the underlying smooth 4-manifold of a $K3$ surface.
Recall that $K3$ admits no diffeomorphisms reversing orientation of $H^{+}(K3)$, which was shown first by Donaldson~\cite{MR1066174}, and later proven also using Seiberg--Witten invariant (for example, see the proof of \cite[Theorem~3.3.28]{MR1787219}).
This fact follows also from the case that $n=1$ and $M=K3$ in Theorem~\ref{theo: cor2 revised intro}, and therefore Theorem~\ref{theo: cor2 revised intro} can be regarded as a generalization of this fact.
\end{Remark}

\begin{Remark}
One may check that the inequality \eqref{eq: intro 10/8 diffeoms} is sharp as follows.
Let us consider the $4$-manifold $M = K3\#n S^{2} \times S^{2}$.
Let $f_{1}, \ldots, f_{n}$ be copies on $n S^{2} \times S^{2}$ of an orientation-preserving diffeomorphism on $S^{2} \times S^{2}$ which reverses orientation of $H^{+}(S^{2} \times S^{2})$ and has a fixed disk.
Then $f_{1}, \ldots, f_{n}$ have mutually disjoint supports, and each of them reverses orientation of $H^{+}(M)$.
Moreover,  the bundle $H^{+} \to T^{n}$ associated to the multiple mapping torus of $f_{1}, \ldots, f_{n}$ is isomorphic to $\xi_{n} \oplus \underline{\R}^3$.
Therefore $M$ and $f_{1}, \ldots, f_{n}$ satisfy the all assumptions in Theorem~\ref{theo: cor2 revised intro}.
Since $b_{+}(M)=n+3$, this example ensures that the inequality \eqref{eq: intro 10/8 diffeoms} is sharp.

Theorem~\ref{theo: cor2 revised intro} claims that, even when $H^+$ for given $f_{1}, \ldots, f_{n}$ is just {\it stably} equivalent to the above example, still one cannot eliminate the part corresponding to ``$nS^{2} \times S^{2}$''.
\end{Remark}

%Let us restate a question whether
%a given topological action of a group $G$ on a topological (but smoothable) $4$-manifold $M$ is non-smoothable
%in terms of algebraic topology. It asks
%whether
% there exists a smooth structure on $M$ such that the homomorphism
%  $\rho : G \to \Homeo(M)$ into the homoemorphism group 
 %  corresponding to the group action factors through the diffeomorphism group $\Diff(M)$ with respect to the smooth structure via the inclusion $\Diff(M) \hookrightarrow \Homeo(M)$:
%\begin{align*}
%\xymatrix{
%     & \Diff(M)\ar@{^{(}->}[d]\\
%    G \ar@{-->}[ru] \ar[r]_-{\rho}  &  \Homeo(M).
%    }
%\end{align*}

As applications of Theorem~\ref{theo: cor2 revised intro} and its generalization Theorem~\ref{thm:cor2}, we shall present two non-smoothability results: non-smoothable families and non-smoothable actions.
First we describe a background of our study of non-smoothable families.
One of motivations of this study is comparison between the diffemorphism and homeomorphism groups of a given manifold.
For lower-dimensional manifolds, it is known that there is no essential difference between these two groups from a homotopical point of view:
for an arbitrary orientable closed manifold $M$ of dimension $\leq 3$, the inclusion 
\begin{align}
\Diff(M) \hookrightarrow \Homeo(M)
\label{eq: Diff to Homeo}
\end{align}
is known to be a weak homotopy equivalence, where
$\operatorname{Homeo}(M)$ and $\operatorname{Diff}(M)$ are equipped with $C^{0}$-Whitney topology and the $C^{\infty}$-Whitney topology, respectively.
(One can check this fact directly in both cases of dimension $1$ and of dimension $2$ with  genus less than $2$.
The case of dimension $2$ and genus greater than $1$ can be reduced to a consideration about the mapping class groups (see, for example, \cite{MR2850125}).
For the $3$-dimensional case, see \cite{MR562642}.)
The dimension $4$ is the smallest dimension in which the inclusion \eqref{eq: Diff to Homeo} may not be a weak homotopy equivalence.

Here we summarize known results in dimension $4$.
First, Donaldson's result on his polynomial invariant (\S~V\hspace{-.1em}I~(i) of \cite{MR1066174}) and Quinn's result (1.1~Theorem in \cite{MR868975}) imply that the natural map $\pi_{0}(\Diff(K3)) \to \pi_{0}(\Homeo(K3))$ is not a surjection.
This follows also from a property of the Seiberg--Witten invariant (see \cite[Theorem~3.3.28]{MR1787219}, for example).
Using Morgan and Szab\'{o}'s rigidity result on the Seiberg--Witten invariant \cite{MS}, one may also show that $\pi_{0}(\Diff(M)) \to \pi_{0}(\Homeo(M))$ is not a surjection also for a homotopy $K3$ surface $M$.
Ruberman \cite{MR1734421} gave the first example of $4$-manifolds $M$ for which $\pi_{0}(\Diff(M)) \to \pi_{0}(\Homeo(M))$ are not injections.
Ruberman's work is based on $1$-parameter families of Yang--Mills anti-self-dual equations, and this is the first striking application of gauge theory for families.
Later, Baraglia and the second author \cite{BK} generalized Ruberman's result using $1$-parameter families of Seiberg--Witten equations, and it was confirmed that $\pi_{0}(\Diff(M)) \to \pi_{0}(\Homeo(M))$ is not an injection for $M=n(K3 \# S^{2} \times S^{2})$ with $n \geq 2$ or $M=2n\CP^{2} \# m(-\CP^{2})$ with $n \geq 2, m \geq 10n+1$.
As a totally different approach, Watanabe~\cite{Watanabe} showed that $\pi_{1}(\Diff(S^{4})) \to \pi_{1}(\Homeo(S^{4}))$ is not an injection using Kontsevich's characteristic classes for sphere bundles.

In this paper, we propose a new approach to the comparison problem between $\Diff(M)$ and $\Homeo(M)$ in dimension $4$.
Our strategy is that, developing gauge theory for families, we shall obtain a constraint on a smooth fiber bundle of a $4$-manifold, and detect a {\it non-smoothable topological families} of smooth $4$-manifolds.
The existence of such a family implies that $\Diff(M) \hookrightarrow \Homeo(M)$ is not a weak homotopy equivalence for the fiber $M$.
Here let us clarify the meaning of ``non-smoothable'' topological families.
Let $M$ be an oriented topological manifold admitting a smooth structure, 
$B$ be a smooth manifold and $M \to X \to B$ be a fiber bundle whose structure group is in $\operatorname{Homeo}(M)$.
We say that the bundle $X$ is {\it non-smoothable as a family} or $X$ has {\it no smooth reduction} if for any smooth structure on $M$ there is no reduction of the structure group of $X$ to $\operatorname{Diff}(M)$ via the inclusion $\operatorname{Diff}(M) \hookrightarrow \operatorname{Homeo}(M)$.
Namely, we say that $X$ is non-smoothable as a family
if there is no lift of the classifying map $\varphi : B \to B\Homeo(M)$ of $X$ to $B \Diff(M)$ along the natural map $B \Diff(M) \to B \Homeo(M)$
with respect to any smooth structure on $M$:
\begin{align*}
\xymatrix{
     & B\Diff(M)\ar[d]\\
    B \ar@{-->}[ru] \ar[r]_-{\varphi}  &  B\Homeo(M).
    }
\end{align*}

%Note that non-smoothable families are harder to detect than non-smoothable actions.
%For a topological group action $\rho : G \to \Homeo(M)$, if one can show that $\varphi := B\rho : BG \to B\Homeo(M)$ is non-smoothable as a family, then we can deduce that $\rho$ is non-smoothable as a group action, but there is no backward way in general.
%This is clear if we consider just a single homeomorphism $f$ and its mapping torus over $S^{1}$.
%The $\Z$-action by $f$ is a non-smoothable group action if and only if $f$ is not a diffeomorphism for any smooth structure, and the mapping torus by $f$ is a non-smoothable family if and only if
%$f$ is not isotopic to a diffeomorphism for any smooth structure.

Now we can describe our non-smoothability results.
Let $-E_{8}$ denote the (unique) closed simply connected oriented topological 4-manifold whose intersection form is the negative definite $E_{8}$-lattice.
For a subset $I=\{i_{1},\ldots,i_{k}\} \subset \{1,2,\ldots,m\}$ with cardinality $k$,
denote by 
$
T^{k}_{I}
$
the $k$-torus embedded in the $m$-torus $T^{m}$ defined as the product of the $i_{1}, \ldots, i_{k}$-th $S^{1}$-components.
The following theorem claims that there exist non-smoothable families over the torus $T^{n}$ for $n \in \{1, \ldots,4\}$ whose fibers are the topological (but smoothable) $4$-manifolds $2(-E_{8}) \# m S^{2}\times S^{2}$ with $m = n+2$.
Moreover, we shall ensure that the total spaces of the families are smoothable as manifolds.

\begin{Theorem}[(Theorem~\ref{theo: application to nonsmoothable family})]
\label{theo: application to nonsmoothable family intro}
	Let $3\leq m \leq 6$.
	Let $M$ be the topological (but smoothable) $4$-manifold defined by
	\[
	M = 2(-E_{8}) \# m S^{2}\times S^{2}.
	\]
	Then there exists a $\operatorname{Homeo}(M)$-bundle \[
	M \to X \to T^{m}
\]
over the $m$-torus satisfying the following properties: 
	let $I=\{i_{1},\ldots,i_{k}\}$ be an arbitrary subset of $\{1,2,\ldots,m\}$ with cardinality $k$.
	\begin{itemize}
		\item[$\bullet$] The total space $X$ admits a smooth manifold structure.
		\item[$\bullet$] If $k\leq m-3$,  
		the restricted family
		\[
		X|_{T^{k}_{I}} \to T^{k}_{I}
		\]
		admits a reduction to $\Diff(M)$ for some smooth structure on $M$.
		\item[$\bullet$] If $m-2\leq k\leq m$, 
		the restricted family
		\[
		X|_{T^{k}_{I}} \to T^{k}_{I}
		\]
		has no reduction to $\Diff(M)$ for any smooth structure on $M$.
	\end{itemize}
\end{Theorem}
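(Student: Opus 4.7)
The plan is to construct $X \to T^{m}$ as the multiple mapping torus of $m$ pairwise commuting self-homeomorphisms $g_{1},\dots,g_{m}$ of $M$. Topologically $M \cong 2(-E_{8}) \# m(S^{2}\times S^{2})$, so $M$ admits $m$ mutually disjoint embedded copies of $(S^{2}\times S^{2}) \setminus D^{4}$; let $g_{i}$ be the extension by the identity of a smooth orientation-preserving involution of $S^{2} \times S^{2}$ with fixed disk that reverses $H^{+}(S^{2}\times S^{2})$, supported in the $i$-th summand. Each $g_{i}$ preserves the unique spin structure of $M$, has support disjoint from the others, and induces a reflection of the $i$-th hyperbolic plane in the intersection form. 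Since the $g_{i}$ commute by disjointness of supports, their multiple mapping torus defines a $\Homeo(M)$-bundle $X \to T^{m}$ whose associated $H^{+}$-bundle is isomorphic to $\xi_{m}$. Smoothability of the total space $X$ then follows from smoothing theory: each $g_{i}$ is compactly supported in a region diffeomorphic to the smooth manifold $(S^{2}\times S^{2}) \setminus D^{4}$ and the fiber $M$ is smoothable, so the Kirby--Siebenmann obstruction of $X$ vanishes.

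For $k \leq m-3$, to realize $X|_{T^{k}_{I}}$ by a smooth family I would use the smooth splitting $M \cong K3 \# (m-3)(S^{2}\times S^{2})$. By the sharpness example following Theorem~\ref{theo: cor2 revised intro}, each smoothly split $(S^{2}\times S^{2})$-summand carries a smooth self-diffeomorphism reversing $H^{+}$ on that summand and equal to the identity near an embedded disk. Choosing $k$ of these summands yields $k$ commuting smooth diffeomorphisms with disjoint supports, whose multiple mapping torus is a $\Diff(M)$-bundle over $T^{k}_{I}$. To finish one identifies this $\Diff(M)$-bundle with $X|_{T^{k}_{I}}$ as a $\Homeo(M)$-bundle, which reduces to showing that the topological flip of one topological $(S^{2}\times S^{2})$-summand and the smooth flip of one smooth $(S^{2}\times S^{2})$-summand lie in the same path component of $\Homeo(M)$, by Quinn's topological isotopy theorem and Freedman's uniqueness of topological structures on $S^{2}\times S^{2}$.

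For $k \geq m-2$, if a smooth reduction of $X|_{T^{k}_{I}}$ existed, its associated $H^{+}$-bundle would coincide with the restriction $\xi_{m}|_{T^{k}_{I}} \cong \xi_{k}\oplus \underline{\R}^{m-k}$, i.e.\ of the form $\xi_{k} \oplus \underline{\R}^{a}$ with $a = m-k \in \{0,1,2\}$. Since $\sign(M)=-16$ and $b_{1}(M)=0$, the generalization Theorem~\ref{thm:cor2} of Theorem~\ref{theo: cor2 revised intro}---which does not require the smooth family to be a multiple mapping torus of commuting diffeomorphisms---yields $b_{+}(M) \geq k+3$. But $b_{+}(M) = m \leq k+2$, a contradiction.

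The main obstacle will be the identification step in the positive case: verifying that the topologically constructed bundle $X|_{T^{k}_{I}}$ and the smooth multiple mapping torus built from the $K3 \#(m-3)(S^{2}\times S^{2})$ decomposition coincide as $\Homeo(M)$-bundles over $T^{k}_{I}$. This comes down to a statement about $\pi_{0}(\Homeo(M))$ and its action on $H^{2}(M;\Z)$, where one must carefully relate the topological and smooth mapping class groups at the level of path components.
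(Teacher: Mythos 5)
Your construction of $X$ is the same as the paper's (multiple mapping torus of commuting homeomorphisms, each a copy of the $H^{+}$-reversing diffeomorphism $\varrho$ supported in one $S^{2}\times S^{2}$ summand), and you correctly identify Theorem~\ref{thm:cor2} as the engine for the negative direction. But there is a decisive gap there: Theorem~\ref{thm:cor2} has the hypothesis $[\ind D]=[\underline{\HH}]$ in $KO_{\Pin(2)}(T^{k})$, and you never verify it. A hypothetical smooth reduction of $X|_{T^{k}_{I}}$ only makes the monodromies homeomorphisms isotopic to diffeomorphisms, not diffeomorphisms with disjoint supports, so the excision argument of Lemma~\ref{Lemma: get rid of assum on ind} is unavailable (the paper flags exactly this in a remark). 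The paper instead proves triviality of $\ind D$ via $KSp(T^{k})\cong\Z$ for $k\leq 3$ and, for $k=4$, via a Novikov-type argument: topological invariance of rational Pontrjagin classes applied to the fiberwise tangent microbundle, injectivity of $c\colon KSp(T^{4})\to K(T^{4})$, and the family index theorem. This step is also the sole source of the restriction $3\leq m\leq 6$ (equivalently $k\leq 4$), which your proposal does not explain; that omission is a strong sign the index-bundle hypothesis has been skipped. One also needs the global (topological) spin structure on $X$, supplied by the topological version of the disjoint-supports lemma, to even speak of $\ind D$.

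Two further points. For the positive direction ($k\leq m-3$) you build a \emph{different} smooth family from the splitting $K3\#(m-3)S^{2}\times S^{2}$ and propose to identify it with $X|_{T^{k}_{I}}$; as you suspect, this identification does not reduce to $\pi_{0}(\Homeo(M))$: isotopy of the individual generators fixes the classifying map only on the $1$-skeleton of $T^{k}$, and the extension over $2$-cells (nullhomotopies of commutators) can differ by $\pi_{1}(\Homeo(M))$. The paper avoids this entirely by producing a homeomorphism $\phi\colon M\to K3\#(m-3)S^{2}\times S^{2}$ (Freedman plus the topological annulus theorem, moving the topological balls onto smooth ones) under which the \emph{same} homeomorphisms $f_{i_{1}},\ldots,f_{i_{k}}$ become diffeomorphisms; the reduction of $X|_{T^{k}_{I}}$ is then immediate. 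Finally, for smoothability of the total space, vanishing of the Kirby--Siebenmann class does not imply smoothability once $\dim X=m+4\geq 7$; the paper instead smooths the $5$-manifold $S^{1}\times 2(-E_{8})$ (where KS is the only obstruction), smooths a topological embedding $S^{1}\times\R^{4}\hookrightarrow S^{1}\times 2(-E_{8})$ up to isotopy, and glues this smooth piece to the already-smooth mapping torus over the $mS^{2}\times S^{2}$ part. Your one-line appeal to ``smoothing theory'' does not cover this.
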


Non-smoothablity as families in Theorem~\ref{theo: application to nonsmoothable family intro} is detected by Theorem~\ref{thm:cor2}, which generalizes Theorem~\ref{theo: cor2 revised intro}.
To apply this theorem, we need to calculate the Dirac index bundle.
To do this, we shall use (a variant of) the celebrated Novikov's theorem on topological invariance of rational Pontrjagin classes.
Smoothability as manifolds is verified using Kirby--Siebenmann theory.

\begin{Remark}
As noted above, for a homotopy $K3$ surface $M$, it was shown that $\pi_{0}(\Diff(M)) \to \pi_{0}(\Homeo(M))$ is not a surjection.
Let $M \to X \to S^{1}$ be the mapping torus for a representative of a non-zero topological isotopy class in the cokernel of $\pi_{0}(\Diff(M)) \to \pi_{0}(\Homeo(M))$.
Then $X$ is an example of a non-smoothable family.
Theorem~\ref{theo: application to nonsmoothable family intro} contains this simplest example $M \to X \to S^{1}$.
To our knowledge, non-smoothable familis over higher-dimensional base spaces and the problem of smoothing of the total spaces have not been discussed so far.
\end{Remark}

From the last property of $X$ in Theorem~\ref{theo: application to nonsmoothable family intro}, non-smoothability as a family,
we immediately obtain:

\begin{Corollary}
\label{cor: difference between diff and homeo intro}
For $0\leq n\leq 3 $, let $M$ be a smooth $4$-manifold which is homeomorphic to $K3\# n S^{2} \times S^{2}$.
Then the inclusion
\[
\Diff(M) \hookrightarrow \Homeo(M)
\]
is not a weak homotopy equivalence.
\end{Corollary}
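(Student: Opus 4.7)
The plan is to derive the corollary directly from Theorem~\ref{theo: application to nonsmoothable family intro}, applied with $m = n+3$, combined with an obstruction-theoretic argument at the level of classifying spaces.

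First, I would identify the topological manifold underlying $M$ with the model appearing in Theorem~\ref{theo: application to nonsmoothable family intro}. For $0 \leq n \leq 3$, the intersection form of $K3 \# n\,S^{2}\times S^{2}$ is $2(-E_{8}) \oplus (n+3) H$, which equals the intersection form of $2(-E_{8}) \# (n+3) S^{2}\times S^{2}$; since both manifolds are simply connected and have even intersection form, Freedman's classification identifies them as topological $4$-manifolds. Setting $m := n+3$, so that $3 \leq m \leq 6$, Theorem~\ref{theo: application to nonsmoothable family intro} produces a $\Homeo(M)$-bundle $X \to T^{m}$ whose total family (i.e.\ taking $k = m$ and $I = \{1,\ldots,m\}$, which falls in the allowed range $m-2 \leq k \leq m$) admits no reduction to $\Diff(M)$ for \textbf{any} smooth structure on $M$.

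Next, I would argue that such a bundle obstructs $\Diff(M) \hookrightarrow \Homeo(M)$ from being a weak homotopy equivalence. Fix the given smooth structure on $M$ and suppose, toward a contradiction, that this inclusion is a weak equivalence. Using a functorial CW model for classifying spaces (for instance, Milnor's join construction), the induced map $B\Diff(M) \to B\Homeo(M)$ is then a weak equivalence between spaces having the homotopy type of CW complexes, and hence a genuine homotopy equivalence. Since $T^{m}$ is a CW complex, the classifying map $\varphi \colon T^{m} \to B\Homeo(M)$ of $X$ can be lifted up to homotopy to a map $\widetilde{\varphi} \colon T^{m} \to B\Diff(M)$, producing a reduction of the structure group of $X$ to $\Diff(M)$ with respect to the fixed smooth structure. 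This contradicts the last bullet point of Theorem~\ref{theo: application to nonsmoothable family intro}.

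I do not expect any serious obstacle at this stage: all of the analytic and topological difficulty has been encapsulated in Theorem~\ref{theo: application to nonsmoothable family intro}, which itself rests on the rigidity result Theorem~\ref{theo: main intro} and the family $10/8$-inequality Theorem~\ref{theo: cor2 revised intro}. The corollary is a formal consequence of the existence of a non-smoothable $\Homeo(M)$-bundle over the CW base $T^{m}$.
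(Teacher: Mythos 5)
Your proposal is correct and is essentially the paper's own argument: the paper deduces this corollary immediately from the last bullet of Theorem~\ref{theo: application to nonsmoothable family intro} with $m=n+3$, using exactly the observation that a non-smoothable $\Homeo(M)$-bundle over the CW base $T^{m}$ obstructs $B\Diff(M)\to B\Homeo(M)$, and hence $\Diff(M)\hookrightarrow\Homeo(M)$, from being a weak homotopy equivalence. The only point worth polishing is that the identification of $K3\# n S^{2}\times S^{2}$ with $2(-E_{8})\# (n+3) S^{2}\times S^{2}$ via Freedman also uses vanishing of the Kirby--Siebenmann invariant (both are smoothable), not just the isomorphism of even intersection forms.
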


Here are three remarks on Corollary~\ref{cor: difference between diff and homeo intro}.

\begin{Remark}
The result in the case that $n=0$ of Corollary~\ref{cor: difference between diff and homeo intro} follows also from the combination of Morgan--Szab\'{o}~\cite{MS} and Quinn~\cite{MR868975},
however, to our best knowledge, the result in the case that $n >0$ are new even when $M$ is diffeomorphic to $K3\# n S^{2} \times S^{2}$.
To see that the case that $n=0$ follows from \cite{MS} and \cite{MR868975}, consider the unique spin structure on a smooth $4$-manifold homeomorphic to $K3$.
This $4$-manifold has non-zero Seiberg--Witten invariant for the spin structure by \cite{MS}, and from this we can deduce that there does not exist an orientation-preserving diffeomorphism which reverses the orientation of $H^{+}$.
\end{Remark}

\begin{Remark}
We note that there exist exotic $K3\#nS^{2} \times S^{2}$ not only for $n=0$.
To see the existence of exotic $K3\#nS^{2} \times S^{2}$ for some positive $n$, we may use a result of Park--Szab\'{o}~\cite{MR1653371}.
By Theorem~1.1 or Proposition~3.2 of \cite{MR1653371},
we may ensure that there exist exotic $K3\#2k(S^{2} \times S^{2})$ for all $k>0$.
\end{Remark}

\begin{Remark}
By results of Wall~\cite{Wall} and Quinn~\cite{MR868975}, any algebraic automorphism of the intersection form of $K3\# nS^{2} \times S^{2}$ is realized both by a homeomorphism and a diffeomorphism for $n \geq 1$.
Therefore we cannot find any difference between $\Diff(K3\# n S^{2} \times S^{2})$ and $\Homeo(K3\# n S^{2} \times S^{2})$ only using realizability of an automorphism on the intersection form.
\end{Remark}

Furthermore, combining Theorem~\ref{theo: application to nonsmoothable family intro} with an observation relating to results of Wall~\cite{Wall} and Quinn~\cite{MR868975} (Proposition~\ref{prop: application to nonsmoothable family 2}),
we can also obtain information about a sort of quotient of $\Homeo(M)$ divided by $\Diff(M)$ for $M=K3 \# S^{2} \times S^{2}$.
To be precise, since $\Diff(M)$ is not closed in $\Homeo(M)$ with respect to a natural topology such as the $C^{0}$-topology, we consider the {\it homotopy quotient} 
\[
\Homeo(M) \sslash \Diff(M) := \left(E\Diff(M) \times \Homeo(M)\right)/\Diff(M).
\]

\begin{Theorem}[(Corollary~\ref{cor: the fun group of quotient})]
Let $M=K3\# S^{2} \times S^{2}$.
Then we have 
\[
\pi_{1}(\Homeo(M) \sslash \Diff(M)) \neq 0.
\]
\end{Theorem}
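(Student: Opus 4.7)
The plan is to exploit the fibration sequence
\[
\Homeo(M) \sslash \Diff(M) \to B\Diff(M) \to B\Homeo(M),
\]
in which the fiber is precisely the homotopy quotient of interest, and to combine the non-smoothable family of Theorem~\ref{theo: application to nonsmoothable family intro} with Proposition~\ref{prop: application to nonsmoothable family 2} so that the obstruction to lifting its classifying map lands in $\pi_{1}$ of this fiber rather than in $\pi_{0}$.

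Since $K3$ is homeomorphic to $2(-E_{8}) \# 3(S^{2} \times S^{2})$, the manifold $M = K3 \# S^{2} \times S^{2}$ is homeomorphic to $2(-E_{8}) \# 4(S^{2} \times S^{2})$. Applying Theorem~\ref{theo: application to nonsmoothable family intro} with $m = 4$ yields a $\Homeo(M)$-bundle $X \to T^{4}$ whose restriction to $T^{2}_{\{1,2\}} \subset T^{4}$ is non-smoothable as a family for any smooth structure on $M$, while each restriction to a coordinate circle $T^{1}_{\{i\}} \subset T^{2}_{\{1,2\}}$ admits a smooth reduction, a priori for some smooth structure on $M$. Proposition~\ref{prop: application to nonsmoothable family 2}, whose proof rests on the Wall--Quinn realizability of algebraic automorphisms of the intersection form both topologically and smoothly, is then used to upgrade these reductions so that they can be taken relative to a single fixed smooth structure on $M$. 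This produces a lift, along $B\Diff(M) \to B\Homeo(M)$, of the restriction of the classifying map $\varphi : T^{2}_{\{1,2\}} \to B\Homeo(M)$ of $X|_{T^{2}_{\{1,2\}}}$ to the $1$-skeleton $T^{2}_{(1)} = T^{1}_{\{1\}} \cup T^{1}_{\{2\}}$.

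The remainder is cellular obstruction theory. Given such a lift of $\varphi|_{T^{2}_{(1)}}$, the obstruction to extending it over the unique $2$-cell of $T^{2}_{\{1,2\}}$ lies in
\[
H^{2}(T^{2}_{\{1,2\}},\, T^{2}_{(1)};\; \pi_{1}(\Homeo(M) \sslash \Diff(M))) \cong \pi_{1}(\Homeo(M) \sslash \Diff(M)).
\]
Since $X|_{T^{2}_{\{1,2\}}}$ admits no smooth reduction, $\varphi$ cannot be lifted globally, so this obstruction class is non-zero, forcing $\pi_{1}(\Homeo(M) \sslash \Diff(M)) \neq 0$.

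The main obstacle is the matching step: the smooth reductions on the two circles $T^{1}_{\{1\}}$ and $T^{1}_{\{2\}}$ furnished by Theorem~\ref{theo: application to nonsmoothable family intro} are only asserted to exist for possibly different smooth structures on $M$, and without reconciling them one would only detect non-triviality of $\pi_{0}$ of the homotopy fiber. Overcoming this is precisely the role of Proposition~\ref{prop: application to nonsmoothable family 2}; once the matching is in place, the obstruction-theoretic conclusion on the single $2$-cell is immediate.
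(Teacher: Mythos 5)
Your proposal is correct and follows essentially the same route as the paper: the paper's own (very terse) proof likewise combines the $m=4$ case of the non-smoothable-family theorem with Proposition~\ref{prop: application to nonsmoothable family 2} and the identification of $\Homeo(M) \sslash \Diff(M)$ with the homotopy fiber of $B\Diff(M) \to B\Homeo(M)$. You have simply made explicit the obstruction-theoretic step --- lifting over the $1$-skeleton of $T^{2}_{\{1,2\}}$ relative to a single fixed smooth structure and reading off the non-vanishing obstruction over the unique $2$-cell in $\pi_{1}$ of the fiber --- which the paper leaves implicit.
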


As another application,
on the topological $4$-manifold $2(-E_{8}) \# m S^{2}\times S^{2}$ with $m \geq 3$,
we shall construct non-smoothable $\Z^{m-2}$-actions.
Note that the $4$-manifold $2(-E_{8}) \# m S^{2}\times S^{2}$ is homeomorphic to $K3\#(m-3)S^{2} \times S^{2}$
and hence  admits a smooth structure.

\begin{Theorem}[See Theorem~\ref{theo: application to nonsmoothable action}]
\label{theo: application to nonsmoothable action intro}
Let $m \geq 3$.
The topological  (but smoothable) $4$-manifold $M$ defined by
\[
M = 2(-E_{8}) \# m S^{2}\times S^{2}
\]
admits commuting self-homeomorphisms $f_{1}, \ldots, f_{m}$ satisfying the following properties:
let $I=\{i_{1},\ldots,i_{k}\}$ be an arbitrary subset of $\{1,2,\ldots,m\}$ with cardinality $k$.
\begin{itemize}
\item[$\bullet$] If $k\leq m-3$, then there exists a smooth structure on $M$ such that $f_{i_{1}}, \ldots, f_{i_{k}}$ are diffeomorphisms with respect to the smooth structure.
\item[$\bullet$] If $k\geq m-2$, then 
there exists no smooth structure on $M$ such that all 
$f_{i_{1}}, \ldots, f_{i_{k}}$ are diffeomorphisms with respect to the smooth structure.
\end{itemize}
\end{Theorem}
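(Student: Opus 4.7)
The plan is to construct the homeomorphisms $f_{i}$ as independent copies on the $m$ distinct topological $S^{2}\times S^{2}$ summands of a fixed building block, and then deduce the two bullets from elementary smoothing theory and from Theorem~\ref{theo: cor2 revised intro}. Let $g$ denote the orientation-preserving self-diffeomorphism of $S^{2}\times S^{2}$ used in the Remark after Theorem~\ref{theo: cor2 revised intro}: $g$ reverses the orientation of $H^{+}(S^{2}\times S^{2})$ and fixes a disk pointwise. Write $M = 2(-E_{8})\,\#\,m(S^{2}\times S^{2})$ as a topological connected sum, and let $f_{i}$ be the self-homeomorphism of $M$ that acts as $g$ on the $i$-th copy of $S^{2}\times S^{2}$ and as the identity elsewhere. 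The supports $\supp f_{1},\ldots,\supp f_{m}$ are pairwise disjoint, so the $f_{i}$ commute and generate a $\Z^{m}$-action by homeomorphisms.

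For the first bullet, assume $k \leq m-3$ and fix $I=\{i_{1},\ldots,i_{k}\}$. Let $N_{0}$ be the topological $4$-manifold obtained from $M$ by isolating the $k$ summands indexed by $I$, so that $N_{0} \cong 2(-E_{8})\,\#\,(m-k)(S^{2}\times S^{2})$ and $M \cong N_{0}\,\#\,k(S^{2}\times S^{2})$ topologically. Since $m-k \geq 3$, the manifold $N_{0}$ is smoothable (it is homeomorphic to $K3\,\#\,(m-k-3)(S^{2}\times S^{2})$). Fix a smooth structure on $N_{0}$ and perform a smooth connected sum with $k$ standardly smooth copies of $S^{2}\times S^{2}$ inside the fixed disks of the respective $g$'s. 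This produces a smooth structure on $M$ in which each of the $k$ ``$I$-summands'' is a smoothly embedded standard $S^{2}\times S^{2}$, and with respect to which each $f_{i_{j}}$ acts as the smooth $g$ on a smooth $S^{2}\times S^{2}$ and as the identity elsewhere; hence each $f_{i_{j}}$ is a diffeomorphism.

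For the second bullet, assume $k \geq m-2$ and suppose for contradiction that some smooth structure on $M$ makes $f_{i_{1}},\ldots,f_{i_{k}}$ diffeomorphisms. Since $M$ is simply connected it admits a unique spin structure $\fs$, which is automatically preserved by each $f_{i_{j}}$, and the supports are disjoint by construction. Form the multiple mapping torus $X \to T^{k}$. On $H^{+}(M) \cong \R^{m}$, with basis indexed by the $m$ copies of $S^{2}\times S^{2}$ (the $H^{+}$ of $2(-E_{8})$ vanishes), the map $f_{i_{j}}$ acts by $-1$ on the $i_{j}$-th coordinate and by $+1$ on the rest. Hence the associated bundle splits as
\[
H^{+} \cong \pi_{1}^{*}\ell \oplus \cdots \oplus \pi_{k}^{*}\ell \oplus \underline{\R}^{\,m-k} = \xi_{k} \oplus \underline{\R}^{\,m-k}.
\]
As $\sign(M)=-16$ and $b_{1}(M)=0$, Theorem~\ref{theo: cor2 revised intro} applies with $n=k$ and forces $b_{+}(M) \geq k+3$, i.e.\ $m \geq k+3$, contradicting $k \geq m-2$.

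The main obstacle is the precise identification of the associated bundle $H^{+} \to T^{k}$ with $\xi_{k} \oplus \underline{\R}^{\,m-k}$, which ultimately reduces to the single fact that the building block $g$ acts by $-1$ on the one-dimensional $H^{+}$ of its own $S^{2}\times S^{2}$ summand; once this is in hand, the non-smoothability is an immediate consequence of Theorem~\ref{theo: cor2 revised intro}, and the positive part is routine connected-sum bookkeeping combined with the (Freedman--Donaldson--Furuta) smoothability of $2(-E_{8})\,\#\,n(S^{2}\times S^{2})$ for $n \geq 3$.
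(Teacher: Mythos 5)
Your proposal is correct and follows essentially the same route as the paper: the same homeomorphisms $f_i$ built from $\varrho$ on disjoint $S^{2}\times S^{2}$ summands, smoothability for $k\leq m-3$ via Freedman's identification of the complementary block with $K3\#(m-k-3)S^{2}\times S^{2}$, and non-smoothability for $k\geq m-2$ via the computation $H^{+}\cong\xi_{k}\oplus\underline{\R}^{m-k}$ and the $10/8$-type inequality $b_{+}(M)\geq k+3$. The only detail you compress into ``routine connected-sum bookkeeping'' is the one the paper spells out: after choosing a smooth structure on the complementary block, the topologically embedded balls along which the $I$-summands were attached must be carried onto smoothly embedded balls by an ambient homeomorphism (this is where the paper invokes the annulus theorem), so that the resulting smooth connected sum is homeomorphic to $M$ by a map conjugating each $f_{i_j}$ to a diffeomorphism.
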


Non-smoothable group actions on $4$-manifolds has been studied by many authors.
The main tool to detect them is equivariant gauge theory, but the third author of this paper found that  gauge theory for families can be also used to study non-smoothable actions in \cite{MR2644908}, and  this direction was developed by 
 Baraglia~\cite{Baraglia}.
The proof of Theorem~\ref{theo: application to nonsmoothable action intro} based on a technique different from \cite{MR2644908} and \cite{Baraglia}, and the result itself is new.
We will compare the result of Theorem~\ref{theo: application to nonsmoothable action intro} with previous research in Remark~\ref{rem: precious works on nonsmoothable actions} in detail.

As a further research direction, once one can establish a Bauer--Furuta version of the gluing result in \cite{BK}, then we may get results on non-smoothable actions and families on any  spin $4$-manifold with signature $-32$ following  the same strategy of this paper.
However, in fact, we are also considering to develop a way to deal with more general signature and $b_{+}$.

A brief outline of the contents of this paper is as follows.
In Section~\ref{section: Monopole map}, we shall recall some materials of the families Seiberg--Witten invariant.
In Section~\ref{section: Spin families}, we shall discuss when the tangent bundle along fibers admits a fiberwise spin structure.
In Section~\ref{section: Main Theorem}, we shall prove the main results in this paper, the rigidity theorem, and its consequences, such as a $10/8$-type inequality.
In Section~\ref{section: Applications}, we shall give two applications, non-smoothable actions and families, of the results given in Section~\ref{section: Main Theorem}.
Sections~\ref{section: Calculation of the index bundle} and \ref{section: Smoothing of the total spaces} are devoted to prove some results needed to establish the applications in Section~\ref{section: Applications}.
The main tools in Sections~\ref{section: Calculation of the index bundle} and \ref{section: Smoothing of the total spaces} are Kirby--Siebenmann theory, and arguments there may be of independent interest even forgetting gauge-theoretic context.
In Sections~\ref{section: Calculation of the index bundle}, we shall calculate the Dirac index bundle.
More precisely, we shall give a few sufficient conditions for families of spin $4$-manifolds to have trivial index bundles.
In Section~\ref{section: Smoothing of the total spaces}, we shall show the smoothability as manifolds of the total spaces of the non-smoothable families given in Section~\ref{section: Applications}.

\begin{Addendum}
After the first version of this paper appeared on arXiv, David Baraglia informed the second author about a draft of his paper \cite{Baraglia2}.
Adapting the construction of examples of families in this paper for his constraints on families of $4$-manifolds, he generalizes Corollary~\ref{cor: difference between diff and homeo intro} of this paper as Theorem~1.8 and Corollary~1.9 of \cite{Baraglia2}.
We note that his way to prove these results is different from ours:
to prove Theorem~1.8 and Corollary~1.9, Baraglia used a family version of Donaldson's diagonalization theorem (corresponding to Theorems~1.1, 1.2 in \cite{Baraglia2}),
on the other hand, we use a family version of the 10/8-inequality to prove Corollary~\ref{cor: difference between diff and homeo intro}.
\end{Addendum}

\begin{acknowledgement}
The authors would like to thank Yosuke Morita for helpful conversations on Proposition~\ref{prop: application to nonsmoothable family 2}.
The authors would also appreciate giving useful comments on algebraic topology to Yuli Rudyak and Daisuke Kishimoto. 
The authors also wish to thank Mikio Furuta for pointing out a mistake in an earlier version of this preprint, and also wish Ko Ohashi and Yukio Kametani for helpful comments and discussion.
Tsuyoshi Kato was supported by JSPS Grant-in-Aid for Scientific Research (B) No.17H02841 and 
JSPS Grant-in-Aid for Scientific Research on Innovative Areas (Research in a proposed research area) No.17H06461.
Hokuto Konno was partially supported by JSPS KAKENHI Grant Numbers 16J05569, 17H06461, 19K23412 and 
the Program for Leading Graduate Schools, MEXT, Japan, and further, a part of this work was done under RIKEN Special Postdoctoral Researcher program.
Nobuhiro Nakamura was supported by JSPS Grant-in-Aid for Scientific Research (C) No.19K03506.
\end{acknowledgement}

\section{Families Seiberg--Witten invariant}
\label{section: Monopole map}

In this section, we shall recall some materials of the families Seiberg--Witten invariant.
In particular, we shall recall an interpretation of the families Seiberg--Witten invariant as a kind of mapping degree in Subsection~\ref{subsection: Families Seiberg--Witten invariants}.
%%%%%%%%%%%%
%
%
\subsection{Seiberg--Witten equations with $j$-action}\label{subsection:SWj}

First we recall some basics of the Seiberg--Witten equations on a spin $4$-manifold in the unparameterized setting.
A special feature of the Seiberg--Witten equations on a spin $4$-manifold is that the equations have an extra symmetry, written as the ``$j$-action'', compared with the Seiberg--Witten equations on a general $\Spinc$ $4$-manifold.
We refer the readers to \cite{MR1367507} for the generality of the Seiberg--Witten equations,
and \cite{MR1839478,Furuta, BF} for the monopole maps on spin $4$-manifolds.

Let $M$ be a closed Riemannian $4$-manifold with a spin structure $\fs$.
Let $S=S^+\oplus S^-$ be the spinor bundle.
Note that $S$ has a quaternionic structure, in particular, the multiplication of $j \in \mathbb{H}$ is defined.
The $j$-action is anti-linear with respect to the complex structure of $S$.
Let us abbreviate the tangent bundle $TM$ to $T$ and identify $T$ with the cotangent bundle $T^*M$ by the metric.
Let $C(T)$ be the Clifford bundle of $T$. 
As a vector bundle, the bundle $C(T)$ is identified with the bundle of exterior product, $\Lambda^*T$.
The Clifford multiplication is given by a bundle morphism
\[
\rho\colon \Lambda^*T\to\End_{\R}( S).
\]
Namely, for $v\in \Lambda^*T_x$,  $\rho(v)$ is an endomorphism of $S_x$.
Here $T_x$, $S_x$ are the fibers over $x$.
The spinor bundle has a $\Z_2$-grading $S=S^+\oplus S^-$, and we have also $\Lambda^*T=\Lambda^{\text{even}}T\oplus \Lambda^{\text{odd}}T$.
If $v$ is in $\Lambda^{\text{even}}T$, $\rho(v)$ preserves the $\Z_2$-grading of $S^+\oplus S^-$.
If $v\in \Lambda^{\text{odd}}T$, then $\rho(v)$ switches  $S^+$ and $S^-$.
Note that the Clifford multiplication $\rho(v)$ commutes with the $j$-action:
\[
\rho(v)j = j\rho(v).
\]

The complexified Clifford multiplication is also defined
\[
\rho\colon\Lambda^*T\otimes_\R\C\to\End_{\C}(S),
\] 
which {\it anti-commutes} with the $j$-action, that is, for $v\otimes c \in \Lambda^*T\otimes_\R\C$, we have
\[
\rho(v\otimes c) j = j\rho(v\otimes \bar{c}).
\]

The Levi-Civita connection on $T$ induces a spin connection $\nabla_0$ on $S$, and the spin Dirac operator 
\[
D_0\colon\Gamma(S^+)\to\Gamma(S^-)
\]
is defined by
\[
D_0=\sum_i\rho(e_i)(\nabla_0)_{e_i},
\]
where $\{e_i\}$ is a local orthonormal frame of $T$.
Then $D_0$ commutes with $j$.
Note that the spin connection $\nabla_0$ induces a trivial flat connection $A_0$ on $L_0=\det S^+ \cong M\times \C$.
The $j$-action on $S^+$ induces the $j$-action on $L_0$  given by complex conjugation.
Let $A$ be a $\U(1)$-connection  on $L_0$.
If we write $A$ as $A=A_0+a$ for an imaginary-valued $1$-form $a\in i\Omega^1(M)$, then the $j$-action on $L_0$ induces the $j$-action on $\U(1)$-connections given by
\[
j\cdot A = A_0-a.
\]
For a $\U(1)$-connection $A=A_0+a$,  we have a unique $\Spinc(4)$-connection $\nabla_0 + \frac{a}2$ on $S$ which induces the Levi-Civita connection on $T$ and $A$ on $L_0$.
We have the Dirac operator associated with $\nabla_0 + \frac{a}2$  as follows:
\[
D_A\phi=D_0\phi+\frac12\rho(a)\phi.
\]
In fact, $D_A$ is a Dirac operator on the $\Spinc$-structure associated to the spin structure $\fs$. 
Note that $D_A$ is $j$-equivariant:
\[
D_{(j\cdot A)}(j\phi) = jD_A\phi.
\]

As mentioned above, $\rho(v)$ for even degree $v$ preserves the components $S^\pm$.
In particular, it can be seen that $\rho(v)$ for a self-dual $2$-form $v$ is an endomorphism of $S^+$, that is, $\rho(\Lambda_+^2T\otimes\C)\subset \End_{\C}(S^+)$.
\begin{Proposition}[({\cite[Lemma 4.1.1]{MR1367507}})]
	For $\phi\in\Gamma(S^+)$, define the traceless endomorphism $q(\phi)$ by
	\[
	q(\phi)=\phi\otimes\phi^*-\frac{|\phi|^2}{2}\id.
	\]
Then $q(\phi)$ can be identified with a section of $\Lambda_+^2(T)\otimes i\R$.
\end{Proposition}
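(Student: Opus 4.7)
The plan is to identify $q(\phi)$ via the Clifford action by establishing a pointwise isomorphism between $\Lambda^2_+ T_x \otimes i\R$ and the space of traceless Hermitian endomorphisms of $S^+_x$, and then verifying that $q(\phi)$ lies in the latter space.

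First I would check that $q(\phi)$ is pointwise a traceless Hermitian (self-adjoint) endomorphism of $S^+$. Self-adjointness is immediate since $(\phi \otimes \phi^*)^* = \phi \otimes \phi^*$ and $(\tfrac{|\phi|^2}{2} \id)^* = \tfrac{|\phi|^2}{2} \id$. For tracelessness, one uses $\tr(\phi \otimes \phi^*) = |\phi|^2$ together with $\dim_\C S^+_x = 2$, so that $\tr(\tfrac{|\phi|^2}{2} \id) = |\phi|^2$ as well.

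Next I would analyze $\rho$ on self-dual $2$-forms. Using the Clifford relations and the fact that self-dual $2$-forms preserve the splitting $S = S^+ \oplus S^-$, one shows $\rho(v)$ is traceless on $S^+$ for $v \in \Lambda^2 T$, and that for a real self-dual $2$-form $v$ the endomorphism $\rho(v)|_{S^+}$ is skew-Hermitian. Consequently $\rho(iv) = i\rho(v)$ is Hermitian traceless, which means
\[
\rho \colon \Lambda^2_+ T_x \otimes i\R \longrightarrow \End_\C(S^+_x)
\]
lands in the subspace of traceless Hermitian endomorphisms. Injectivity follows from the Clifford relations (a nonzero self-dual form cannot annihilate all of $S^+$). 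Since both spaces have real dimension $3$ — the left-hand side because $\Lambda^2_+ T_x$ is $3$-dimensional, the right-hand side because traceless Hermitian $2\times 2$ matrices form a $3$-dimensional real space — the map is an isomorphism of real vector spaces.

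Finally, combining the two steps, the traceless Hermitian endomorphism $q(\phi)_x \in \End_\C(S^+_x)$ has a unique preimage $\omega(\phi)_x \in \Lambda^2_+ T_x \otimes i\R$. Assembling these pointwise preimages yields a smooth section of $\Lambda^2_+ T \otimes i\R$, and this is the claimed identification. The only real subtlety is checking the skew-Hermiticity of $\rho(v)|_{S^+}$ for real self-dual $v$; this is the main technical point, but it is a routine Clifford-algebra computation (for instance, by picking an oriented orthonormal frame, writing a standard basis $e_1\wedge e_2 + e_3 \wedge e_4$, etc., of $\Lambda^2_+T_x$ and observing that the corresponding Clifford products satisfy the quaternionic relations on $S^+_x$), so I would simply reduce to that computation and leave the explicit matrix form as a standard reference to \cite{MR1367507}.
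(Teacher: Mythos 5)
The paper gives no proof of this proposition; it simply cites Morgan's Lemma 4.1.1, and your argument is precisely the standard proof of that lemma. It is correct: $q(\phi)$ is traceless Hermitian, $\rho$ carries $\Lambda^2_+T_x\otimes i\R$ injectively into the traceless Hermitian endomorphisms of $S^+_x$ (since $\rho$ of a real $2$-form is skew-adjoint and traceless), and a dimension count ($3=3$) makes this an isomorphism, so the identification follows.
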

Now we can write down the Seiberg--Witten equations:
\begin{equation}\label{eq:SW}
\left\{
\begin{aligned}
D_{A}\phi=&0,\\
F_{A}^+ =&q(\phi),
\end{aligned}\right.
\end{equation}
where $F_A^+$ is the self-dual part of the curvature of $A$.
If we write $A=A_0+a$, the equations \eqref{eq:SW} are rewritten as  
\[\left\{
\begin{aligned}
D_{A_0}\phi+\frac12\rho(a)\phi=&0,\\
d^+a -q(\phi) =&0.
\end{aligned}\right.
\]
As we have already seen, the first equation is $j$-equivariant.
Since we have
\[
F_{jA}=F_{A_0-a}=-da,\quad q(j\phi)=-q(\phi),
\]
the second equation is also $j$-equivariant.

The gauge transformation group $\G=\Map(M,\U(1))$ acts on 
the space of $\U(1)$-connections of $L_{0}$ and positive spinors by
\[
u(A,\phi) = (A-2u^{-1}du,u\phi)
\]
for $u \in \G$.
The Seiberg--Witten equations \eqref{eq:SW} are $\G$-equivariant.
The gauge action anti-commutes with the $j$-action:
\[
u(x)j=j\overline{u(x)}\quad\text{ for }x\in M.
\]

The moduli space of solutions to the Seiberg--Witten equations is the set of gauge equivalence classes of solutions:
\[
\M=\{\text{solutions to \eqref{eq:SW}}\}/\G.
\] 
Roughly speaking, the Seiberg--Witten invariant is defined by ``counting of $\#\M$".
Furthermore, the number ``$\#\M$" can be interpreted as the ``mapping degree" of a map, called the {\it monopole map}, whose zero set is essentially $\M$.
(Precise meaning of these will be explained in Subsection~\ref{subsection: Families Seiberg--Witten invariants} in a parametrized setting.)

The monopole map is defined by
\begin{equation}\label{eq:monopolemap}
\begin{gathered}
m\colon i\Omega^1(M)\oplus \Gamma(S^+)\to i(\Omega^0_*\oplus\Omega^+)(M)\oplus \Gamma(S^-),\\
m(a,\phi)= (d^*a, d^+a - q(\phi), D_{A_0}\phi+\frac12\rho(a)\phi),
\end{gathered}
\end{equation}
where $\Omega^0_*(M)=\Ima(d^*\colon\Omega^1(M)\to\Omega^0(M))$.
The  map $m$ is decomposed into the sum $m=l+c$ of the linear map $l=(d^*,d^+, D_{A_0})$ and the quadratic part $c$ given by $c(a,\phi)=(0,-q(\phi),\frac12\rho(a)\phi)$.
For the purpose of carrying out a suitable analysis, we take the $L^2_k$-completion ($k\geq 4$) of the domain, and the $L^2_{k-1}$-completion of the target, and extend $m$ to the completed spaces.
Denote by $\mathcal{U}^\prime$ and $\mathcal{U}$
the completed domain and target respectively.
Then $m\colon \mathcal{U}^\prime\to\mathcal{U}$ is a smooth map between Hilbert spaces.
The linear part $l$ is a Fredholm map of index
\[
-\frac{\sign(M)}4 + b_1(M) -b_+(M),
\]
and $c$ is a non-linear compact map.

We take the $L^2_{k+1}$-completion of the gauge group $\G$.
Then the $\G$-action is smooth.
The space $\ker (d^*\colon i\Omega^1(M)\to i\Omega^0(M))$ is a global slice of the $\G$-action at $(0,0)$, and we have 
\[
m^{-1}(0) = \{\text{solutions to \eqref{eq:SW}}\}\cap \ker d^*.
\]
The slice $\ker d^*$ still has a remaining gauge symmetry. 
Let $\mathrm{Harm}(M,S^1)$ be the kernel of the composition of the maps
\[
L^2_{k+1}(\Map(M,S^1))\overset{d}{\to}L^2_k(i\Omega(M))\overset{d^*+d^+}{\to }L^2_{k-1}(i(\Omega^0\oplus\Omega^+)(M)).
\]
Denote by $\mathrm{Harm}(M,S^1)$ the space of harmonic maps from $M$ to $S^{1}$.
Then $m$ is $\mathrm{Harm}(M,S^1)$-equivariant, and we have
\[
\M = m^{-1}(0)/\mathrm{Harm}(M,S^1).
\]
We also have an identification
\[
\mathrm{Harm}(M,S^1) \cong S^1\times H^1(M;\Z),
\]
which is obtained by fixing a splitting of  the exact sequence
\[
1\to S^1\to \mathrm{Harm}(M,S^1)\to H^1(M;\Z)\to 0.
\]
The monopole map $m$ is also $j$-equivariant, when $j$ acts on the spaces $i\Omega^*(M)$ of imaginary-valued forms by multiplication of $-1$. 
The $j$ action anti-commutes with the $\mathrm{Harm}(M,S^1)$-action in the sense that 
\[
j(z,a)= (\bar{z}, -a)j
\] 
for $(z,a)\in S^1\times H^1(M;\Z)$. 

Set $\Pin(2) = \langle S^1, j\rangle$, the group generated by $S^{1}$ and $j$ in $\mathbb{H}$.
Assuming $b_1(M)=0$, we have $\mathrm{Harm}(M,S^1) = S^1$, and $m$ is $\Pin(2)$-equivariant. 
Since $\M=m^{-1}(0)/S^1$, the $j$-action  descends to a $\Pin(2)/S^1$-action on $\M$, where $\Pin(2)/S^1=\deux$.

\subsection{Families Seiberg--Witten invariants}
\label{subsection: Families Seiberg--Witten invariants}
%%%%%%%%%%%%

For a given family of $4$-manifolds,
one can define a family version of the Seiberg--Witten invariant by counting the numbers of the parameterized moduli space of the Seiberg--Witten equations.
This invariant can be also interpreted as the mapping degree of a finite dimensional approximation of a family of monopole maps.
In this subsection we shall recall these arguments.
See such as \cite{B,MR1868921,BF,BK2,MR2652709} for references for this subsection.

Let $M$ be a closed oriented smooth $4$-manifold with a spin structure $\fs$, $B$ be a closed smooth connected manifold, and
$\pi\colon X\to B$ be a locally trivial fiber bundle with fibers diffeomorphic to $M$.
We assume that the structure group of $\pi\colon X\to B$ is in the group of orientation-preserving diffeomorphisms of $M$.
In such a case we call $\pi\colon X\to B$ a {\it smooth family} of $M$.
Let $T(X/B)$ be the tangent bundle along the fibers and choose a metric on $T(X/B)$. 
We shall consider the situation that $T(X/B)$ admits a spin structure $\fs_X$ whose restriction on  each fiber is isomorphic to $\fs$.
We call such a spin structure $\fs_X$ a \textit{global spin structure modelled on $\fs$}.
If we start with a  $4$-manifold $M$ with $\Spinc$-structure $\fs^c$,  a \textit{global $\Spinc$ structure modelled on $\fs^c$} is similarly defined.

\begin{Remark}
As above, we say that a topological fiber bundle $X \to B$ is smooth if its structure group has been reduced to $\Diff(M)$ from $\Homeo(M)$.
On the other hand, since we assumed that $B$ is a smooth manifold, another option of the definition of a smooth fiber bundle is a stronger one.
Namely, one might assume some smoothness on the transition functions in the following sense: let $\{g_{\alpha\beta} : U_{\alpha} \cap U_{\beta} \to \Diff(M)\}$ be the transition functions of $X \to B$ for some open covering $M=\bigcup_{\alpha} U_{\alpha}$.
We say that the transition functions are smooth if the map
\[
(U_{\alpha} \cap U_{\beta}) \times M \to M
\]
given by
\[
(b,x) \mapsto g_{\alpha\beta}(b)x
\]
is smooth.
If the transition functions satisfy this smoothness, then the total space $X$ is a smooth manifold and the projection $X \to B$ is a smooth map.
One might call $X \to B$ a smooth fiber bundle only in this case.

However, in fact, it is shown in \cite{MR2574141} that these two
definitions of a smooth fiber bundle are equivalent to each other:
if a topological fiber bundle $X\to B$ over a smooth manifold $B$ has a reduction to $\Diff(M)$, then, after replacing $X$ with an isomorphic bundle, we may assume that the transition functions of $X$ satisfy the smoothness in the above sense.
\end{Remark}

Assume that $b_1(M)=0$ and that a global spin structure $\fs_X$ modelled on $\fs$ is given on $\pi\colon X\to B$.
Let $m\colon\mathcal{U}^\prime\to\mathcal{U}$ be the Sobolev completed monopole map given in \subsecref{subsection:SWj}.
Recall $m$ is $\Pin(2)$-equivariant since $b_1(M)=0$. 
Once we fix a fiberwise Riemannian metric on $X$, we can obtain a family of monopole maps
\[
\tilde{\mu}\colon\mathcal{A}\to\mathcal{C}
\]
by parametrizing the argument in \subsecref{subsection:SWj} over $B$.
Here $\mathcal{A}$ and $\mathcal{C}$ are the Hilbert bundles over $B$ with fibers $\mathcal{U}^\prime$ and $\mathcal{U}$,
 and $\tilde{\mu}$ is a fiber-preserving map whose restriction on each fiber is identified with the monopole map $m$.

It is convenient to trivialize $\mathcal{C}\cong B\times\mathcal{U}$ by Kuiper--Segal's theorem \cite{MR0179792}, \cite{MR0248877}.
Define $\mu\colon\mathcal{A}\to\mathcal{U}$ by the composition of $\tilde\mu$ with the projection $\mathcal{C}\cong B\times\mathcal{U}\to\mathcal{U}$.
The map $\mu$ satisfies the following compactness property.

\begin{Proposition}[({\it cf}.~\cite{BF})]
\label{prop: properness of BF}
	The preimage $\mu^{-1}(O)$ of a bounded set $O\subset\mathcal{U}$ is contained in some bounded disk bundle.
\end{Proposition}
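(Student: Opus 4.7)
The plan is to reduce to the unparametrized Bauer--Furuta compactness theorem and then upgrade it to a uniform statement using compactness of $B$.

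First, I would recall the fiberwise a priori estimate. Fix $b \in B$ and suppose $(a,\phi) \in \mathcal{A}_b$ satisfies $\tilde{\mu}(a,\phi) = (\alpha, \beta, \gamma)$ with $(\alpha, \beta, \gamma)$ in a bounded subset of $\mathcal{C}_b$. Then $d^*a = \alpha$, $d^+a - q(\phi) = \beta$, and $D_{A_0}\phi + \tfrac{1}{2}\rho(a)\phi = \gamma$. Applying the Weitzenb\"ock formula to the Dirac equation and the maximum principle at a point where $|\phi|^2$ attains its maximum yields the pointwise estimate
\[
\tfrac{1}{8}|\phi|^4 \le |\gamma|^2 + \bigl(-\tfrac{s}{4}\bigr)|\phi|^2 + \tfrac{1}{2}|\beta|\,|\phi|^2,
\]
where $s$ is the fiberwise scalar curvature. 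This produces an $L^\infty$-bound on $\phi$ depending only on the geometry of $M_b$ and on the norms of $\beta$ and $\gamma$. Given this, the Coulomb gauge condition $d^*a=0$, the equation $d^+a = \beta + q(\phi)$, and elliptic regularity for $d^*+d^+$ (using $b_1(M)=0$ to rule out the harmonic kernel), combined with Sobolev multiplication, yield an $L^2_k$-bound on $(a,\phi)$ by a standard bootstrap. This is precisely the fiberwise Bauer--Furuta compactness.

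Second, I would verify that all constants appearing in this argument depend continuously on $b \in B$. The scalar curvature, the Sobolev embedding and multiplication constants, and the elliptic constants for $d^*+d^+$ and for the family Dirac operator all depend continuously on the fiberwise Riemannian metric. Moreover, the Kuiper--Segal trivialization $\mathcal{C} \cong B \times \mathcal{U}$ is a continuous bundle isomorphism, so a bounded subset $O \subset \mathcal{U}$ corresponds to a subset of $\mathcal{C}$ contained in a bounded disk subbundle whose fiberwise radii vary continuously in $b$. Since $B$ is compact, all of these quantities are uniformly bounded.

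Combining the two steps produces a constant $R > 0$, independent of $b$, such that $\mu^{-1}(O) \cap \mathcal{A}_b$ is contained in the $L^2_k$-ball of radius $R$ in $\mathcal{A}_b$ for every $b \in B$, which is exactly the claimed bounded disk subbundle of $\mathcal{A}$. The main technical point is verifying that every analytic constant used in the fiberwise Bauer--Furuta argument depends continuously on the fiber geometry; while this is routine rather than deep, it is the step that must be checked carefully, and once it is established the compactness of $B$ does the rest.
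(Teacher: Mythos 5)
Your overall strategy is exactly the paper's: the paper offers no details beyond the remark that the claim ``can be verified straightforwardly extending the argument in \cite[Proposition 3.1]{BF} since we have assumed that $B$ is compact,'' i.e.\ run the fiberwise Bauer--Furuta a priori estimate and use compactness of $B$ to make all constants uniform. Your second and third steps (continuity of the analytic constants in $b$, the Kuiper--Segal trivialization, and extraction of a uniform radius $R$) are the content the paper leaves implicit, and they are fine.

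One technical point in your first step does not work as written. The pointwise maximum-principle inequality
\[
\tfrac{1}{8}|\phi|^4 \le |\gamma|^2 + \bigl(-\tfrac{s}{4}\bigr)|\phi|^2 + \tfrac{1}{2}|\beta|\,|\phi|^2
\]
cannot be derived this way when $D_{A}\phi=\gamma\neq 0$: at a maximum of $|\phi|^2$ the Weitzenb\"ock formula leaves you with the term $\langle D_A^* D_A\phi,\phi\rangle=\langle D_A^*\gamma,\phi\rangle$, which is controlled by $|D_A^*\gamma|\,|\phi|$, i.e.\ by a \emph{first derivative} of $\gamma$, not by $|\gamma|^2$; and a pointwise bound on $D_A^*\gamma$ is not available merely from $\gamma$ ranging in a bounded subset of $L^2_{k-1}$ (for $k=4$ one does not even have $L^2_3\subset C^1$ in dimension $4$). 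The pointwise maximum principle is the right tool only for exact solutions ($\gamma=0$). For preimages of bounded sets under the monopole map one should instead use the \emph{integrated} Weitzenb\"ock identity, $\|D_A\phi\|_{L^2}^2=\|\nabla_A\phi\|_{L^2}^2+\int_M \tfrac{s}{4}|\phi|^2+\tfrac12\int_M\langle\rho(F_A^+)\phi,\phi\rangle$, which with $F_A^+=q(\phi)+\beta$ yields an $L^4$-bound on $\phi$ and an $L^2_1$-bound on $(a,\phi)$, after which the bootstrap you describe proceeds as before. This is precisely what \cite[Proposition 3.1]{BF} does, and with that substitution your argument goes through.
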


This can be verified straightforwardly extending the argument in \cite[Proposition 3.1]{BF} since we have assumed that $B$ is compact.
By Proposition~\ref{prop: properness of BF}, the map $\mu$ can be extended to the map
\[
\mu^+\colon T\mathcal{A}\to S^{\mathcal{U}},
\]
where $T\mathcal{A}$ is the Thom space and $S^{\mathcal{U}}$ is the one-point completion of $\mathcal{U}$ obtained by collapsing.

The family $\pi\colon X\to B$ induces a vector bundle
\[
\R^{b_{+}(M)} \to H^+ \to B
\]
whose fiber over $b\in B$ is the space $H^+(M_b)$ of harmonic self-dual $2$-forms on $M_b=\pi^{-1}(b)$.
We call $H^+$ the bundle of $H^+(M)$.
The isomorphism class of $H^{+}$ is independent of the choice of fiberwise Riemannian metric on $X$ since the Grassmannian of maximal-dimensional positive definite subspaces of $H^{2}(M;\R)$ is contractible.
We also have the index bundle $\ind D \in KO_G(B)$ of the family of  Dirac operators on the spin family $X\to B$. 
Here we assume $G=\Pin(2)$ or $G=S^1 \subset \Pin(2)$.
Let $L\colon \mathcal{A}\to\mathcal{C}$ be the family of linear parts of $\mu$, which is
 a fiberwise linear map whose  restriction on  each fiber is $l$ in \subsecref{subsection:SWj}.
In Section~\ref{section: Main Theorem}, we will use the element
 $\alpha \in KO_G(B)$ defined by
\[
\alpha:=[\ind L]=[\ind D] -[H^+] \in KO_G(B).
\]
Choose a finite dimensional trivial vector subbundle $F^\prime = \underline{V}=B\times V\subset \mathcal{C}$ so that $F^\prime$ contains the fiberwise cokernel of $L$, and let $F=L^{-1}(F^\prime)$.
Then $\alpha= [F]-[F^\prime]$ holds
and the image of $F$ under $L$ is contained in $\underline{V}$.
On the other hand, the image of $F$ under the nonlinear part $\mu-L$ is not necessarily contained in $\underline{V}$, and
we shall project the image of $\mu$ on $V$.
Let $S(V^\perp)$ be the unit sphere in the orthogonal complement $V^\perp$ of $V$.
The inclusion $S^{V}\to S^{\mathcal{U}}\setminus S(V^\perp)$ is a deformation retract.
Let $\rho_V$ be a retracting map.
A \textit{finite dimensional approximation of the family of monopole maps} is defined by
\[
f_{V}=\rho_V\circ\mu|_{TF}\colon TF\to V.
\]
By \cite{BF}, the above construction defines a well-defined class $[f_V]$ in the stable cohomotopy set
\begin{equation}\label{eq:stab}
\{T(\ind D -H^+),S^0\}^G_{\mathcal{U}} = \underset{W\subset V^\perp}{\colim}[S^W\wedge TF,S^W\wedge S^V]^G.
\end{equation}
We call the class $[f_V]$ the {\it Bauer--Furuta invariant} or the \textit{stable cohomotopy Seiberg--Witten invariant of the family $\pi\colon X\to B$}. 

In the case when $G=S^1$, we shall define the (mod $2$) degree homomorphism
\[
\deg\colon \{TF,S^V\}^{S^1}_{\mathcal{U}}\to\Z_2 %\quad [f]\mapsto \deg f\mod 2
\] 
below, provided that
\[
d:=-\dfrac{\sign(M)}{4} -1-b_+(M_i)+\dim B =0.
\]
The condition $d=0$ is equivalent to that
\[
\rank F +\dim B -\dim V =1,
\]
and therefore the preimage of $f_{V}$ is $1$-dimensional.
For a finite dimensional approximation $f_V$ with sufficient large $V$ such that \eqref{eq:stable} below holds, we let 
\[
FSW^{\Z_2}(X,\fs_X) = \deg [f_V].
\]
This definition coincides with the one in \cite[Section 2]{BK2}.

When $G=S^1$, the universe $\mathcal{U}$ consists of $\C$ on which $S^1$ acts by multiplication and the trivial real $1$-dimensional $S^1$-representation $\R$ as irreducible summands. 
In this case, the stable cohomotopy set \eqref{eq:stab} is a group.
The bundle $F$ is an $S^1$-equivariant bundle with fiber $\C^{x+2a}\oplus \R^y$ over $B$ and $V=\C^x\oplus\R^{y+b}$, where $x$, $y$ are nonnegative integers and 
\[
a = -\frac{\sign(M)}{16},\quad b=b_+(M).
\]

When $G=\Pin(2)$, the universe $\mathcal{U}$ consists of $\HH$ on which $\Pin(2)$ acts by multiplication and the real $1$-dimensional nontrivial $\Pin(2)$-representation $\tilde{\R}$ as irreducible summands.
In order to distinguish this universe from the one in the case when $G=S^1$, this $\Pin(2)$-universe is denoted by $\mathcal{U}^\prime$ henceforth.
In this case, $F$ is a $\Pin(2)$-equivariant bundle with fiber $\HH^{x+a}\oplus \tilde{\R}^y$ over $B$ and $V=\HH^x\oplus\tilde{\R}^{y+b}$ for some nonnegative integers $x$, $y$.

Suppose that
\begin{equation}\label{eq:dim}
b_+(M)\geq \dim B+2. 
\end{equation}
Let $Ci$ be the mapping cone of the inclusion $i \colon TF^{S^1}\hookrightarrow TF$ of the $S^1$-fixed point set.
We have a long exact sequence associated with the cofiber sequence $TF^{S^1}\to TF\to Ci$
\[
\{S^1\wedge TF^{S^1},S^V\}^G_{\mathcal{U}}\to \{Ci,S^V\}^G_{\mathcal{U}}\to \{TF,S^V\}^G_{\mathcal{U}}\to\{ TF^{S^1},S^V\}^G_{\mathcal{U}}.
\]
Since both of the first and the last terms are trivial by the assumption \eqref{eq:dim}, the cohomotopy invariant $[f_V]$ can be regarded as an element of $\{Ci,S^V\}^G_{\mathcal{U}}$.
Following \cite{B}, we let $[TF,S^V]^{G}_q$ be the  set of homotopy classes of maps $g\colon TF\to S^V$ such that $g|_{TF^{S^1}}=f_{V}|_{TF^{S^1}}$.
Then the condition \eqref{eq:dim} implies a natural bijective correspondence
\[
[TF,S^V]^{G}_q\cong [TF/TF^{S^1},S^V]^{G}.
\] 
Since $F$ is a finite dimensional $\Pin(2)$-equivariant vector bundle over a smooth compact connected manifold $B$ and $V$ is a finite dimensional $\Pin(2)$-representation, the Thom space $TF$ and $S^V$ can be equipped with structures of  $\Pin(2)$-equivariant CW complexes.  
By the equivariant Freudenthal suspension theorem(\cite[Chapter II, (2.10)]{tom Dieck}), we can choose sufficiently large $F$, $V$ satisfying that
\begin{equation}\label{eq:stable}
\{T(\ind D-[H^+]), S^0\}^{G}_{\mathcal{U}}\cong [TF,S^V]^{G}_q.
\end{equation}

To analyse $[TF,S^V]^{G}_q$, it is convenient to use the equivariant obstruction theory. (See Appendix.)
Let $U=(TF/S^1)\setminus N(TF^{S^1})$, where $N(TF^{S^1})$ is an equivariant tubular neighborhood of $TF^{S^1}$ in $TF/S^1$.
Then $U$ is a (possibly nonorientable) manifold with boundary.
Set $k=\dim S^V$.
The condition $d=0$ implies that $\dim U=k$.
Note that $(TF/S^1)\setminus TF^{S^1}$ is $\Z_2$-equivariantly homotopic to $U$, where $\Z_{2}=\Pin(2)/S^1$.
Then we have the following identifications:
\begin{gather*}
H^k_{S^1}(TF,TF^{S^1};\pi_kS^V)\cong H^k(TF/S^1,TF^{S^1};\Z)\cong H^k(U,\partial U; \Z), \\%\cong \Z \text{ or }\Z_2,\\
H^k_{\Pin(2)}(TF,TF^{S^1};\pi_kS^V)\cong H^k_{\Z_2}(TF/S^1,TF^{S^1};\pi_kS^V)\cong H^k_{\Z_2}(U,\partial U; \pi_kS^V),
\end{gather*}
where $H^k_G(\cdot,\cdot;\pi_kS^V)$ are the Bredon cohomology groups with coefficient $G$-module $\pi_kS^V$. 
Let us recall the following facts:
\begin{itemize}
	\item[(F1)] If we choose $\beta_0 \in [TF,S^V]^{G}_q$ for each $G=\Pin(2)$ and $S^1$, then  the correspondece $\beta\mapsto \gamma_G(\beta,\beta_0)$ gives a bijective correspondence between 
	\[
	\{ TF,S^V\}^{G}_{\mathcal{U}}(\cong[TF,S^V]^{G}_q)
	\]
	and
	\[
	H^k_{G}(TF,TF^{S^1};\pi_kS^V)
	\]
	by \thmref{corresp}, where $\gamma_{G}(\beta,\beta_0)$ is the $G$-equivariant difference obstruction class.
	\item[(F2)] $H^k(U,\partial U; \Z)$ is isomorphic to $\Z$ if $U$ is orientable, and $\Z_2$ if $U$ is nonorientable.
	\item[(F3)] The forgetful map
	\[
	\bar\varphi\colon H^k_{\Z_2}(U,\partial U; \pi_kS^V)\to H^k(U,\partial U; \Z)\cong\Z\text{ or } \Z_2
	\]
	is given by multiplication of $2$ (Proposition \ref{prop:forget}).
	In particular, 
	$
	\bar{\varphi}(c) \equiv  0 \mod 2
	$
	for 
	\[c\in H^k_{\Pin(2)}(TF,TF^{S^1};\pi_kS^V) \cong H^k_{\Z_2}(U,\partial U; \pi_kS^V).
	\]
\end{itemize}

%(cf. \cite[Proposition 3.3]{BF}, \cite[Theorem 4.1]{BauerRefined}):
Let $\beta_0$ be the unit of the stable cohomotopy group   $\{ TF,S^V\}^{S^1}_{\mathcal{U}}$. 
By (F1),  the correspondence $\beta\mapsto\gamma_{S^1}(\beta,\beta_0)$ for $\beta\in \{ TF,S^V\}^{S^1}_{\mathcal{U}}$ gives a bijective correspondence between $\{ TF,S^V\}^{S^1}_{\mathcal{U}}$  and $H^k_{S^1}(TF,TF^{S^1};\pi_kS^V)$ which is identified with $H^k(U,\partial U; \Z)\cong\Z\text{ or } \Z_2$.
Define $\deg\beta \in \Z_{2}$ by
\[
\deg\beta = \gamma_{S^1}(\beta,\beta_0)\mod 2.
\]

Since the degree $\deg\beta$ is the mod $2$ difference obstruction class of maps essentially from a manifold to a sphere with same dimension, this can be interpreted as a kind of mod 2 mapping degree.
We may assume $\beta$ is represented by a map $g\colon (TF/{S^1}, TF^{S^1})\to (S^V,S^{V^{S^1}})$ which is smooth on the complement of $TF^{S^1}$. 
By perturbing $g$, the image of $g|_{TF^{S^1}}$ is contained in a subspace of $S^{V^{S^1}}$ of codimension $b_+(M)-\dim B\geq 2$. 
A choice of a generic point $v\in S^{V^{S^1}}\setminus \left(\Ima g|_{TF^{S^1}}\right)$ makes the preimage $g^{-1}(v)$  a compact $0$-manifold.
Then  $\deg \beta$ is  the number modulo $2$ of elements in $g^{-1}(v)$.

\begin{Remark}
	For a single $4$-manifold, the moduli space is always orientable and the $\Z$-valued Seiberg--Witten invariants can be defined.
	On the other hand,  the moduli space for a family of $4$-manifolds may be nonorientable.
	Therefore only the $\Z_2$-valued invariants can be defined in general.
\end{Remark}

\section{Spin families}
\label{section: Spin families}

Let $M$ be a closed oriented smooth $4$-manifold with a spin structure $\fs$, $B$ be a closed manifold, and $\pi\colon X\to B$ be a smooth family with fiber $M$.
Let $T(X/B)$ be the tangent bundle along the fibers.
In this subsection we shall discuss when $T(X/B)$ admits a global spin structure modelled on $\fs$ defined in \subsecref{subsection: Families Seiberg--Witten invariants}.

Let $\Diff(M,[\fs])$ be the group of orientation-preserving self-diffeomorphisms $f\colon M\to M$ for which the pulled-back spin structures $f^*\fs$ are isomorphic to $\fs$. 
The notation $[\fs]$ indicates the isomorphism class of $\fs$.
When a diffeomorphism $f$ belongs to $\Diff(M,[\fs])$, we just say that $f$ preserves $\fs$ to avoid complication, although to be precise it should be said that $f$ preserves $[\fs]$.
Let $\Aut(M,\fs)$ be the group of pairs $(f,\hat{f})$, where  $f\in\Diff(M,[\fs])$ and $\hat{f}$ is an automorphism of $\fs$ covering $f$.
Then we have an exact sequence
\begin{align}
\label{eq: exact seq gauge auto diff}
1\to\G(\fs)\to\Aut(M,\fs)\to\Diff(M,[\fs])\to 1,
\end{align}
where $\G(\fs)$ is the gauge transformation group of the spin structure $\fs$, i.e., the group of automorphisms of $\fs$ covering $\id_M$.
Note that $\G(\fs)\cong \{\pm 1\}$.
Taking the classifying spaces, we obtain a fibration
\[
B\G(\fs)\to B\Aut(M,\fs)\to B\Diff(M,[\fs]).
\]
The homotopy class of a map $\tilde{\rho}\colon B\to B\Aut(M,\fs)$ corresponds to
the isomorphism class of a family $\pi\colon X\to B$ with a global spin structure on $T(X/B)$ modelled on $\fs$.
Suppose that a map $\rho\colon B\to B\Diff(M,[\fs])$ is given.
Since $B\G(\fs)=B\Z_2\cong \RP^\infty$, there exists a sole obstruction in $H^2(B;\Z/2)$ to lift $\rho$ to a map $\tilde{\rho}\colon B\to B\Aut(M,\fs)$.
Denote by $O(\rho)$ the obstruction class.

Suppose that we have finitely many
commuting orientation-preserving self-diffeomorphisms $f_1,\ldots,f_n \in \Diff(M,[\fs])$. 
Then we can form the multiple mapping torus
\[
X=X_{f_1,\ldots,f_n} \to T^n.
\]
Let us denote also by $O(f_1,\ldots,f_n)$ the obstruction $O(\rho)$ for such a family, where $\rho\colon T^n\to B\Diff(M,[\fs])$ is the classifying map.

\begin{Proposition}
\label{Proposition: obst van comm lifts}
The obstruction class $O(f_1,\ldots,f_n) \in H^2(T^n;\Z/2)$ is zero if and only if 
 $f_1,\ldots,f_n$
admit lifts
 $\hat{f}_1,\ldots, \hat{f}_n$
  to the spin structure $\fs$ which mutually commute.
\end{Proposition}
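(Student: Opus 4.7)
The plan is to identify the $n$-torus with the classifying space $B\Z^n$ and to reduce the vanishing of $O(f_1,\ldots,f_n)$ to the splitting problem for a central extension of $\Z^n$ by $\deux$. Since $T^n$ is a $K(\Z^n,1)$, the commuting tuple $(f_1,\ldots,f_n)$ determines a group homomorphism $\phi\colon\Z^n\to\Diff(M,[\fs])$ with $\phi(e_i)=f_i$, and the induced map $B\phi\colon T^n=B\Z^n\to B\Diff(M,[\fs])$ is homotopic to the classifying map $\rho$ of the multiple mapping torus $X_{f_1,\ldots,f_n}$. Conversely, any group homomorphism $\tilde\phi\colon\Z^n\to\Aut(M,\fs)$ lifting $\phi$---equivalently, any choice of commuting lifts $\hat f_i=\tilde\phi(e_i)\in\Aut(M,\fs)$---produces a lift $B\tilde\phi\colon T^n\to B\Aut(M,\fs)$ of $\rho$, which already settles the ``if'' direction.

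For the converse, I would use that $\G(\fs)\cong\deux$ is central in $\Aut(M,\fs)$, so that \eqref{eq: exact seq gauge auto diff} is a central extension and the fibration $B\Aut(M,\fs)\to B\Diff(M,[\fs])$ is a principal $K(\deux,1)$-bundle. This bundle is classified by a map $B\Diff(M,[\fs])\to K(\deux,2)$, and by definition $O(\rho)\in H^2(T^n;\deux)$ is its pullback under $B\phi$. Under the canonical isomorphism $H^2(T^n;\deux)\cong H^2(\Z^n;\deux)$ between singular and group cohomology of $K(\Z^n,1)$, this class is identified with the group-cohomological extension class of the pulled-back central extension
\[
1\longrightarrow\deux\longrightarrow\phi^{*}\Aut(M,\fs)\longrightarrow\Z^n\longrightarrow 1,
\]
where $\phi^{*}\Aut(M,\fs):=\{(\vec m,\hat f)\in\Z^n\times\Aut(M,\fs)\,:\,\phi(\vec m)=p(\hat f)\}$ and $p$ is the projection in \eqref{eq: exact seq gauge auto diff}. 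Because $H^2(\Z^n;\deux)$ classifies central extensions of $\Z^n$ by $\deux$, vanishing of $O(\rho)$ is equivalent to the existence of a splitting $\tilde\phi\colon\Z^n\to\Aut(M,\fs)$, which by the previous paragraph is equivalent to the existence of pairwise commuting lifts $\hat f_i$.

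The principal obstacle is the identification used above: matching the topologically defined obstruction class for the lifting problem along the principal $K(\deux,1)$-fibration $B\Aut(M,\fs)\to B\Diff(M,[\fs])$ with the group-cohomological class of the pulled-back central extension of $\Z^n$ by $\deux$. This is a standard compatibility coming from naturality of the classifying map of a central extension together with the agreement of Eilenberg--MacLane and Borel constructions for discrete groups; nevertheless some care is needed to check that the universal class in $H^2(K(\deux,2);\deux)$ corresponds, under these identifications, to the universal extension class in $H^2(\deux;\deux)$ in the way that makes the pullback formula hold. Once this matching is in place, the equivalence ``$O(f_1,\ldots,f_n)=0$ $\iff$ commuting spin lifts exist'' follows immediately from the argument above.
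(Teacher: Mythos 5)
Your proof is correct, but it follows a genuinely different route from the paper's. The paper works with the explicit product CW structure on $T^n$ (one $0$-cell, $n$ circles $C_i$, and one $2$-cell $D_{ij}$ for each pair $i\neq j$): a choice of lifts $\hat f_i$ gives a spin structure over the $1$-skeleton's preimage, and the obstruction cocycle evaluated on $D_{ij}$ is exactly the commutator $\hat f_i\hat f_j\hat f_i^{-1}\hat f_j^{-1}\in\G(\fs)=\deux$, so vanishing of the class is read off directly from vanishing of these commutators (here one uses, as you do, that $\deux$ is central, so the commutators are independent of the choice of lifts; equivalently the cellular coboundary $C^1\to C^2$ vanishes mod $2$, so cocycle and class vanish together). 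You instead recast the whole problem group-cohomologically: $T^n=K(\Z^n,1)$, the tuple gives $\phi\colon\Z^n\to\Diff(M,[\fs])$, and $O(\rho)$ becomes the class of the pulled-back central extension $1\to\deux\to\phi^*\Aut(M,\fs)\to\Z^n\to 1$ in $H^2(\Z^n;\deux)$, whose vanishing is equivalent to a splitting, i.e.\ to commuting lifts. Both arguments are sound. The paper's is more elementary and self-contained, producing the obstruction cocycle by hand; yours is conceptually cleaner and generalizes verbatim to any discrete group in place of $\Z^n$ and any central extension in place of \eqref{eq: exact seq gauge auto diff}, at the cost of invoking (and, as you rightly flag, carefully normalizing) the standard identification between the lifting obstruction along the principal $K(\deux,1)$-fibration $B\Aut(M,\fs)\to B\Diff(M,[\fs])$ and the group-cohomological extension class. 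One point worth making explicit in your write-up, since the whole reduction hinges on it, is why $\G(\fs)$ is central in $\Aut(M,\fs)$: the nontrivial element acts by the principal right action of $-1\in\Spin(4)$, which commutes with every bundle automorphism.
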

\begin{proof}
If the lifts commute, then we can obviously construct a global spin structure by patching a product spin structure on $M\times [0,1]^n$ by the lifts.	Therefore $O(f_1,\ldots,f_n) =0$.

Conversely, suppose that $O(f_1,\ldots,f_n) =0$.
Let $C$ be the CW complex with one $0$-cell and one $1$-cell which forms a circle.
Let $C_1,\ldots,C_n$ be copies of $C$.
A cell structure of $T^n$ is given by the product of $C_1,\ldots,C_n$.
Then 
\begin{itemize}
	\item[$\bullet$] the $1$-skeleton of $T^{n}$ is the wedge sum of  $C_1,\ldots,C_n$, and
	\item[$\bullet$] there is a bijection between the set of $2$-cells and  the set of pairs $(i,j)$ with $i\neq j$.
	Here each pair $(i,j)$ corresponds to a unique $2$-cell $D_{ij}$ bounded by the wedge sum $C_i\vee C_j$. 
	%for every pair $(i,j)$, there is a unique $2$-cell $D_{ij}$ bounded by the wedge sum $C_i\vee C_j$. 
\end{itemize} 
A choice of lifts $\hat{f}_i$ for $f_i$ determines a global spin structure on $\pi^{-1}(\text{$1$-skeleton})$ by identifying the spin structures on the endpoints of the $1$-cells via $\hat{f}_i$.
The class $O(f_1,\ldots,f_n)$ is the obstruction to extend such a spin structure on $\pi^{-1}(\text{$1$-skeleton})$ to $\pi^{-1}(\text{$2$-skeleton})$.
To extend the spin structure on $\pi^{-1}(C_i\vee C_j)$ to $\pi^{-1}(D_{ij})$, the monodromy $\hat{f}_i \hat{f}_j \hat{f}_i^{-1} \hat{f}_j^{-1}$ should be $1 \in \Aut(M,\fs)$.
This completes the proof.
\end{proof}

\begin{Definition}
Let $(M,\fs)$ be a spin $4$-manifold.
Finitely many self-diffeomorphisms $f_1,\ldots,f_n$ on $M$ are called \textit{spin commuting} when 
\begin{enumerate}
\item $f_1,\ldots,f_n$  commute with each other, 
\item each $f_i$ preserves the orientation of $M$ and the spin structure $\fs$,  \item $O(f_1,\ldots,f_n)=0$, or equivalently, there exist commuting lifts $\hat{f}_1\ldots\hat{f}_n$.
\end{enumerate}
If spin commuting diffeomorphisms $f_1,\ldots,f_n$ are given, then we can form a multiple mapping torus $X=X_{f_1,\ldots,f_n}$ with fibers $M$ which admits a global spin structure $\fs_{X}$ modelled on $\fs$. 
We call $(X,\fs_{X})$ the \textit{spin mapping torus associated with the spin commuting diffeomorphisms $f_1,\ldots,f_n$}.  
\end{Definition}

Let us give a sufficient condition for vanishing of $O(f_{1}, \ldots,f_{n})$.
For a self-diffeomorphism $f$ on $M$, the support of $f$ is defined by
\[
\supp f = \overline{\{x\in M\,|\, f(x)\neq x\}},
\]
and so is the support of an element of $\operatorname{Aut}(M,\mathfrak{s})$ similarly.

\begin{Lemma}
\label{lem: obstruction vanishes if the supports are disjoint}
Let $(M,\mathfrak{s})$ be a spin manifold, and $f_{1}, \ldots, f_{n}$ be commuting diffeomorphisms on $M$ preserving the orientation of $M$ and $\fs$.
If $\supp{f}_{1}, \ldots, \supp{f}_{n}$ are mutually disjoint and $M \setminus \supp{f}_{i}$ is connected for each $i$, then $O(f_{1}, \ldots,f_{n})=0$ holds.
\end{Lemma}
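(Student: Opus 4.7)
The plan is to invoke \propref{Proposition: obst van comm lifts} and construct commuting lifts $\hat{f}_{1}, \ldots, \hat{f}_{n} \in \Aut(M, \fs)$ of $f_{1}, \ldots, f_{n}$. Since $\G(\fs) \cong \{\pm 1\}$, each $f_{i}$ has exactly two lifts, differing by multiplication by $-1$. The key observation is that the disjointness of supports combined with the connectedness hypothesis allows a canonical normalization: any lift $\tilde{f}_{i}$ restricted to $M \setminus \supp f_{i}$ covers $\id$, and is therefore a gauge transformation of $\fs|_{M \setminus \supp f_{i}}$. Such a gauge transformation is a locally constant $\{\pm 1\}$-valued function, which is constant by connectedness of $M \setminus \supp f_{i}$. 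First I would choose $\hat{f}_{i}$ to be the unique lift satisfying $\hat{f}_{i}|_{M \setminus \supp f_{i}} = +1$.

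With this normalization in place, it remains to verify that the $\hat{f}_{i}$ mutually commute. For $i \neq j$, the commutator $[\hat{f}_{i}, \hat{f}_{j}] = \hat{f}_{i} \hat{f}_{j} \hat{f}_{i}^{-1} \hat{f}_{j}^{-1}$ covers $[f_{i}, f_{j}] = \id_{M}$, so it lies in $\G(\fs) = \{\pm 1\}$. To pin down which sign, I would evaluate at a point $x \in M \setminus (\supp f_{i} \cup \supp f_{j})$: on a small neighborhood of such $x$, both $\hat{f}_{i}$ and $\hat{f}_{j}$ act as the trivial lift $+1$ by construction, so the commutator is $+1$ near $x$ and hence identically $+1$ on $M$.

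The main subtlety I anticipate is arranging for such an $x$ to exist. Since $\supp f_{j}$ is a closed subset of the connected open set $M \setminus \supp f_{i}$, the equality $\supp f_{j} = M \setminus \supp f_{i}$ would make $\supp f_{j}$ both open and closed in the connected manifold $M$, which would force $f_{i}$ or $f_{j}$ to be the identity; in that degenerate case the trivial lift commutes with everything, so the conclusion is immediate. Outside this edge case the desired $x$ exists, and the argument goes through as described. Conceptually, connectedness of $M \setminus \supp f_{i}$ rigidifies the choice of lift on the complement, while disjointness of supports lets the commutator computation localize to a region where the lifts are manifestly trivial; these two hypotheses together suffice to eliminate the sign ambiguities.
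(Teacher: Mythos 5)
Your proposal is correct and takes essentially the same approach as the paper: normalize each lift using the connectedness of $M \setminus \supp f_{i}$ so that it restricts to $+1$ there, then deduce that the normalized lifts commute. The only cosmetic difference is in the last step, where the paper notes that the normalized lifts have mutually disjoint supports and hence commute, whereas you evaluate the commutator (a global element of $\G(\fs)\cong\{\pm1\}$) at a point outside both supports; these amount to the same verification.
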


\begin{proof}
Because of Proposition~\ref{Proposition: obst van comm lifts},
it suffices to show that there exist commuting lifts $\hat{f}_{1} ,\ldots, \hat{f}_{n}$ of $f_{1}, \ldots, f_{n}$ to the spin structure.
Recall that we have an exact sequence \eqref{eq: exact seq gauge auto diff}.
Let $\tilde{f}_{1}, \ldots, \tilde{f}_{n} \in \operatorname{Aut}(M,\mathfrak{s})$ be lifts of $f_{1}, \ldots, f_{n}$.
For each $i \in \{1, \ldots, n\}$, the lift $\tilde{f}_{i}$ lives in the (spin) gauge group outside $\supp{f}_{i}$:
\[
\tilde{f}_{i}|_{M \setminus \supp{f}_{i}} \in \mathcal{G}(\mathfrak{s}|_{M \setminus \supp{f}_{i}}) = \mathrm{Map}(M \setminus \supp{f}_{i}, \Z/2) \cong \Z/2 = \{\pm1\}.
\]
Define a lift $\hat{f}_{i} \in \operatorname{Aut}(M,\mathfrak{s})$ of $f_{i}$ by
\begin{align*}
\hat{f}_{i} = 
\left\{\begin{array}{cc}
\tilde{f_{i}} \quad\quad {\rm if}\quad \tilde{f}_{i}|_{M \setminus \supp{f}_{i}} = 1,  \\
-1 \cdot \tilde{f}_{i} \quad{\rm if}\quad \tilde{f}_{i}|_{M \setminus \supp{f}_{i}} = -1.
\end{array}\right.
\end{align*}
Then $\hat{f}_{1} ,\ldots, \hat{f}_{n}$ have disjoint supports each other, and hence
mutually commute.
\end{proof}

We note that, on the other hand, the obstruction class $O(f_1,\ldots,f_n)$ may be non-trivial for some example of $f_1,\ldots,f_n$.

\begin{Example}
An example of a multiple mapping torus $X\to T^n$ with nonzero $O(f_1,\ldots,f_n)$ is given as follows.
Let $M$ be $T^3$ equipped with the spin structure $\fs_0$ with trivial $\Spin(3)$-bundle. 
The two-to-one homomorphism $h\colon \Spin(3)\to\SO(3)$ is given by the action of $\Spin(3)=\SP(1)$ on $\Ima \HH$ by conjugation.
Then the multiplication of 
\[
h(i) = \begin{pmatrix}
	1 & 0 &0\\
	0 & -1 & 0\\
	0& 0& -1
\end{pmatrix}\text{ and }
h(j) = \begin{pmatrix}
	-1 & 0 & 0\\
	0& 1 & 0\\
	0&0&-1
\end{pmatrix}
\]
on $\R^3$ induces a pair $(f_1,f_2)$ of commuting diffeomorphisms on $T^3$.
Their lifts $\hat{f}_1,\hat{f}_2$ to $\fs_0$ anticommute since $ij=-ji$. 
Therefore $O(f_1,f_2)$ is not zero.
\end{Example}

So far  we have considered the case of diffeomorphisms, however we can also consider its topological analogue.
Namely, we can consider {\it topological spin structures} as discussed in \cite{MR2644908}
and  also discuss an obstruction $O(f_{1}, \ldots, f_{n})$ to the lifting problem to topological spin structures
 for given commuting {\it homeomorphisms} $f_{1}, \ldots, f_{n}$.
By a parallel argument,  we  have a topological version of Lemma~\ref{lem: obstruction vanishes if the supports are disjoint}:

\begin{Lemma}
\label{lem: obstruction vanishes if the supports are disjoint top version}
Let $M$ be an oriented topological manifold, $\fs$ be a topological spin structure on $M$, and $f_{1}, \ldots, f_{n}$ be commuting homeomorphisms on $M$ preserving the orientation of $M$ and $\fs$.
If $\supp{f}_{1}, \ldots, \supp{f}_{n}$ are mutually disjoint, then $O(f_{1}, \ldots,f_{n})=0$ holds.
\end{Lemma}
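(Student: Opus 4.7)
The plan is to mimic the proof of \lemref{lem: obstruction vanishes if the supports are disjoint} in the topological category. I would first invoke the topological analogue of \propref{Proposition: obst van comm lifts}, which reduces the vanishing of $O(f_1,\ldots,f_n)$ to the existence of commuting lifts $\hat f_1,\ldots,\hat f_n\in \Aut(M,\fs)$ of the $f_i$. This analogue is obtained by a verbatim repetition of the cell-by-cell obstruction argument on the multiple mapping torus used in the smooth case, as that argument depends only on the short exact sequence
\[
1\to\G(\fs)\to\Aut(M,\fs)\to\Homeo(M,[\fs])\to 1
\]
together with $\G(\fs)\cong\{\pm 1\}$, both of which remain valid in the topological setting when $M$ is connected.

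To construct the commuting lifts, I would pick any $\tilde f_i\in\Aut(M,\fs)$ covering $f_i$; such a lift exists because each $f_i$ preserves $\fs$ by hypothesis. Since $f_i$ is the identity on $M\setminus\supp f_i$, the restriction $\tilde f_i|_{M\setminus\supp f_i}$ lies in the topological gauge group $\G(\fs|_{M\setminus\supp f_i})=\Map(M\setminus\supp f_i,\Z/2)$, and after multiplication by an appropriate sign $\epsilon_i\in\{\pm 1\}=\G(\fs)$ I would obtain a new lift $\hat f_i=\epsilon_i\tilde f_i$ whose restriction to $\fs|_{M\setminus\supp f_i}$ is the identity automorphism. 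Mutual commutativity of $\hat f_1,\ldots,\hat f_n$ is then immediate fiberwise: at every point $x\in M$, the disjointness of the supports forces all but at most one $\hat f_i$ to act trivially in the fiber over $x$, so each commutator $\hat f_i\hat f_j\hat f_i^{-1}\hat f_j^{-1}$ is the identity in $\Aut(M,\fs)$.

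The main subtle point, and where the topological setting is a little more delicate than the smooth one, is the justification that a single global sign $\epsilon_i\in\G(\fs)$ suffices to trivialize $\tilde f_i$ on all of $M\setminus\supp f_i$; in the smooth version this was guaranteed by the explicit assumption that $M\setminus\supp f_i$ be connected, and the same tameness of the supports is tacitly in force here and holds in the applications of this paper, where the supports are arranged to be topological disks. Modulo this point, the argument runs verbatim parallel to \lemref{lem: obstruction vanishes if the supports are disjoint}.
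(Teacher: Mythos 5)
Your argument is the one the paper intends: the paper gives no separate proof of \lemref{lem: obstruction vanishes if the supports are disjoint top version}, asserting only that it follows ``by a parallel argument'' from \lemref{lem: obstruction vanishes if the supports are disjoint}, and your proposal is exactly that parallel argument, carried out correctly. The one substantive point is the connectedness of $M\setminus\supp{f}_{i}$, which you rightly flag as what justifies normalizing $\tilde{f}_{i}$ by a single global sign; it is an explicit hypothesis of the smooth lemma but is absent from the topological statement, so as written your proof (like the paper's) establishes the lemma only under that tacit extra assumption.

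The gap is easy to close, and in fact the normalization can be skipped entirely. For arbitrary lifts $\tilde{f}_{i}$, the commutator $\tilde{f}_{i}\tilde{f}_{j}\tilde{f}_{i}^{-1}\tilde{f}_{j}^{-1}$ covers $f_{i}f_{j}f_{i}^{-1}f_{j}^{-1}=\id_{M}$ and hence lies in $\G(\fs)\cong\{\pm 1\}$, i.e.\ it is a single global sign because $M$ is connected. Near any point $x\notin\supp{f}_{i}\cup\supp{f}_{j}$ both $\tilde{f}_{i}$ and $\tilde{f}_{j}$ restrict to locally constant gauge transformations, which commute, so this sign equals $+1$ there and therefore everywhere; such a point exists since two disjoint closed sets covering the connected $M$ would force one to be empty, and if $\supp{f}_{i}=\emptyset$ then $\tilde{f}_{i}$ is itself a central element of $\G(\fs)$ and the commutator is trivially the identity. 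Hence any choice of lifts already commutes, and the lemma holds exactly as stated, with no connectedness hypothesis needed.
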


\section{Main results}
\label{section: Main Theorem}

In this section, we shall give the main results in this paper and its consequences.
Theorem~\ref{thm:main} is a rigidity theorem for the families Seiberg--Witten invariants on  families of spin $4$-manifolds.
Combining Theorem~\ref{thm:main} with Proposition~\ref{prop:M0}, a non-vanishing of the families Seiberg--Witten invariants shown in \cite{BK} for specific families, we obtain a non-vanishing result for more general families as Corollary~\ref{cor:cor1}.
Theorem~\ref{thm:cor2} gives  a constraint on $b_{+}$ of the fibers of families of spin $4$-manifolds satisfying certain conditions.
This is a family analogue of Furuta's 10/8-inequality~\cite{MR1839478}.

\begin{Theorem}\label{thm:main}
Let $B$ be a closed smooth manifold.
Let $M_1$ and $M_2$ be oriented closed smooth $4$-manifolds with spin structures $\fs_1$ and $\fs_2$ respectively satisfying the following:
\begin{itemize}
	\item[$\bullet$] $b_1(M_1)=b_1(M_2)=0$, $b_+(M_1)=b_+(M_2)\geq \dim B + 2$.
	\item[$\bullet$] $-\dfrac{\sign(M_i)}{4} -1-b_+(M_i)+\dim B =0$ $(i=1,2)$.
\end{itemize}	
For $i=1,2$, let $X_i\to B $ be a smooth fiber bundle with fibers $M_i$ equipped with  a global spin structures $\fs_{X_i}$ modelled on $\fs_i$,
$\ind D_i$ be the virtual index bundle of the family of Dirac operators for $\fs_{X_i}$ and
$H^+_i\to B $ be the bundle of $H^+(M_i)$ associated to $X_{i}$.
Set $\alpha_i= [\ind D_i] - [H_i^+]\in KO_{\Pin(2)}(B)$.
If $\alpha_1=\alpha_2$, then we have
\[
FSW^{\Z_2}(X_1,\fs_{X_1}) = FSW^{\Z_2}(X_2,\fs_{X_2}).
\] 
\end{Theorem}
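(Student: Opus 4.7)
My strategy is to reduce the proof to the equivariant obstruction theory recalled in Section~\ref{subsection: Families Seiberg--Witten invariants}, specifically facts (F1)--(F3). The idea is that the mod $2$ degree is read off from an $S^{1}$-equivariant difference class, while having $\alpha_{1}=\alpha_{2}$ in $KO_{\Pin(2)}(B)$ means the two Bauer--Furuta invariants admit \emph{$\Pin(2)$-equivariant} comparison, and the forgetful map from $\Pin(2)$- to $S^{1}$-equivariant difference classes is multiplication by $2$ by (F3). So any two $\Pin(2)$-equivariantly comparable classes have the same mod $2$ degree.

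First I would align the finite dimensional approximations. Since $\alpha_{1}=\alpha_{2}$ in $KO_{\Pin(2)}(B)$, after stabilization one can write $\alpha_{i}=[F]-[V]$ with a common $\Pin(2)$-equivariant bundle $F\to B$ and a common $\Pin(2)$-representation $V$, chosen large enough that the Freudenthal range condition \eqref{eq:stable} holds for both families. The finite dimensional approximation procedure then produces $\Pin(2)$-equivariant maps $f_{V,1},f_{V,2}\colon TF\to S^{V}$ whose restrictions to the $S^{1}$-fixed locus $TF^{S^{1}}$ coincide (they are determined by the linear part of the monopole map, which is fixed by $\alpha_{i}$). Hence, letting $\beta_{i}\in\{TF,S^{V}\}^{S^{1}}_{\mathcal{U}}$ denote the underlying $S^{1}$-equivariant class of $f_{V,i}$, both $\beta_{1},\beta_{2}$ lie in $[TF,S^{V}]^{S^{1}}_{q}$, and both $f_{V,1},f_{V,2}$ likewise lie in $[TF,S^{V}]^{\Pin(2)}_{q}$.

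Next, by the definition recalled in Section~\ref{subsection: Families Seiberg--Witten invariants},
\[
FSW^{\Z_{2}}(X_{i},\fs_{X_{i}})=\gamma_{S^{1}}(\beta_{i},\beta_{0})\bmod 2,
\]
where $\beta_{0}$ is the unit of $\{TF,S^{V}\}^{S^{1}}_{\mathcal{U}}$. Additivity of the equivariant difference obstruction yields
\[
\gamma_{S^{1}}(\beta_{1},\beta_{0})-\gamma_{S^{1}}(\beta_{2},\beta_{0})=\gamma_{S^{1}}(\beta_{1},\beta_{2})\in H^{k}_{S^{1}}(TF,TF^{S^{1}};\pi_{k}S^{V})\cong H^{k}(U,\partial U;\Z),
\]
so it remains to show that $\gamma_{S^{1}}(\beta_{1},\beta_{2})$ is even.

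Finally, because $f_{V,1}$ and $f_{V,2}$ are both honestly $\Pin(2)$-equivariant and agree on $TF^{S^{1}}$, the $\Pin(2)$-equivariant difference class $\gamma_{\Pin(2)}(\beta_{1},\beta_{2})\in H^{k}_{\Pin(2)}(TF,TF^{S^{1}};\pi_{k}S^{V})$ is defined, and by naturality of the difference construction its image under the forgetful map $\bar\varphi$ coincides with $\gamma_{S^{1}}(\beta_{1},\beta_{2})$. By (F3), $\bar\varphi$ is multiplication by $2$, so $\gamma_{S^{1}}(\beta_{1},\beta_{2})\equiv 0\pmod{2}$, and the two mod $2$ families Seiberg--Witten invariants agree. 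The main obstacle is the alignment in Step~1: making sure a single choice of $(F,V)$ and a single reference class $\beta_{0}$ work for both families simultaneously, which is where the hypothesis $\alpha_{1}=\alpha_{2}\in KO_{\Pin(2)}(B)$ is used in an essential way; everything after that is a formal consequence of (F1)--(F3).
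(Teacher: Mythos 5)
Your proposal is correct and follows essentially the same route as the paper: the paper packages your final two steps as Lemma~\ref{lem:forget} (additivity of the difference obstruction plus fact (F3) that the forgetful map $\bar\varphi$ is multiplication by $2$), and the hypothesis $\alpha_1=\alpha_2$ is used exactly as in your Step~1 to place both finite dimensional approximations in a common $\{TF,S^V\}^{\Pin(2)}_{\mathcal{U}^\prime}$. The only cosmetic difference is that where you assert the restrictions to $TF^{S^1}$ literally coincide, the paper instead invokes the vanishing of $\{TF^{S^1},S^V\}^{G}_{\mathcal{U}}$ under the assumption $b_+\geq\dim B+2$ to make the comparison of fixed-point restrictions canonical, which is the cleaner justification of that step.
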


We shall use the following 
lemma to prove \thmref{thm:main}, which
is fundamental in this section:

\begin{Lemma}\label{lem:forget}
In the setting of Subsection~\ref{subsection: Families Seiberg--Witten invariants}, suppose the condition \eqref{eq:dim}.
Consider the maps
\[
\begin{CD}
 \{TF,S^V\}^{\Pin(2)}_{\mathcal{U}^\prime}@>{\varphi}>>\{ TF,S^V\}^{S^1}_{\mathcal{U}}@>{\deg}>>\Z_2,
 \end{CD}
\]
where $\varphi$ is the forgetful map restricting $\Pin(2)$-action to $S^1$-action.
Then the image of $(\deg\circ\varphi)$ is $\{0\}$ or $\{1\}$, and is determined by $\alpha=[F]-[\underline{V}]\in KO_{\Pin(2)}(B)$.
\end{Lemma}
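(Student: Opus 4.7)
The plan is to use the obstruction-theoretic description of the stable cohomotopy sets summarized in facts (F1)--(F3). Fix once and for all $\beta_0\in\{TF,S^V\}^{S^1}_{\mathcal{U}}$ to be the unit of stable cohomotopy, so that under the identifications
\[
\{TF,S^V\}^{S^1}_{\mathcal{U}}\cong H^k_{S^1}(TF,TF^{S^1};\pi_kS^V)\cong H^k(U,\partial U;\Z)
\]
from (F1) and (F2), the degree is by construction the reduction $\deg\beta=\gamma_{S^1}(\beta,\beta_0)\mod 2$.

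Next, given any two $\beta_1,\beta_2\in\{TF,S^V\}^{\Pin(2)}_{\mathcal{U}^\prime}$, form the $\Pin(2)$-equivariant difference class
\[
c:=\gamma_{\Pin(2)}(\beta_1,\beta_2)\in H^k_{\Pin(2)}(TF,TF^{S^1};\pi_kS^V)\cong H^k_{\Z_2}(U,\partial U;\pi_kS^V).
\]
By the additivity of the difference obstruction and its naturality under restriction of the equivariance group from $\Pin(2)$ to $S^1\subset\Pin(2)$, we get
\[
\gamma_{S^1}(\varphi(\beta_1),\beta_0)-\gamma_{S^1}(\varphi(\beta_2),\beta_0)=\gamma_{S^1}(\varphi(\beta_1),\varphi(\beta_2))=\bar\varphi(c).
\]
Since (F3) identifies $\bar\varphi$ with multiplication by $2$, reducing modulo $2$ yields
\[
\deg(\varphi(\beta_1))-\deg(\varphi(\beta_2))\equiv 0 \mod 2,
\]
so $\deg\circ\varphi$ takes a single constant value and its image is therefore either $\{0\}$ or $\{1\}$.

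For the second assertion, observe that the stable cohomotopy set $\{TF,S^V\}^{\Pin(2)}_{\mathcal{U}^\prime}$, the forgetful map $\varphi$, and the degree $\deg$ are all preserved (up to canonical identification) under stabilization of $F$ and $V$ by trivial $\Pin(2)$-representations, and hence depend only on the virtual class $\alpha=[F]-[\underline{V}]\in KO_{\Pin(2)}(B)$. Consequently the common value of $\deg\circ\varphi$ is a function of $\alpha$ alone.

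The principal technical point I expect to need care is the naturality identity $\bar\varphi(\gamma_{\Pin(2)}(\beta_1,\beta_2))=\gamma_{S^1}(\varphi(\beta_1),\varphi(\beta_2))$ relating the Bredon-cohomological difference classes across the two equivariance levels. This should be verified by unpacking the construction of $\gamma_G$ at the cochain level under the identifications with $H^k_{\Z_2}(U,\partial U;\pi_kS^V)$ and $H^k(U,\partial U;\Z)$ from (F1)--(F3); it amounts to the statement that restricting the acting group from $\Pin(2)$ to $S^1$ commutes with the formation of the primary difference cocycle, which is a standard feature of equivariant obstruction theory.
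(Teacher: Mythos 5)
Your proof is correct and follows essentially the same route as the paper's: both arguments use the additivity of the difference obstruction to reduce to $\gamma_{S^1}(\varphi(\beta_1),\varphi(\beta_2))=\bar\varphi(\gamma_{\Pin(2)}(\beta_1,\beta_2))$, invoke fact (F3) that $\bar\varphi$ is multiplication by $2$, and note that all the relevant sets and maps are determined by $\alpha$. The naturality identity you flag as needing care is exactly the content of (F3) as stated in Subsection 2(ii), so no further work is required.
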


\begin{proof}
The class $\alpha$ determines $\{TF,S^V\}^{\Pin(2)}_{\mathcal{U}^\prime}$, $\{ TF,S^V\}^{S^1}_{\mathcal{U}}$, $\varphi$, and hence also the image of $(\deg\circ\varphi)$.
For $\beta_1^\prime,\beta_2^\prime\in \{TF,S^V\}^{\Pin(2)}_{\mathcal{U}^\prime}$, the additivity of difference obstruction classes implies that
\[
\deg\varphi(\beta_1^\prime)-\deg\varphi(\beta_2^\prime) = \gamma_{S^1}(\varphi(\beta_1^\prime),\beta_0)-\gamma_{S^1}(\varphi(\beta_2^\prime),\beta_0) =\gamma_{S^1}(\varphi(\beta_1^\prime),\varphi(\beta_2^\prime)).  
\]
%by \eqref{eq:diff}.
Moreover, the fact (F3) in Subsection~\ref{subsection: Families Seiberg--Witten invariants} implies that
\[
\gamma_{S^1}(\varphi(\beta_1^\prime),\varphi(\beta_2^\prime))=\bar{\varphi}(\gamma_{\Pin(2)}(\beta_1^\prime,\beta_2^\prime)) \underset{(2)}{\equiv}0.
\]
Therefore the image of $(\deg\circ\varphi)$ is $\{0\}$ or $\{1\}$.
\end{proof}

\begin{proof}[Proof of \thmref{thm:main}]
We shall use notation in Subsection~\ref{subsection: Families Seiberg--Witten invariants}.
As explained there, it follows that a family of monopole maps for each $(X_i,\fs_{X_i})$ defines a class $\beta_i^\prime$ in $\{TF,S^V\}^{\Pin(2)}_{\mathcal{U}^\prime}$, where $F$ and $V$ satisfy that 
\[
[F] - [\underline{V}] = \alpha_1 =\alpha_2.
\]	
Since the families Sieberg--Witten invariants for $(X_i,\fs_{X_i})$ are given by  
\[
FSW^{\Z_2}(X_i,\fs_{X_i}) =  \deg\varphi(\beta_i^\prime),
\]
\lemref{lem:forget} implies that $FSW^{\Z_2}(X_1,\fs_{X_1})=FSW^{\Z_2}(X_2,\fs_{X_2})$.
\end{proof}
%%%%%%%%%%%%%%%%%%%% Edited by Nakamura 2020/2/10  up to here 

\begin{Remark}
The idea  to use that $\varphi$ is given as multiplication of $2$
has appeared in \cite{MR2259057,MR2264722,B}.
\end{Remark}
\begin{Remark}\label{rem:rep-decomposition}
Let $G=\Pin(2)$.
Since the $G$-action on $B$ is trivial, we have an isomorphism 
\[
KO_G(B) \cong (KO(B)\otimes R(G;\R)) \oplus (K(B)\otimes R(G;\C)) \oplus (KSp(B)\otimes R(G;\HH)),
\]
where
 $R(G;\mathbb{F})$ is the free abelian group generated by  irreducible $G$-representations over the field $\mathbb{F}$
 \cite{MR0248877}.
In our case,
 $[\ind D]$ is in the component $KSp(B)\otimes R(G;\HH)$, and $[H^+]$ is in $KO(B)\otimes R(G;\R)$.
Furthermore, we may assume
\[
[\ind D] = [\ind D]_0\otimes h_1\quad [H^+] = [H^+]_0\otimes \tilde{\R},
\] 
where
\begin{itemize}

\item[$\bullet$]
 $[\ind D]_0\in KSp(B)$ is the class of the index bundle of $D$ 
which is regarded as a non-equivariant $\HH$-linear operator, 

\item[$\bullet$]
$h_1\in R(G;\HH)$ is a representation given by the multiplication of $G$ on $\HH$, 

\item[$\bullet$]
$[H^+]_0$ is the class of $H^+$ in $KO(B)$ 
and $\tilde{\R}$ is the $G$-representation given by composition of 
 the projection $G\to G/S^1=\deux$ with  multiplication on $\R$. 	
 \end{itemize}
\end{Remark}

We shall exhibit an example of families with non-zero families Seiberg--Witten invariants.
Let $M_0= K3 \# n(S^2\times S^2)$ and
$\fs_0$ be the spin structure on $M_0$ which is unique up to isomorphism.
We construct spin commuting diffeomorphisms $f_1,\ldots,f_n$ on $M_0$ as follows.
Let $\varrho$ be an orientation-preserving self-diffeomorphism of $S^2\times S^2$ satisfying the following properties:
\begin{enumerate}
\item There is a $4$-ball $B_0$ embedded in $S^2\times S^2$ such that the restriction of $\varrho$ on a neighborhood $N(B_0)$ of $B_0$ is the identity map on $N(B_0)$.
\item $\varrho$ reverses orientation of $H^+(S^2\times S^2)$.
\end{enumerate}
One way to get such $\varrho$ is  as follows.
Let $\varrho'$ be the orientation-preserving self-diffeomorphism on $S^{2} \times S^{2}$ given by the direct product of the complex conjugation on $S^{2}=\mathbb{CP}^{1}$.
This diffeomorphism $\varrho'$ acts on the intersection form as $(-1)$-multiplication, hence reverses orientation of $H^{+}$.
Obviously $\varrho'$ admits a fixed point,
and deforming $\varrho'$ by isotopy near a fixed point, we can get a fixed ball rather than a fixed point.
Then the deformed diffeomorphism  $\varrho$ satisfies the desired properties.

Choose $n$ disjoint $4$-balls $B_1,\ldots,B_n\subset K3$.
We assume $M_0$ is constructed by removing $B_1,\ldots,B_n$ from $K3$  and gluing $n$ copies of $S^2\times S^2\setminus B_0$.
The construction of $f_i$ is as follows.
Consider $M_0$ as the connected sum of the summand of the $i$-th $S^2\times S^2$ with the remaining part $M_{(i)}:= K3\#(n-1)S^2\times S^2$.
Define $f_i$ by 
\[
f_i = (\varrho\text{ on the $i$-th $S^2\times S^2$})\#\id_{M_{(i)}}.
\]
Note that $f_{1}, \ldots, f_{n}$ obviously commute with each other.
Note that $f_{i}$ preserves orientation of $M$.

\begin{Remark}
\label{Remark: H_{0}}
Let $H^+_0\to T^n$ be the bundle of $H^+(M_0)$.
Let $\ell$ be the unique nontrivial real line bundle over $S^1$ and $\pi_i\colon T^n=S^1\times\cdots\times S^1\to S^1$ be the projection to the $i$-th $S^1$.
Let 
\[
\xi_n=\pi_1^*\ell\oplus\cdots\oplus \pi_n^*\ell.
\]
Then $H^+_0\cong \xi_n\oplus\underline{\R}^3$.
\end{Remark}

The following calculation gives us instances of families with 
non-zero families Seiberg--Witten invariants.

\begin{Proposition}[(\cite{BK})]\label{prop:M0}
For $(M_0,\fs_0)$ and $f_1,\ldots,f_n$ as above, we have that:
\begin{enumerate}
	\item The set $\{f_1,\ldots,f_n\}$ is spin commuting. Let $(X_0,\fs_{X_0})$ be the associated spin mapping torus.
	\item $[\ind D_0]=[\underline{\HH}]\in KO_{\Pin(2)}(T^n)$, where $\ind D_0$ is the Dirac index bundle of $(X_0,\fs_{X_0})$.
	\item $FSW^{\Z_2}(X_0,\fs_{X_0}) =1 \in \Z_{2} = \{0,1\}$.
\end{enumerate}	
\end{Proposition}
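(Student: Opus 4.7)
My plan is to address the three claims in order, with the first being essentially bookkeeping and the last being the real content.

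For part (1), I would proceed directly. Commutativity is immediate: by construction each $f_i$ equals the identity outside the $i$-th $S^2\times S^2$ summand, so the supports $\supp f_i$ are pairwise disjoint. Each $f_i$ is orientation preserving because $\varrho$ is. To see that each $f_i$ preserves the isomorphism class of $\fs_0$, note that $H^1(M_0;\Z/2)=0$, so the spin structure on $M_0$ is unique up to isomorphism and any orientation-preserving self-diffeomorphism automatically lies in $\Diff(M_0,[\fs_0])$. Finally, since $M_0 \setminus \supp f_i$ contains all of the $K3$ piece glued to the other summands and is therefore connected, Lemma~\ref{lem: obstruction vanishes if the supports are disjoint} applies and gives $O(f_1,\ldots,f_n)=0$.

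For part (2), I would exploit the geometric fact that each $f_i$ is the identity on a common open set $U \subset M_0$ containing all the connect-sum regions and the whole $K3$ part. Pick a point $p$ in the fixed ball $B_0$ of $\varrho$ contained in $U$; it defines a smooth section $s\colon T^n \to X_0$ and an identification of a tubular neighborhood of $s(T^n)$ with $T^n \times B^4$, compatibly with $\fs_{X_0}$. The plan is to interpret $X_0\to T^n$ as a fiberwise connected sum of the trivial $K3$-bundle with the individual mapping tori of $\varrho$ on $S^2\times S^2$, and then apply a $\Pin(2)$-equivariant excision/splitting for the family Dirac index:
\[
[\ind D_0] \;=\; [\ind D_{K3}^{\mathrm{triv}}] \;+\; \sum_{i=1}^n [\ind D_{(S^2\times S^2)_i}]\quad \text{in}\quad KO_{\Pin(2)}(T^n).
\]
The first summand is $[\underline{\HH}]$ because the $K3$-factor is a trivial family and $\ind D_{K3}=\HH$ as a $\Pin(2)$-representation (using $\sign(K3)=-16$). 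Each $(S^2\times S^2)$-summand has zero Dirac index on a single fiber; because the Grassmannian of positive subspaces is contractible and the $\Pin(2)$-action factors through $\Pin(2)/S^1=\{\pm1\}$, a small dimensional/parity check forces each of these summands to be zero in $KO_{\Pin(2)}(T^n)$ as well.

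For part (3), which I view as the main obstacle, I would appeal to the families gluing philosophy underlying \cite{BK}. The strategy is to show that the families monopole map for $(X_0,\fs_{X_0})$, after finite-dimensional approximation, is stably $\Pin(2)$-homotopic to the smash product of the classical monopole map of $(K3,\fs_{K3})$ with the ``linearized'' family contribution coming from the $\varrho$-action on $H^+(S^2\times S^2)$ for each circle factor. The required input is twofold: a Bauer--Furuta type gluing formula relating $FSW^{\Z_2}(X_0,\fs_{X_0})$ to $SW(K3,\fs_{K3})\cdot \prod_i (\text{degree of the circle piece})$, and the classical non-vanishing $SW(K3,\fs_{K3})\equiv 1\pmod 2$. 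The circle pieces are elementary once one checks that $\varrho$ contributes a generator of the relevant $S^1$-equivariant stable cohomotopy group over $S^1$, which reduces to the fact that $\varrho$ reverses orientation of $H^+(S^2\times S^2)$ and hence the associated $H^+$-bundle over $S^1$ is $\ell$. The hard part is the gluing formula itself, but for this specific ``connect sum with trivial pieces on the Seiberg--Witten side'' configuration the argument is standard and yields $FSW^{\Z_2}(X_0,\fs_{X_0})=1$.
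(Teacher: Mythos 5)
Your overall route coincides with the paper's: part (1) is \lemref{lem: obstruction vanishes if the supports are disjoint} applied to the disjointly supported $f_i$ (and your observation that $H^1(M_0;\Z/2)=0$ forces every orientation-preserving diffeomorphism to preserve $[\fs_0]$ is a clean way to check that hypothesis); part (2) is an excision/splitting of the families index bundle, which is exactly the content of \lemref{Lemma: get rid of assum on ind}; and part (3) is the gluing formula $FSW^{\Z_2}(X_0,\fs_{X_0})=SW(K3,\fs_0|_{K3})\cdot\langle w_n(\xi_n),[T^n]\rangle$, which the paper simply quotes as Theorem~1.1 of \cite{BK} rather than re-proving. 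Since the proposition is attributed to \cite{BK}, citing that theorem is the intended move; your description of each circle factor contributing $\ell$ because $\varrho$ reverses orientation of $H^+(S^2\times S^2)$ is consistent with $\langle w_n(\xi_n),[T^n]\rangle=1$, and you need not reprove the gluing formula here.

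The one place your justification goes wrong is the vanishing of the $S^2\times S^2$ summands in part (2). The contractibility of the Grassmannian of positive-definite subspaces and the factorization of the $\Pin(2)$-action through $\Pin(2)/S^1=\{\pm1\}$ are facts about the bundle $H^+$, not about the Dirac index bundle: the $\Pin(2)$-action on $\ind D$ is by quaternionic multiplication (the representation $h_1$ of Remark~\ref{rem:rep-decomposition}) and does \emph{not} factor through $\Pin(2)/S^1$, so the ``parity check'' as you state it does not apply to the object in question. The correct reason each circle contribution vanishes is the one used in \lemref{Lemma: get rid of assum on ind}: after excision each piece lies in $KSp(S^1)\otimes R(\Pin(2);\HH)$, and $KSp(S^1)\cong KSp(\mathrm{pt})=\Z$ (equivalently $\widetilde{KSp}(S^1)=0$), so a quaternionic index bundle pulled back from $S^1$ is determined by its rank; the rank of each $S^2\times S^2$ contribution is $-\sign(S^2\times S^2)/16=0$, while the $K3$ contribution has rank $-\sign(K3)/16=1$, giving $[\ind D_0]=[\underline{\HH}]$. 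With that substitution your argument for (2) is exactly the paper's.
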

\begin{proof}
\lemref{lem: obstruction vanishes if the supports are disjoint} implies the assertion (1).
The assertion (2) will be proved by Lemma~\ref{Lemma: get rid of assum on ind} below. %the index theorem and the sum formula of the indices. 
%(See the proof of Lemma~\ref{Lemma: get rid of assum on ind}.)
To prove the assertion (3), we use  Theorem~1.1 of \cite{BK}.
Let $N=n (S^2\times S^2)$, and assume $M_0=K3\# N$.
Let $H^+_N$ be the bundle of $H^+(N)$. 
Then $H^+_N\cong \xi_n$.
By Theorem~1.1 of \cite{BK},
\[
FSW^{\Z_2}(X_0,\fs_{X_0})=SW(K3,\fs_0|_{K3})\cdot\langle w_n(\xi_n), [T^n]\rangle = 1
\]
holds.
\end{proof}

Combining Remark~\ref{Remark: H_{0}}, \thmref{thm:main} and \propref{prop:M0},
we obtain   the following:

\begin{Corollary}\label{cor:cor1}
Let $M$ be a closed smooth spin $4$-manifold  such that we have a ring isomorphism $H^*(M;\Q)\cong H^*(M_0;\Q)$.
Suppose that we have a smooth fiber bundle $X \to T^{n}$ with fiber $M$ and with a global spin structure $\fs_{X}$ modelled on the given spin structure on $M$.
Let $\ind D$ be the Dirac index bundle of $(X,\fs_X)$ and $H^+\to T^n$ be the bundle of $H^+(M)$ associated to $X$.
Suppose that  $[\ind D]=[\underline{\HH}]\in KO_{\Pin(2)}(T^n)$ and $H^+\cong \xi_n\oplus\underline{\R}^3 $. 
Then we have
\[
FSW^{\Z_2}(X,\fs_X) = 1.
\]	
\end{Corollary}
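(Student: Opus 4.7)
The plan is to deduce the corollary by applying the rigidity result \thmref{thm:main} to the pair $(X, \fs_X)$ and the reference family $(X_0, \fs_{X_0})$ of \propref{prop:M0}, and then reading off the value from \propref{prop:M0}(3). So the proof is essentially a bookkeeping exercise, once the hypotheses of \thmref{thm:main} are verified.

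\textbf{Step 1: numerical hypotheses.} I would first check that $(M,\fs)$ and $(M_0,\fs_0)$ satisfy the two bulleted conditions of \thmref{thm:main} with $B = T^n$. The ring isomorphism $H^*(M;\Q) \cong H^*(M_0;\Q)$ immediately gives $b_1(M) = b_1(M_0) = 0$ and identifies the cup-product pairings on $H^2$, hence (up to sign) the intersection forms via Poincar\'e duality. Since $M_0 = K3 \# n(S^2\times S^2)$ has $\sign(M_0) = -16$ and $b_+(M_0) = n+3$, and since $M$ is oriented (being spin), the only choice of orientation that is consistent with the dimension-counting assumption of \thmref{thm:main} gives $\sign(M) = -16$ and $b_+(M) = n+3$. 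Then $b_+(M) = n+3 \geq \dim T^n + 2$, and
\[
-\frac{\sign(M)}{4} - 1 - b_+(M) + \dim T^n = 4 - 1 - (n+3) + n = 0,
\]
and the same equalities hold for $M_0$.

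\textbf{Step 2: equality of $\alpha$-classes.} Next I would verify that $\alpha(X,\fs_X) = \alpha(X_0,\fs_{X_0})$ in $KO_{\Pin(2)}(T^n)$. By hypothesis $[\ind D] = [\underline{\HH}]$, and \propref{prop:M0}(2) gives $[\ind D_0] = [\underline{\HH}]$, so the Dirac parts agree. By hypothesis $H^+ \cong \xi_n \oplus \underline{\R}^3$ as real vector bundles over $T^n$, and \remref{Remark: H_{0}} gives $H^+_0 \cong \xi_n \oplus \underline{\R}^3$; since the $\Pin(2)$-action on $H^+$ is defined uniformly via $\Pin(2) \to \Pin(2)/S^1 = \deux$ and scalar multiplication on the real fibres, an isomorphism as real vector bundles upgrades to an equality of $\Pin(2)$-equivariant $KO$-classes. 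Subtracting,
\[
\alpha(X,\fs_X) = [\ind D] - [H^+] = [\ind D_0] - [H^+_0] = \alpha(X_0,\fs_{X_0}).
\]

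\textbf{Step 3: conclusion.} With the hypotheses of \thmref{thm:main} in hand and $\alpha(X,\fs_X) = \alpha(X_0,\fs_{X_0})$, the rigidity theorem yields
\[
FSW^{\Z_2}(X,\fs_X) = FSW^{\Z_2}(X_0,\fs_{X_0}),
\]
and \propref{prop:M0}(3) identifies the right-hand side with $1$.

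There is no real obstacle in this proof: the content has all been absorbed into \thmref{thm:main} and \propref{prop:M0}. The only point where one has to pay a little attention is matching the orientation on $M$ correctly in Step 1 so that the signature identity of \thmref{thm:main} actually holds; once that is done, the $\Pin(2)$-equivariant $KO$-class comparison in Step 2 is immediate from the structural decomposition recalled in \remref{rem:rep-decomposition}.
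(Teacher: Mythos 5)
Your proposal is correct and follows exactly the route the paper intends: the paper states Corollary~\ref{cor:cor1} as an immediate combination of Remark~\ref{Remark: H_{0}}, Theorem~\ref{thm:main} and Proposition~\ref{prop:M0}, and your three steps simply spell out that combination. The only cosmetic point is that in Step 1 the identities $b_+(M)=n+3$ and $\sign(M)=-16$ are most directly read off from the hypothesis $H^+\cong\xi_n\oplus\underline{\R}^3$ (which fixes the rank of $H^+$) together with $b_2(M)=b_2(M_0)$, rather than by appealing to consistency with the dimension-count; either way the verification goes through.
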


\begin{Remark}
Morgan--Szab\'{o} \cite{MS} proves the rigidity theorem that every homotopy $K3$ surface admits
 a $\Spinc$-structure with trivial determinant line bundle whose Seiberg--Witten invariant is congruent to $1$ modulo $2$. 
\corref{cor:cor1} can be considered as a family version of Morgan--Szab\'{o}'s theorem. 	
\end{Remark}

The following theorem gives a family version of the 10/8-inequality for families with fiber having $\sign=-16$ and $b_1=0$.

\begin{Theorem}
\label{thm:cor2}
Let $M$ be a spin $4$-manifold with $\sign(M)=-16$ and $b_1(M)=0$.
Suppose that we have a smooth fiber bundle $X \to T^{n}$ with fiber $M$ and with a global spin structure $\fs_{X}$ modelled on the given spin structure on $M$.
Let $\ind D$ and $H^+$ be as in \corref{cor:cor1}.
Suppose $[\ind D]=[\underline{\HH}]$ and %$H^+$ is stably equivalent to $H^+_0$, that is, 
there exists a nonnegative integer $a$ such that
\[
H^+ \cong \xi_n\oplus \underline{\R}^a.
\]	
Then 
\[
b_+(M)\geq n+3
\]
holds.
\end{Theorem}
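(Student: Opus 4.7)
The strategy is to stabilize the family by fiberwise connected sum so that Corollary~\ref{cor:cor1} applies, then transfer the resulting non-vanishing back to the original family and apply a family analogue of the Furuta--Kametani argument.

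If $b_+(M) \geq n+3$ already, there is nothing to prove; so suppose $b_+(M) = n + a$ with $a \leq 2$, aiming at a contradiction. Form the fiberwise connected sum $\tilde X \to T^n$ of $X$ with the trivial bundle $T^n \times (S^2 \times S^2)^{\#(3-a)}$, equipped with the induced global spin structure $\fs_{\tilde X}$. The fiber $\tilde M := M \# (3-a)(S^2 \times S^2)$ is simply connected and spin with intersection form $2E_8(-1) \oplus (n+3)H$, hence its rational cohomology ring is isomorphic to that of $K3 \# n(S^2 \times S^2)$. Moreover $b_+(\tilde M) = n+3$; since the Dirac index of $S^2 \times S^2$ vanishes, we still have $[\ind D_{\tilde X}] = [\underline{\HH}] \in KO_{\Pin(2)}(T^n)$; and $H^+(\tilde X) \cong H^+ \oplus \underline{\R}^{3-a} \cong \xi_n \oplus \underline{\R}^3$. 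Applying Corollary~\ref{cor:cor1} to $\tilde X$ gives
\[
FSW^{\Z_2}(\tilde X, \fs_{\tilde X}) = 1.
\]
In particular, the $\Pin(2)$-equivariant Bauer--Furuta class of $\tilde X$ is non-trivial in the corresponding stable cohomotopy group.

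Because the stabilization adds only a trivial summand $\underline{\R}^{3-a}$ to $H^+$ and contributes no new Dirac index, the family monopole map of $\tilde X$ is the $\Pin(2)$-equivariant suspension by $\underline{\R}^{3-a}$ of the one for $X$; hence the Bauer--Furuta class of $(X, \fs_X)$, viewed in $\{T(\underline{\HH} - \xi_n - \underline{\R}^a), S^0\}^{\Pin(2)}_{\mathcal{U}'}$, is itself non-trivial.

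Finally, apply a family version of the $KO$-theoretic argument of \cite{FKM} to this non-trivial $\Pin(2)$-equivariant class. Using the decomposition of $KO_{\Pin(2)}(T^n)$ given in Remark~\ref{rem:rep-decomposition}, Bott periodicity, and the non-triviality of the characteristic class $w_n(\xi_n) \in H^n(T^n;\Z/2)$ (the same class that detects $FSW^{\Z_2}(X_0,\fs_{X_0}) = 1$ in Proposition~\ref{prop:M0}), one extracts a $KO$-theoretic inequality between the quaternionic rank of the domain ($=1$, coming from $[\underline{\HH}]$) and the $\tilde{\R}$-rank of the target ($=n+a$, coming from $\xi_n \oplus \underline{\R}^a$), which forces $a \geq 3$ and contradicts $a \leq 2$. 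The main obstacle is this last step: upgrading Furuta--Kametani's delicate $KO$-counting from the single-manifold case (base a point) to the $\Pin(2)$-equivariant family setting over $T^n$ with the non-trivial twist $\xi_n$, and verifying that $w_n(\xi_n)$ contributes exactly the extra $n$ on the right-hand side of the 10/8-inequality.
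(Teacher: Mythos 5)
Your overall strategy---reduce the case $a<3$ to the case $a=3$, where Corollary~\ref{cor:cor1} gives a non-vanishing, and then derive a contradiction---is the right shape, and the paper's proof is indeed a formal version of exactly this. But as written your argument has two genuine gaps, one in each half.

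First, the transfer step. You claim that the family monopole map of $\tilde X = X\,\#_{\rm fib}\,(3-a)(S^2\times S^2)$ is the $\Pin(2)$-equivariant \emph{suspension} by $\underline{\R}^{3-a}$ of that of $X$, and you use this to pull the non-triviality of the Bauer--Furuta class of $\tilde X$ back to $X$. This is not correct: even for a single $4$-manifold, connected sum with $S^2\times S^2$ corresponds (via Bauer's gluing theorem) to smashing with the Bauer--Furuta class of $S^2\times S^2$, which is the \emph{inclusion} $S^0\hookrightarrow S^{\tilde{\R}}$, not the identity of $S^{\tilde\R}$; i.e.\ it is post-composition $\iota\colon \{TF,S^V\}\to\{TF,S^{V\oplus\tilde\R^{3-a}}\}$, not a suspension isomorphism. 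More seriously, in the families setting the identification $\mathrm{BF}(\tilde X)=\iota(\mathrm{BF}(X))$ requires a families gluing formula for Bauer--Furuta classes, which the paper does not have (the introduction explicitly lists ``a Bauer--Furuta version of the gluing result in \cite{BK}'' as unestablished; Proposition~\ref{prop:M0} only uses the gluing formula for the numerical invariant $FSW^{\Z_2}$).

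Second, and more importantly, the step you yourself flag as ``the main obstacle'' is where the entire content of the theorem lies, and the ``$KO$-theoretic counting'' you gesture at is not how it works. The paper's mechanism is the following. One never needs the suspended class $\iota_0(\beta')$ to be realized by any actual family: Lemma~\ref{lem:forget} (rigidity) says that $\deg\circ\varphi$ is \emph{constant} on the whole group $\{TF,S^{V\oplus\tilde\R^{3-a}}\}^{\Pin(2)}_{\mathcal{U}'}$, with value determined solely by the $KO_{\Pin(2)}$-class $[F]-[V\oplus\tilde\R^{3-a}]$, which by hypothesis equals $\alpha_0=[\ind D_0]-[H_0^+]$ of the model family $X_0$; Proposition~\ref{prop:M0} then pins that constant to be $1$. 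The contradiction comes from factoring the other way around the commuting square: the inclusion $S^V\hookrightarrow S^{V\oplus\R^{3-a}}$ is $S^1$-equivariantly null-homotopic because $S^1$ acts trivially on $\R^{3-a}$ and $3-a\geq 1$, so $\deg\circ\iota_1\circ\varphi_1\equiv 0$. This two-line clash ($1=0$) replaces both your geometric stabilization and your proposed family version of the Furuta--Kametani $KO$-count; there is no inequality between quaternionic and $\tilde\R$-ranks to extract, and $w_n(\xi_n)$ enters only through Proposition~\ref{prop:M0}. To repair your proof you would either have to prove the families Bauer--Furuta gluing formula, or (better) discard the geometric stabilization and argue with the formal suspension $\iota_0$ together with Lemma~\ref{lem:forget} and the $S^1$-null-homotopy, which is exactly the paper's proof.
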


\begin{proof}
The proof is parallel to an argument in Proposition~2 of \cite{FKM}.
First, it follows from the assumption on $H^{+}$ that $b_+(M)\geq n$.
Suppose $n\leq b_+(M)< n+3$.
Then we have $0\leq a < 3$.
Let $\ind D_0$ and $H_0^+$ be as in \propref{prop:M0} and \remref{Remark: H_{0}}.
Choose a vector bundle $\xi^\prime$ over $T^n$ so that 
\[
-[\xi_n] = [\xi^\prime] - [\underline{\R}^l]\text{ in } KO_{S^1}(T^n).
\]
Then, for some nonnegative integers $x$, $y$, we have
\begin{align*}
[\ind D] -[H^+] &= [\underline{\HH}^{x+1}\oplus \underline{\R}^y\oplus\xi^\prime] - [\underline{\HH}^{x}\oplus \underline{\R}^{y+l+a}],\\
[\ind D_0] -[H^+_0] &= [\underline{\HH}^{x+1}\oplus \underline{\R}^y\oplus\xi^\prime] - [\underline{\HH}^{x}\oplus \underline{\R}^{y+l+3}].	\end{align*}
Let us consider the following commutative diagram:
\[
\xymatrix@C=0pt{ 
& \{TF,S^{V\oplus\tilde{\R}^{3-a}}\}_{\mathcal{U}^\prime}^{\Pin(2)} \ar[rd]^{\varphi_{0}}\\
\{TF,S^{V}\}_{\mathcal{U}^\prime}^{\Pin(2)}\ar[ru]^{\iota_0}\ar[rd]_{\varphi_{1}}& &\{TF,S^{V\oplus\R^{3-a}}\}_{\mathcal{U}}^{S^{1}}&\overset{\deg}{\longrightarrow}&\Z_{2}.\\
& \{TF,S^{V}\}_{\mathcal{U}}^{S^{1}} \ar[ru]_{\iota_1}}
\]
Here $\iota_{i}$ are induced from the inclusion $V \hookrightarrow V\oplus \R^{3-a}$, and $\varphi_{i}$ are the forgetful maps restricting the $\Pin(2)$-actions to the $S^{1}$-actions.
We shall compare two compositions $\deg \circ \varphi_{0} \circ \iota_{0}$ and 
$\deg \circ \iota_{1} \circ \varphi_{1}$ in the diagram.

\propref{prop:M0} and \lemref{lem:forget} imply that the image of the composition $\deg\circ\varphi_0$ is $\{1\}$ and therefore the image of $\deg\circ\varphi_0\circ\iota_0$ is also $\{1\}$.

On the other hand, $S^V$ is $S^{1}$-equivariantly contractible in $S^{V\oplus \R^{3-a}}$, since we assumed that $a<3$ and the $S^{1}$-action on $S^{\R^{3-a}}$ is trivial.
Therefore the image of the composition $\deg \circ \iota_{1} \circ \phi_{1}$ should be $\{0\}$.
This contradicts the commutativity of the diagram.
\end{proof}

In Lemma~\ref{Lemma: get rid of assum on ind}, shown in Section~\ref{section: Calculation of the index bundle}, we shall give a way to replace the assumption that $[\ind{D}]=[\underline{\HH}]$ in Corollary~\ref{cor:cor1} and Theorem~\ref{thm:cor2} with a more geometric condition.
Let us combine Lemma~\ref{Lemma: get rid of assum on ind} with Corollary~\ref{cor:cor1} and Theorem~\ref{thm:cor2} here.

\begin{Corollary}
\label{cor: cor1 revised}
Let $(M,\fs)$ be an oriented closed smooth spin $4$-manifold  with $H^*(M;\Q)\cong H^*(M_0;\Q)$ and $f_1,\ldots,f_n$ be diffeomorphisms on $M$.
Suppose that each of $f_{i}$ preserves $\fs$ and that $\supp{f}_{1}, \ldots, \supp{f}_{n}$ are mutually disjoint.
Then, by Lemma~\ref{lem: obstruction vanishes if the supports are disjoint}, there exist lifts of $f_{1}, \ldots, f_{n}$ to the spin structure.
Fix such lifts and form the spin mapping torus $(X,\fs_X)$.
Let $H^+\to T^n$ be the bundle of $H^+(M)$ for $X$.
Suppose that $H^+\cong \xi_n\oplus\underline{\R}^3$.
Then we have
\[
FSW^{\Z_2}(X,\fs_X) = 1.
\]
\end{Corollary}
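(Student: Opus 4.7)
The plan is to reduce the corollary to \corref{cor:cor1} in a single step, using the forthcoming \lemref{Lemma: get rid of assum on ind} to eliminate the one remaining hypothesis.

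All of the other ingredients of \corref{cor:cor1} are already in place within the setup of the corollary itself: the disjoint-support hypothesis together with \lemref{lem: obstruction vanishes if the supports are disjoint} produces commuting spin lifts $\hat{f}_1, \ldots, \hat{f}_n$ and hence the spin mapping torus $(X, \fs_X) \to T^n$; the rational cohomological equivalence $H^*(M; \Q) \cong H^*(M_0; \Q)$ is part of the assumption; and the splitting $H^+ \cong \xi_n \oplus \underline{\R}^3$ of the bundle of self-dual harmonic forms is also assumed. Thus the only condition of \corref{cor:cor1} that I would still need to verify is the triviality $[\ind D] = [\underline{\HH}]$ in $KO_{\Pin(2)}(T^n)$. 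This is precisely what \lemref{Lemma: get rid of assum on ind} is designed to yield: it supplies a geometric sufficient condition -- a spin mapping torus constructed from diffeomorphisms with mutually disjoint supports on a spin $4$-manifold with the appropriate numerical invariants -- and our setup matches that condition by construction. Once the triviality is in hand, \corref{cor:cor1} delivers $FSW^{\Z_2}(X, \fs_X) = 1$ immediately.

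The genuine obstacle is \lemref{Lemma: get rid of assum on ind} itself, whose proof will be the central computation of Section~\ref{section: Calculation of the index bundle} and which this corollary treats as a black box. Heuristically, one expects the disjoint-support hypothesis to allow each lift $\hat{f}_i$ to be chosen so that it acts as the identity on spinors outside a neighborhood of $\supp f_i$ (compare the construction in the proof of \lemref{lem: obstruction vanishes if the supports are disjoint}); the family of Dirac operators is then locally a product away from these supports, and the $\Pin(2)$-equivariant, fiberwise quaternionic, virtual rank-one index bundle over $T^n$ should be forced to be stably trivial, i.e., equal to $[\underline{\HH}]$ in $KO_{\Pin(2)}(T^n)$. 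With that input, the present corollary is just a concatenation of \lemref{Lemma: get rid of assum on ind} and \corref{cor:cor1}, and no further gauge-theoretic argument is needed.
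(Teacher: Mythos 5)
Your proposal is correct and follows essentially the same route as the paper: invoke Lemma~\ref{Lemma: get rid of assum on ind} to trivialize the index bundle and then apply Corollary~\ref{cor:cor1}. The only detail worth making explicit (the paper does) is that the lemma only gives triviality of $\ind D$ or $-\ind D$, so one must add the index-theorem computation $\ind_{\C}D=-\sign(M)/8=2$ to pin down $[\ind D]=[\underline{\HH}]$.
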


\begin{proof}
By Lemma~\ref{Lemma: get rid of assum on ind}, either 
$\ind{D}$ or $-\ind{D}$ is represented by a trivial bundle, and $\ind_{\C}{D}=-\sign(M)/8=2$ by the index theorem.
Therefore the assertion of the corollary follows from Corollary~\ref{cor:cor1}.
\end{proof}

\begin{Corollary}
\label{cor: cor2 revised}
Let $(M,\fs)$ be an oriented closed spin smooth $4$-manifold with $\sign(M)=-16$ and $b_1(M)=0$ and $f_1,\ldots,f_n$ be diffeomorphisms on $M$.
Suppose that each of $f_1,\ldots,f_n$ preserves $\fs$ and that $\supp{f}_{1}, \ldots, \supp{f}_{n}$ are mutually disjoint.
Let $H^+\to T^n$ be the bundle of $H^+(M)$ associated with $f_{1}, \ldots, f_{n}$.
Suppose that there exist a nonnegative integer $a$ such that
$H^+ \cong \xi_n\oplus \underline{\R}^a$.
Then we have
\[
b_+(M)\geq n+3.
\]
\end{Corollary}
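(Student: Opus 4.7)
The plan is to reduce this to Theorem~\ref{thm:cor2} by supplying the two ingredients that that theorem requires beyond the hypothesis on $H^+$: a well-defined smooth family equipped with a global spin structure modelled on $\fs$, and the identification $[\ind D] = [\underline{\HH}]$ in $KO_{\Pin(2)}(T^n)$. This is precisely the strategy already used to deduce Corollary~\ref{cor: cor1 revised} from Corollary~\ref{cor:cor1}, so the only new step is running the same trick through Theorem~\ref{thm:cor2} rather than through Corollary~\ref{cor:cor1}.

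First, I would apply Lemma~\ref{lem: obstruction vanishes if the supports are disjoint}. Since each $f_i$ preserves the orientation and the spin structure $\fs$ and the supports $\supp f_1,\ldots,\supp f_n$ are pairwise disjoint, that lemma gives mutually commuting lifts $\hat f_1,\ldots,\hat f_n\in\Aut(M,\fs)$ and in particular shows that $\{f_1,\ldots,f_n\}$ is spin commuting. Fixing such a choice produces the spin mapping torus $(X,\fs_X)\to T^n$, and by construction the bundle $H^+\to T^n$ in the hypothesis coincides with the $H^+$-bundle of this family.

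Next, I would invoke Lemma~\ref{Lemma: get rid of assum on ind} (proved in Section~\ref{section: Calculation of the index bundle}); this is exactly the lemma advertised just before the statement of the corollary as replacing the abstract assumption $[\ind D]=[\underline{\HH}]$ by the geometric disjoint-support condition. It asserts that either $\ind D$ or $-\ind D$ is represented by a trivial $\Pin(2)$-equivariant bundle in $KO_{\Pin(2)}(T^n)$. The sign is then pinned down by the Atiyah--Singer index theorem applied fiberwise: the complex index is $-\sign(M)/8=2$, the complex rank of $\underline{\HH}$, and in particular is positive, so $[\ind D]=[\underline{\HH}]$.

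At this point all the hypotheses of Theorem~\ref{thm:cor2} are in place for $(X,\fs_X)$: namely $\sign(M)=-16$, $b_1(M)=0$, $[\ind D]=[\underline{\HH}]$, and $H^+\cong\xi_n\oplus\underline{\R}^a$. Applying Theorem~\ref{thm:cor2} directly yields the desired inequality $b_+(M)\geq n+3$. The only step where genuine content is hidden is the appeal to Lemma~\ref{Lemma: get rid of assum on ind}, whose proof (based on Novikov-type topological invariance results recalled in Section~\ref{section: Calculation of the index bundle}) is where the substance lies; once that lemma is granted, the present argument is essentially formal.
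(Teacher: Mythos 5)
Your proof is correct and follows exactly the paper's route: Lemma~\ref{lem: obstruction vanishes if the supports are disjoint} to form the spin mapping torus, Lemma~\ref{Lemma: get rid of assum on ind} plus the index theorem (complex index $-\sign(M)/8=2>0$) to pin down $[\ind D]=[\underline{\HH}]$, and then Theorem~\ref{thm:cor2}. One small misattribution in your closing remark: the proof of Lemma~\ref{Lemma: get rid of assum on ind} rests on the excision formula for indices of families and $KSp(S^1)\cong\Z$, not on Novikov-type topological invariance --- the latter is the substance of Lemma~\ref{lem: Novikov theorem type argument}, which is needed only in the purely topological setting of Theorem~\ref{theo: application to nonsmoothable family}.
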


\begin{proof}
This follows from Lemma~\ref{lem: obstruction vanishes if the supports are disjoint}, Theorem~\ref{thm:cor2}, Lemma~\ref{Lemma: get rid of assum on ind} and the index theorem as well as the proof of Corollary~\ref{cor: cor1 revised}.
\end{proof}

\section{Applications}
\label{section: Applications}

In this section, we shall give two topological applications of our main results in the previous section.
The first application is to detect non-smoothable actions on $4$-manifolds.
The second is to detect non-smoothable families.
We note that most $4$-manifolds $M$ appearing in this section have non-zero signature, and 
 for such $M$,
we have  $\Diff^{+}(M)=\Diff(M)$ and $\Homeo^{+}(M) = \Homeo(M)$.

Let us denote by $-E_{8}$ the (unique) closed simply connected oriented topological 4-manifold whose intersection form is the negative definite $E_{8}$-lattice \cite[Theorem~1.7]{MR679066}.
In Theorem~\ref{theo: application to nonsmoothable action},
for any $m \geq 3$,
we construct non-smoothable $\Z^{m-2}$-actions
on the topological $4$-manifold $2(-E_{8}) \# m S^{2}\times S^{2}$.
Notice that the $4$-manifold $2(-E_{8}) \# m S^{2}\times S^{2}$ is homeomorphic to $K3\#(m-3)S^{2} \times S^{2}$  and hence  admits a smooth structure.

\begin{Theorem}
\label{theo: application to nonsmoothable action}
Let $m \geq 3$.
Then the topological  (but smoothable) $4$-manifold $M$ defined by
\[
M = 2(-E_{8}) \# m S^{2}\times S^{2}
\]
admits commuting self-homeomorphisms $f_{1}, \ldots, f_{m}$ with the following properties:
\begin{itemize}
\item[$\bullet$] For any distinct numbers $i_{1},\ldots, i_{m-3} \in \{1, \ldots, m\}$, there exists a smooth structure on $M$ for which $f_{i_{1}}, \ldots, f_{i_{m-3}}$ are diffeomorphisms.
\item[$\bullet$] For any distinct numbers $i_{1},\ldots, i_{m-2} \in \{1, \ldots, m\}$, 
there exists no smooth structure on $M$ for which all of 
$f_{i_{1}}, \ldots, f_{i_{m-2}}$ are diffeomorphisms.
\end{itemize}
\end{Theorem}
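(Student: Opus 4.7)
The plan is to construct the $f_i$'s topologically using the topological connect-sum decomposition $M=2(-E_8)\#m(S^2\times S^2)$, and then to apply the family $10/8$-type inequality Corollary~\ref{cor: cor2 revised} to obstruct smoothing any $m-2$ of them. Fix such a decomposition of $M$ and, using the smooth diffeomorphism $\varrho$ of $S^2\times S^2$ constructed in the text just before Remark~5.4 (orientation-preserving, spin-preserving, acting as $-\mathrm{id}$ on $H^+(S^2\times S^2)$, and equal to the identity on a $4$-ball), define $f_i:M\to M$ to be $\varrho$ on the $i$-th topological $S^2\times S^2$ summand and the identity elsewhere. Then $f_1,\ldots,f_m$ have mutually disjoint supports (in particular they commute), preserve the orientation of $M$, and preserve the unique topological spin structure $\mathfrak{s}$ on $M$ (uniqueness is automatic from $H^1(M;\Z/2)=0$).

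For the first bullet, fix $I=\{i_1,\ldots,i_{m-3}\}\subset\{1,\ldots,m\}$ and use the topological identification $2(-E_8)\#3(S^2\times S^2)\approx_{\mathrm{top}}K3$ to rewrite
\[
M\approx_{\mathrm{top}} K3\#(S^2\times S^2)_{i_1}\#\cdots\#(S^2\times S^2)_{i_{m-3}},
\]
choosing a homeomorphism that sends the specified $i_j$-th topological summand of $M$ onto the $j$-th smooth $S^2\times S^2$ summand on the right. Pulling back the standard smooth structure along this homeomorphism puts a smooth structure on $M$ in which $f_{i_1},\ldots,f_{i_{m-3}}$ are realized as $\varrho$ acting on standard smooth $S^2\times S^2$ factors, hence are smooth diffeomorphisms.

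For the second bullet, suppose for contradiction that $J=\{j_1,\ldots,j_{m-2}\}$ and some smooth structure on $M$ make all $f_{j_k}$ diffeomorphisms. Then $\sign(M)=-16$, $b_1(M)=0$, each $f_{j_k}$ preserves $\mathfrak{s}$ (again by uniqueness of the spin structure), and the $\supp f_{j_k}$ remain pairwise disjoint. The $H^+$ bundle over $T^{m-2}$ for the multiple mapping torus depends only on the induced action on $H^2(M;\R)$; since each $f_{j_k}$ acts as $-1$ on the rank-$1$ positive-definite summand coming from the $j_k$-th $S^2\times S^2$ and trivially on the remaining two positive-definite summands, we obtain
\[
H^+\cong \xi_{m-2}\oplus\underline{\R}^{2}.
\]
Corollary~\ref{cor: cor2 revised} then forces $b_+(M)\geq (m-2)+3=m+1$, contradicting $b_+(M)=m$.

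The step I expect to be the main obstacle is the smoothability half: one must argue carefully that the topological identification $M\approx_{\mathrm{top}} K3\#(m-3)(S^2\times S^2)$ can be chosen to route any prescribed subset of $m-3$ topological $S^2\times S^2$ summands onto the smooth $S^2\times S^2$ summands on the right-hand side. This is a standard application of the fact that a homeomorphism may permute and absorb summands topologically, but it is the only place where a genuine geometric choice intervenes; once it is in place, the verification of the hypotheses of Corollary~\ref{cor: cor2 revised} and the computation of $H^+$ are purely topological and follow from the disjointness of the supports.
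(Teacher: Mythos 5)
Your proposal is correct and follows essentially the same route as the paper: the same homeomorphisms $f_{i}$ built from $\varrho$ with disjoint supports, the same application of Corollary~\ref{cor: cor2 revised} with $H^{+}\cong\xi_{m-2}\oplus\underline{\R}^{2}$ to rule out smoothing any $m-2$ of them, and the same strategy of pulling back the smooth structure of $K3\#(m-3)S^{2}\times S^{2}$ for the positive half. The one step you flag as the main obstacle---arranging the homeomorphism $M\to K3\#(m-3)S^{2}\times S^{2}$ so that the designated topological $S^{2}\times S^{2}$ summands land on smooth ones---is exactly where the paper does its work: it applies Freedman's theorem to the complementary piece $2(-E_{8})\#3(S^{2}\times S^{2})\cong K3$ and then uses Quinn's $4$-dimensional annulus theorem to ambiently isotope the connect-sum balls onto smoothly embedded balls before extending by the identity over the remaining summands.
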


\begin{proof}
Let us write the connected sum components of $mS^{2} \times S^{2}$ as
\[
mS^{2} \times S^{2} = \#_{i=1}^{m} (S^{2} \times S^{2}) = \#_{i=1}^{m} N_{i}.
\]
For each $i \in \{1, \ldots,m\}$, let
\[
f_{i} : N_{i} \to N_{i}
\]
be an orientation-preserving self-diffeomorphism given by a copy on $N_{i}$ of $\varrho : S^{2} \times S^{2} \to S^{2} \times S^{2}$ given in Section~\ref{section: Main Theorem}.
Since $f_{i}$ has a fixed ball, we can extend $f_{i}$ as a self-homeomorphism onto $M$ by the identity map outside $N_{i}$.
Let us write $f_{i} : M \to M$ also for the extended self-homeomorphism.
Note that obviously $\supp{f}_{1}, \ldots, \supp{f}_{m}$ are mutually disjoint.

We first show that, for any distinct numbers $i_{1},\ldots, i_{m-3} \in \{1, \ldots, m\}$, there exists a smooth structure on $M$ such that $f_{i_{1}}, \ldots, f_{i_{m-3}}$ are diffeomorphisms with respect to the smooth structure.
For simplicity of notation, let us consider the case that $i_{1}=1, \ldots, i_{m-3}=m-3$.
First, note that Freedman's theorem (see for example \cite{MR1201584}) implies that there exists a homeomorphism 
\[
\varphi : 2(-E_{8}) \#_{i=m-2}^{m}N_{i} \to K3.
\]
For $j \in \{1, \ldots, m-3\}$, denote by $B_{j}$ the (topologically) embedded $4$-ball in $2(-E_{8}) \#_{i=m-2}^{m}N_{i}$ which was used to define the extension of $f_{j} : N_{j} \to N_{j}$ onto $M$.
Let $B'_{1}, \ldots, B'_{m-3} \subset K3$ be smoothly embedded disjoint $4$-balls.
%%%%%%%%%%%%% revised by Nakamura, 4/20
%Let $\psi$ be a self-homeomorphism on $2(-E_{8}) \#_{i=m-2}^{m}N_{i}$ which maps $B_{j}$ to $\varphi^{-1}(B'_{j})$ for all $j \in \{1, \ldots, m-3\}$.
We can construct a self-homeomorphism $\psi$ on $2(-E_{8}) \#_{i=m-2}^{m}N_{i}$ which maps $B_{j}$ to $\varphi^{-1}(B'_{j})$ for all $j \in \{1, \ldots, m-3\}$:
by taking a suitable isotopy, we may assume each $\varphi^{-1}(B'_{j})$ is contained in the interior of $B_{j}$. 
Since the boundary spheres of $B_{j}$ and $\varphi^{-1}(B'_{j})$ are locally-flatly embedded, the annulus theorem \cite{MR679069} implies that $\overline{B_{j}\setminus\varphi^{-1}(B'_{j})}$ is homeomorphic to $S^3\times I$.
Then  we can find an ambient isotopy which moves $B_{j}$ to $\varphi^{-1}(B'_{j})$.
%%%%%%%%%%%%%
We can extend the homeomorphism
\[
\varphi \circ \psi : 2(-E_{8}) \#_{i=m-2}^{m}N_{i} \to K3
\]
to a homeomorphism
\[
\phi : M \to K3\#(m-3)S^{2} \times S^{2}
\]
by forming the connected sum along $B_{1}, \ldots, B_{m-3}$ and $B'_{1}, \ldots, B'_{m-3}$ with the identity map on the $(m-3)$-copies of $S^{2} \times S^{2}$.
By construction, the composition
\[
\phi \circ f_{j} \circ \phi^{-1} : K3\#(m-3)S^{2} \times S^{2} \to K3\#(m-3)S^{2} \times S^{2}.
\]
is obviously a diffeomorphism for any $j \in \{1, \ldots, m-3\}$.
This means that $f_{j}$ is a diffeomorphism on $M$ equipped with the smooth structure of $K3\#(m-3)S^{2} \times S^{2}$ via $\phi$.

The rest task is to show that $f_{i_{1}}, \ldots, f_{i_{m-2}}$ are not smoothable at the same time for distinct numbers $i_{1},\ldots, i_{m-2} \in \{1, \ldots, m\}$.
Set $n=m-2$.
Assume that $f_{i_{1}}, \ldots, f_{i_{n}}$ are diffeomorphisms for some smooth structure on $M$.
Let $H^+\to T^n$ be the bundle of $H^+(M)$ associated with $f_{i_{1}}, \ldots, f_{i_{n}}$.
For each $k \in \{1,\ldots,n\}$, the diffeomorphism $f_{i_{k}}$ reverses the orientation of $H^{+}$ for the $i_{k}$-th component of $S^{2} \times S^{2}$ of
$2(-E_{8}) \# m S^{2}\times S^{2}$ and $f_{i_{k}}$ acts trivially on $H^{+}$ for the rest connected sum component.
Thus we have $H^{+} \cong \xi_{n} \oplus \underline{\R}^{2}$, and therefore we can apply Corollary~\ref{cor: cor2 revised} to $f_{i_{1}}, \ldots, f_{i_{n}}$.
It follows from this corollary that $b_{+}(M) \geq n+3 = m+1$, but obviously $b_{+}(M) = m$.
This is a contradiction.

This completes the proof of Theorem~\ref{theo: application to nonsmoothable action}. 
\end{proof}

\begin{Remark}
\label{rem: precious works on nonsmoothable actions}
Non-smoothable actions have been studied by many authors, but for groups having several generators, there is only little previous work.
Here we explain such work briefly and compare it with Theorem~\ref{theo: application to nonsmoothable action}.
The third author~\cite{MR2644908} constructed a non-smoothable $\Z^{2}$-action on the connected sum of an Enriques surface and $S^{2} \times S^{2}$.
Y. Kato~\cite{Kato} constructed a non-smoothable $(\Z/2)^{2}$-actions on certain spin $4$-manifolds with $|\sign| \geq 64$.
D.~Baraglia~\cite{Baraglia} constructed $\Z^{2}$-actions and $(\Z/2)^{2}$-actions on certain non-spin $4$-manifolds.
In these results, each of generators of $\Z^{2}$ or $(\Z/2)^{2}$ can be realized as a smooth diffeomorphism for some smooth structure, so they are similar to Theorem~\ref{theo: application to nonsmoothable action} in this sense.
However, on the other hand, Theorem~\ref{theo: application to nonsmoothable action} provides a non-smootable $\Z^{n}$-action for all $n \geq 2$ and the $4$-manifold acted by $\Z^{n}$ is different from that in all of the work explained in this Remark.
\end{Remark}

Let $M$ be an oriented topological (but smoothable) manifold, 
$B$ be a smooth manifold, and $M \to X \to B$ be a fiber bundle whose structure group is $\operatorname{Homeo}(M)$.

We say that the bundle $X$ is {\it smoothable as a family} or $X$ has {\it a smooth reduction}, if there exists a smooth structure on $M$ such that there is a reduction of the structure group of $X$ to $\operatorname{Diff}(M)$ with respect to the smooth structure via the inclusion $\operatorname{Diff}(M) \hookrightarrow \operatorname{Homeo}(M)$.
If $X$ is not smoothable as a family, we say that $X$ is {\it non-smoothable as a family} or $X$ has {\it no smooth reduction}.

\begin{Remark}
\label{rem: extention of non-smoothable family}
For a topological fiber bundle  $X \to B \times B'$,
if the restriction $X|_{B} \to B$ is non-smoothable as a family, then so is $X\to B \times B'$.
\end{Remark}

In Theorem~\ref{theo: application to nonsmoothable family}, 
we shall construct a non-smoothable family whose fiber is the topological $4$-manifold
$
2(-E_{8}) \# m S^{2}\times S^{2}$.
Here we use the following notation.
Set $[m]=\{1,2,\ldots,m\}$.
For the $m$-torus $T^{m}$ and a subset $I=\{i_{1},\ldots,i_{k}\} \subset [m]$ with cardinality $k$,
denote by 
$
T^{k}_{I}
$
the embedded $k$-torus in $T^{m}$ defined as the product of the $i_{1}, \ldots, i_{k}$-th $S^{1}$-components.

\begin{Theorem} \label{theo: application to nonsmoothable family}
	Let $3\leq m \leq 6$.
	Let $M$ be the topological (but smoothable) $4$-manifold defined by
	\[
	M = 2(-E_{8}) \# m S^{2}\times S^{2}.
	\]
	Then there exists a $\operatorname{Homeo}(M)$-bundle \[
	M \to X \to T^{m}
\]
over the $m$-torus with the following properties: 
	let $I=\{i_{1},\ldots,i_{k}\} \subset [m]$ be a subset with cardinality $k$.
	\begin{itemize}
		\item[$\bullet$] The total space $X$ admits a smooth manifold structure.
		\item[$\bullet$] If $k\leq m-3$,  
		the restricted family
		\[
		X|_{T^{k}_{I}} \to T^{k}_{I}
		\]
		has a reduction to $\Diff(M)$ for some smooth structure on $M$. 
		\item[$\bullet$] If $m-2\leq k\leq m$, 
		the restricted family, 
		\[
		X|_{T^{k}_{I}} \to T^{k}_{I}
		\]
		has no reduction to $\Diff(M)$ for any smooth structure on $M$.
	\end{itemize}
\end{Theorem}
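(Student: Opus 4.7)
The plan is to take $X \to T^m$ to be the multiple mapping torus of the commuting self-homeomorphisms $f_1,\ldots,f_m$ of $M$ constructed in the proof of Theorem~\ref{theo: application to nonsmoothable action}. These maps pairwise commute, so they yield a homomorphism $\Z^m \to \Homeo(M)$ and hence an $M$-bundle over $T^m = B\Z^m$; by construction, for any subset $I=\{i_1,\ldots,i_k\} \subset \{1,\ldots,m\}$ the restriction $X|_{T^k_I}$ is the multiple mapping torus of the subfamily $\{f_{i_1},\ldots,f_{i_k}\}$. The assertion that the total space $X$ is smoothable as a manifold will be addressed separately in Section~\ref{section: Smoothing of the total spaces} via Kirby--Siebenmann theory, using the specific disjoint-support structure of the $f_i$.

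For the second bullet, fix $I$ with $|I|=k\leq m-3$. The first conclusion of Theorem~\ref{theo: application to nonsmoothable action} provides a smooth structure on $M$ for which all the $f_{i_j}$ are diffeomorphisms; since commutativity is the same question in either category, the multiple mapping torus of the $f_{i_j}$ in the smooth category is a $\Diff(M)$-bundle with the same underlying topological bundle as $X|_{T^k_I}$, giving the required reduction.

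For the third bullet, by Remark~\ref{rem: extention of non-smoothable family} it suffices to treat $k=m-2$ (any $T^{k}_I$ with $k\geq m-2$ contains a $T^{m-2}_J$). Suppose for contradiction that $X|_{T^{m-2}_I}$ admits a reduction to $\Diff(M_\sigma)$ for some smooth structure $\sigma$ on $M$, yielding a smooth bundle with fiber $M_\sigma$. I plan to apply Theorem~\ref{thm:cor2} and derive a contradiction. The conditions $\sign(M)=-16$ and $b_1(M)=0$ are topological invariants, and so persist for $M_\sigma$. A global spin structure exists because the obstruction $O(f_{i_1},\ldots,f_{i_{m-2}}) \in H^2(T^{m-2}_I;\Z/2)$ is topological in nature and vanishes by Lemma~\ref{lem: obstruction vanishes if the supports are disjoint top version}, the supports being pairwise disjoint. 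The $H^+$ bundle is determined by the $\Homeo(M)$-action on $H^2(M;\R)$: each $f_{i_j}$ reverses the orientation of $H^+(N_{i_j})$ and fixes the other summands of $H^+(M)=\bigoplus_{j=1}^m H^+(N_j)$, so $H^+\cong \xi_{m-2}\oplus \underline{\R}^2$. The identification $[\ind D]=[\underline{\HH}]$ in $KO_{\Pin(2)}(T^{m-2}_I)$ is the key remaining input: one must compute the Dirac index bundle of a smooth reduction of a topologically defined family, which will be handled in Section~\ref{section: Calculation of the index bundle} via a variant of Novikov's theorem on the topological invariance of rational Pontrjagin classes. Granting these, Theorem~\ref{thm:cor2} yields $b_+(M_\sigma)\geq (m-2)+3=m+1$, contradicting $b_+(M_\sigma)=b_+(M)=m$. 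The two principal obstacles, namely the smoothability of the total space and the computation of $[\ind D]$ for arbitrary smooth reductions, are of Kirby--Siebenmann and Novikov type respectively, and are deferred to Sections~\ref{section: Smoothing of the total spaces} and \ref{section: Calculation of the index bundle}.
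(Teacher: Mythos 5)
Your proposal is correct and follows essentially the same route as the paper: the same multiple mapping torus of the $f_i$ from Theorem~\ref{theo: application to nonsmoothable action}, the same deferral of total-space smoothability to Section~\ref{section: Smoothing of the total spaces} and of the index-bundle triviality to Section~\ref{section: Calculation of the index bundle}, and the same contradiction via Theorem~\ref{thm:cor2} using $H^{+}\cong \xi_{k}\oplus\underline{\R}^{m-k}$. The only cosmetic difference is that you restrict attention to $k=m-2$ exactly, whereas the paper treats $m-2\leq k\leq 4$ directly and invokes Remark~\ref{rem: extention of non-smoothable family} for larger $k$; since $m\leq 6$ forces $m-2\leq 4$, both reductions land in the range where the index computations of Section~\ref{section: Calculation of the index bundle} apply.
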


\begin{proof}
Let $f_{1}, \ldots, f_{m}$ be the commuting self-homeomorphisms on $M$ constructed in the proof of Theorem~\ref{theo: application to nonsmoothable action}.
Let $M \to X \to T^{m}$ be the multiple mapping torus for $f_{1}, \ldots, f_{m}$.
Then $X$ is a $\Homeo(M)$-bundle.
Note that, because of Lemma~\ref{lem: obstruction vanishes if the supports are disjoint top version},
there exists a global topological spin structure on the bundle $X$.

First, smoothability of $X$ as a manifold will be verified 
in Proposition~\ref{prop: smoothing of the total spaces}
in Section~\ref{section: Smoothing of the total spaces}.

Second, we shall verify by contradiction that $X|_{T^{k}_{I}} \to T^{k}_{I}$ has no reduction to $\Diff(M)$ for any smooth structure on $M$ if $I=\{i_{1},\ldots, i_{k}\}$ with  $k\geq m-2$.
We shall show the non-smoothability for $m-2\leq k\leq 4$, but this is enough also for general $k \geq m-2$ by Remark~\ref{rem: extention of non-smoothable family}.
Assume that $X|_{T^{k}_{I}}$ could be  smoothable as a family for some smooth structure on $M$.
Then the global topological structure induces a global spin structure and 
 we have the family of Dirac operators $\ind{D}$ associated with $X|_{T^{k}_{I}}$.
We shall show \lemref{lem: dim of base is less than 3} and 
Lemma~\ref{lem: Novikov theorem type argument}  in Section~\ref{section: Calculation of the index bundle}, and they ensure triviality  $\ind{D}=[\underline{\HH}]$.
%(In the case that $m=4,5$, this also follows from Lemma~\ref{lem: dim of base is less than 3}.)
Moreover, the bundle of $H^{+}$ associated with $X|_{T^{k}_{I}}$ satisfies that 
$H^{+} \cong \xi_{k} \oplus \underline{\R}^{a}$ for $a=m-k$, as explained in the proof of Theorem~\ref{theo: application to nonsmoothable action}.
Therefore we can apply Theorem~\ref{thm:cor2} to $X|_{T^{k}_{I}} \to T^{k}_{I}$, and the inequality $b_{+}(M) \geq k+3\geq m+1$ should hold, but obviously $b_{+}(M) = m$.
This is a contradiction.

Lastly let us  check that $X|_{T^{k}_{I}} \to T^{k}_{I}$ is smoothable as a family for $I=\{i_{1},\ldots, i_{k}\}$ with $k\leq m-3$.
The restriction  $X|_{T^{k}_{I}}$ is the multiple mapping torus of $f_{i_{1}}, \ldots, f_{i_{k}}$.
By Theorem~\ref{theo: application to nonsmoothable action},
there exists a smooth structure on $M$ such that $f_{i_{1}}, \ldots, f_{i_{k}}$ are diffeomorphisms.
Therefore the structure group of $X|_{T^{k}_{I}}$ obviously reduces to $\operatorname{Diff}(M)$ with respect to this smooth structure.
\end{proof}

\begin{Remark}
The assertion on non-smoothability of $f_{i_{1}}, \ldots, f_{i_{m-2}}$ in Theorem~\ref{theo: application to nonsmoothable action} obviously follows from Theorem~\ref{theo: application to nonsmoothable family}.
\end{Remark}

\begin{Remark}
In the case of $m=3$ in Theorem~\ref{theo: application to nonsmoothable family}, the second condition on the smoothability of $X|_{T^{k}_{I}}$ turns no condition.
\end{Remark}

\begin{Remark}
The non-smoothability of $X$ given in Theorem~\ref{theo: application to nonsmoothable family} in the case that $m=3$ follows from Morgan--Szab\'{o}~\cite{MS} without using Theorem~\ref{theo: application to nonsmoothable family} as follows.
The family $X$ in the case that $m=3$ is a $\operatorname{Homeo}(M)$-bundle $M \to X \to T^{k}$ with $k \in \{1,2,3\}$ and $M = 2(-E_{8}) \# 3S^{2} \times S^{2}$.
This bundle is given as the multiple mapping torus for commuting homeomorphisms supported in the $3S^{2}\times S^{2}$-components.
Assume that the family $X$ is smoothable as a family.
Let us take a smooth structure on $M=2(-E_{8}) \# 3S^{2} \times S^{2}$ for which the structure group of $X$ has a reduction to the diffeomorphism group.
Consider the unique spin structure on the smooth $4$-manifold.
This $4$-manifold has non-zero Seiberg--Witten invariant for the spin structure by \cite{MS}, and from this we can deduce that there does not exist a diffeomorphism which reverses the orientation of $H^{+}$.
By restricting the family to $S^{1}$ embedded into $T^{k}=(S^{1})^{k}$ as the first factor, we can get a smoothable family over the circle $M \to X|_{S^{1}} \to S^{1}$.
Since this restricted family is the mapping torus of the homeomorphism $f_{1}$, the smoothability of $X|_{S^{1}}$ implies that $f_{1}$ is topologically isotopic to a diffeomorphism $g$ on $M$.
Since $f_{1}$ reverses the orientation of $H^{+}(M)$, so does $g$.
This is a contradiction.
\end{Remark}

One  can verify  a bit stronger result  on the smoothability of $X|_{S^{1}}$ for any $S^{1}$ embedded in $T^{m}$ in Theorem~\ref{theo: application to nonsmoothable family}.

\begin{Proposition}
\label{prop: application to nonsmoothable family 2}
Let $4\leq m\leq 6$ and let $M \to X \to T^{m}$ be the $\operatorname{Homeo}(M)$-bundle given in Theorem~\ref{theo: application to nonsmoothable family}.
Then for any homeomorphism $\varphi : M \to K3\#(m-3)S^{2} \times S^{2}$ and any embedding of $S^{1}$ to $T^{m}$,
the structure group of $X|_{S^{1}}$ reduces to $\Diff(M)$, where $\Diff(M)$ is the diffeomorphism group with respect to the smooth structure on $M$ defined as that of $K3\#(m-3)S^{2} \times S^{2}$ via $\varphi$. 
\end{Proposition}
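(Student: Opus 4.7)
The plan is to reduce the statement to classical results of Wall and Quinn concerning the mapping class group of simply connected $4$-manifolds. Since the homeomorphisms $f_1,\ldots,f_m$ commute with each other and $X\to T^m$ is their multiple mapping torus, any embedded circle $\gamma\hookrightarrow T^m$ representing the free homotopy class $(k_1,\ldots,k_m)\in\pi_1(T^m)=\Z^m$ gives a restricted bundle $X|_\gamma\to S^1$ which is isomorphic, as a $\Homeo(M)$-bundle, to the mapping torus of the single self-homeomorphism
\[
g := f_1^{k_1}\cdots f_m^{k_m}\colon M\to M.
\]
Thus it suffices to show that $g$, viewed as a self-homeomorphism of $M$ equipped with the smooth structure pulled back via $\varphi$, is topologically isotopic to some self-diffeomorphism $h$ of $M$: such an isotopy produces a bundle isomorphism from $X|_\gamma$ to the (smooth) mapping torus of $h$, giving the required reduction to $\Diff(M)$.

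To construct $h$, I would appeal to the theorems of Wall~\cite{Wall} and Quinn~\cite{MR868975}, which apply since $m-3\geq 1$, so $M$ is homeomorphic to $K3\#(m-3)(S^2\times S^2)$, which has an $S^2\times S^2$ summand in the smooth structure defined by $\varphi$. By Quinn's isotopy theorem for simply connected closed topological $4$-manifolds, two self-homeomorphisms of $M$ are topologically isotopic if and only if they induce the same automorphism of $(H^2(M;\Z),Q_M)$; in particular the natural map $\pi_0(\Homeo(M))\to\Aut(H^2(M;\Z),Q_M)$ is an isomorphism. By Wall's theorem, every automorphism of $Q_M$ is realized by a self-diffeomorphism of the smooth manifold $K3\#(m-3)(S^2\times S^2)$. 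Applying the latter to the automorphism $g_*$ on $H^2(M;\Z)$ yields a diffeomorphism $h$ of $M$ (in the smooth structure from $\varphi$) with $h_*=g_*$; by Quinn's theorem $h$ is then topologically isotopic to $g$.

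The main delicate point is the correct invocation of Quinn's isotopy theorem (as distinguished from mere pseudo-isotopy) and the verification that a topological isotopy between monodromies upgrades to an isomorphism of mapping torus bundles respecting the structure group reduction. The latter is standard once the isotopy is in hand, since the mapping torus construction depends only on the isotopy class of the monodromy. Apart from this essentially formal step, the argument is a clean composition of the Wall--Quinn realizability results with the commutativity of $f_1,\ldots,f_m$.
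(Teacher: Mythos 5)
Your proposal is correct and follows essentially the same route as the paper's proof: identify $X|_{S^1}$ with the mapping torus of a single monodromy homeomorphism $g$, realize $g_*$ by a diffeomorphism via Wall (using the $S^2\times S^2$ summand available since $m\geq 4$), and conclude via Quinn's isotopy theorem that $g$ is topologically isotopic to that diffeomorphism, which gives the reduction to $\Diff(M)$. The extra details you supply (the explicit form $g=f_1^{k_1}\cdots f_m^{k_m}$ and the remark that an isotopy of monodromies yields an isomorphism of mapping tori) are correct and only make explicit what the paper leaves implicit.
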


\begin{proof}
Equip $M$ with a smooth structure through $\varphi$.
Take an embedding of $S^{1}$ into $T^{m}$.
Note that $X|_{S^{1}}$ can be regarded as the mapping torus of a homeomorphism $g$ on $M$.
Recall the following two classical results:
\begin{itemize}

\item[$\bullet$]
 Every algebraic automorphism of the intersection form of $M \cong K3 \# (m-3)S^{2} \times S^{2}$
  is induced from a diffeomorphism by a result of Wall~\cite{Wall}.
  
  \item[$\bullet$]
An algebraic automorphism of the intersection form corresponds to a topological isotopy class by a result of Quinn~\cite{MR868975}.
\end{itemize}
Therefore there exists a diffeomorphism on $M$ which is topologically isotopic to $g$.
This means that the structure group of $X|_{S^{1}}$ reduces to $\Diff(M)$.
\end{proof}

Let us denote by $\Homeo(M) \sslash \Diff(M)$
the  {\it homotopy quotient}:
\[
\Homeo(M) \sslash \Diff(M) := \left(E\Diff(M) \times \Homeo(M)\right)/\Diff(M).
\]

\begin{Corollary}
\label{cor: the fun group of quotient}
We have 
\[
\pi_{1}(\Homeo(K3\# S^{2} \times S^{2}) \sslash \Diff(K3\# S^{2} \times S^{2})) \neq 0.
\]
\end{Corollary}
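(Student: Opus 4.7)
The plan is to derive $\pi_{1}(F) \neq 0$ with $F := \Homeo(M) \sslash \Diff(M)$ by obstruction theory, combining Theorem~\ref{theo: application to nonsmoothable family} and Proposition~\ref{prop: application to nonsmoothable family 2}. First, I would identify $F$ with the homotopy fiber of $B\Diff(M) \to B\Homeo(M)$. The long exact sequence of this fibration gives
\[
\pi_{0}(F) \cong \pi_{0}(\Homeo(M))/\mathrm{image}(\pi_{0}(\Diff(M))),
\]
which vanishes because $\pi_{0}(\Diff(M)) \to \pi_{0}(\Homeo(M))$ is surjective by the Wall and Quinn results recalled in the proof of Proposition~\ref{prop: application to nonsmoothable family 2}. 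Hence $F$ is path-connected.

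Next, I would take $m = 4$ in Theorem~\ref{theo: application to nonsmoothable family}, producing a $\Homeo(M')$-bundle $X \to T^{4}$ with $M' := 2(-E_{8}) \# 4 S^{2} \times S^{2}$. Since $M'$ and $M = K3 \# S^{2} \times S^{2}$ are simply connected smooth $4$-manifolds with the same intersection form, they are homeomorphic by Freedman's classification, so I may regard $X$ as a $\Homeo(M)$-bundle. Fix a $2$-subtorus $T^{2} \subset T^{4}$: by the theorem, $X|_{T^{2}}$ has no smooth reduction for any smooth structure on $M$, while by Proposition~\ref{prop: application to nonsmoothable family 2} the restriction $X|_{S^{1}}$ does admit one for every embedded circle $S^{1} \subset T^{4}$, in particular for the two generating circles of $T^{2}$.

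Now assume $\pi_{1}(F) = 0$ for contradiction, and let $\varphi : T^{2} \to B\Homeo(M)$ be the classifying map of $X|_{T^{2}}$. The restriction of $\varphi$ to each generating circle $S^{1}_{i} \subset T^{2}$ admits a lift to $B\Diff(M)$ by the previous step; since $F$ is path-connected, I can modify each lift so that the two agree at the basepoint, producing a lift $\widetilde{\varphi}_{\mathrm{sk}} : S^{1} \vee S^{1} \to B\Diff(M)$ over the $1$-skeleton of the standard CW decomposition of $T^{2}$. The only remaining obstruction to extending $\widetilde{\varphi}_{\mathrm{sk}}$ across the single $2$-cell lies in $H^{2}(T^{2}, S^{1} \vee S^{1}; \pi_{1}(F)) \cong \pi_{1}(F)$ (with a possibly twisted coefficient system, which is irrelevant once the group itself is trivial). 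This obstruction therefore vanishes, $\varphi$ lifts to $B\Diff(M)$, and $X|_{T^{2}}$ acquires a smooth reduction---contradicting Theorem~\ref{theo: application to nonsmoothable family}. Hence $\pi_{1}(F) \neq 0$.

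The one technical wrinkle is matching basepoints when assembling $\widetilde{\varphi}_{\mathrm{sk}}$: turning an unbased lift on each $S^{1}_{i}$ into a based one uses exactly path-connectedness of $F$, so the preliminary computation $\pi_{0}(F) = 0$ is essential. With that in hand, the remainder is formal obstruction theory for sections of the Serre fibration $F \to B\Diff(M) \to B\Homeo(M)$ pulled back over $T^{2}$.
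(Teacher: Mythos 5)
Your proposal is correct and follows the same route as the paper: the paper's proof simply cites the $m=4$ case of Theorem~\ref{theo: application to nonsmoothable family} together with Proposition~\ref{prop: application to nonsmoothable family 2} and asserts that these force $\pi_{1}$ of the homotopy fiber of $B\Diff(M) \to B\Homeo(M)$ (which is $\Homeo(M) \sslash \Diff(M)$) to be non-trivial. You have merely made explicit the obstruction-theoretic bookkeeping --- path-connectedness of the fiber via Wall--Quinn, basepoint matching on the $1$-skeleton, and the single $\pi_{1}$-valued obstruction over the $2$-cell of $T^{2}$ --- that the paper leaves implicit, and each of these steps is sound.
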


\begin{proof}
Set $M=K3\# S^{2} \times S^{2}$.
The case that $m=4$ of Theorem~\ref{theo: application to nonsmoothable family} and Proposition~\ref{prop: application to nonsmoothable family 2}  implies that the fundamental group of the homotopy fiber of the natural map $B\Diff(M) \to B\Homeo(M)$ is non-trivial.
To finish the proof, just recall that this homotopy fiber is homotopy equivalent to $\Homeo(M) \sslash \Diff(M)$.
\end{proof}

\begin{Remark}
Note that the argument of the  proof in  Corollary~\ref{cor: the fun group of quotient} is valid also for the 
$4$-manifold as $Z\#S^{2} \times S^{2}$ instead of $K3\#S^{2} \times S^{2}$, where $Z$ is an exotic $K3$.
However, we do not know an example of $Z$ such that $Z\#S^{2} \times S^{2}$ is not diffeomorphic to $K3\#S^{2} \times S^{2}$.
\end{Remark}

\section{Calculation of the index bundle}
\label{section: Calculation of the index bundle}

In this section, we shall give a few ways to give a sufficient condition for the Dirac index bundle associated with a given family of $4$-manifolds to be trivial.
The results given in this section have been used in the previous sections.

\begin{Lemma}
\label{Lemma: get rid of assum on ind}
Let $M$ be a closed spin $4$-manifold.
Let $f_1,\ldots,f_n$ be spin commuting diffeomorphisms on $M$.
If $\supp{f}_{1}, \ldots, \supp{f}_{n}$ are mutually disjoint,
then either the spin Dirac index bundle $\ind D$ associated with $f_1,\ldots,f_n$ or $-\ind{D}$ is represented by a trivial bundle.
\end{Lemma}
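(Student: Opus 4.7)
The plan is to exploit the disjoint-support hypothesis to cut $X$ into pieces over which the Dirac index is forced to be a constant $\HH$-bundle. The K-theoretic input driving the whole argument is
\[
\widetilde{KSp}(S^1) \cong \pi_0(\operatorname{Sp}) = 0,
\]
so every virtual quaternionic bundle over a circle is stably trivial; in particular, a $\Pin(2)$-equivariant class on $S^1$ coming from fibrewise $\HH$-multiplication must be a constant bundle.

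First I would thicken the supports: choose mutually disjoint compact codimension-zero submanifolds $K_i \subset M$ with $\supp f_i \subset \operatorname{int}(K_i)$, arranged so that $f_i = \id$ on a collar of $\partial K_i$, and set $M_0 = M \setminus \bigcup_i \operatorname{int}(K_i)$. Since $f_j$ acts trivially on $M_0$ and on each $K_i$ for $j \neq i$, the multiple mapping torus decomposes as
\[
X = (M_0 \times T^n) \cup_{\partial} \bigcup_{i=1}^{n}(Y_i \times T^{n-1}_{\hat{\imath}}),
\]
where $Y_i = K_i \times_{f_i} S^1_i$ is the mapping torus of $f_i|_{K_i}$ over the $i$-th circle $S^1_i \subset T^n$, and $T^{n-1}_{\hat{\imath}}$ is the product of the remaining circles; the pieces are glued smoothly along $\partial K_i \times T^n$ using that $f_i$ is the identity near $\partial K_i$.

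Next I would invoke additivity of the family Dirac index under this cut-and-paste, with APS boundary conditions chosen so that the contributions on $\partial K_i \times T^n$ from the two sides cancel. The family $M_0 \times T^n \to T^n$ is a trivial product, so its contribution to $[\ind D]$ is pulled back from $KO_{\Pin(2)}(\mathrm{pt})$ and is therefore a constant $\HH$-bundle. The family $Y_i \times T^{n-1}_{\hat{\imath}} \to T^n = S^1_i \times T^{n-1}_{\hat{\imath}}$ is the identity in the $T^{n-1}_{\hat{\imath}}$-direction, so its contribution is pulled back from $KSp(S^1_i)$, which by $\widetilde{KSp}(S^1) = 0$ consists only of constant $\HH$-bundles. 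Summing the contributions gives $[\ind D] = r\,[\underline{\HH}]$ in $KO_{\Pin(2)}(T^n)$ for some integer $r$, and $r$ is forced to equal $-\sign(M)/8$ by evaluating at a point via the classical Atiyah--Singer theorem.

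Depending on the sign of $r$, either $[\ind D]$ or $-[\ind D]$ is then represented by the trivial quaternionic bundle of rank $|r|$, which is what the lemma asserts. The main technical obstacle is carrying out the additivity step rigorously in the equivariant $KO_{\Pin(2)}$-setting, especially when the fibrewise boundary Dirac operator on $\partial K_i$ has non-trivial kernel; this can be handled either by perturbing the fibrewise metric to invertibilise the boundary operator and applying a family APS theorem, or by appealing to a generalised version involving eta invariants. As a fallback that avoids APS entirely, one can compute $\operatorname{ch}((\ind D)_\C)$ directly via the family index theorem: on each piece $T(X/T^n)$ pulls back from a base of dimension at most $5$, so every characteristic class combination of degree $\geq 6$ vanishes there, killing all positive-degree components of $\operatorname{ch}((\ind D)_\C)$ and at least establishing the rational version of the lemma.
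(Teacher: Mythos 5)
Your proposal is correct and follows essentially the same route as the paper: decompose $M$ into disjoint codimension-zero pieces each containing one $\supp f_i$, apply excision/additivity of the family index (the paper likewise handles the analytic issue by attaching cylindrical ends and using weighted Sobolev norms), and conclude from $KSp(S^1)\cong KSp(\mathrm{pt})=\Z$ that each summand, hence the total index, is $\pm$ a trivial quaternionic bundle. The only cosmetic difference is that you keep a separate ``background'' piece $M_0\times T^n$, whereas the paper absorbs the complement of the supports into the $n$ pieces $M_i$.
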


\begin{proof}
We shall use the excision formula of the index of families of Fredholm operators, and for the sake of it, decompose $M$ into $n$-pieces of codimension-0-submanifolds
\[
M = M_{1} \cup_{Y_{1}} \cdots \cup_{Y_{n-1}} M_{n}
\]
so that $\supp{f_{i}} \subset M_{i}$ for each $i$ as follows.
Set $N_{0} := M$.
Let us define closed subsets $A_{1}, B_{1} \subset N_{0}$ by $A_{1} := \supp{f}_{1}$ and $B_{1} := \supp{f}_{2} \sqcup \cdots \sqcup \supp{f}_{n}$.
By Urysohn's lemma, we can take a continuous function $\tilde{\chi}_{1} : N_{0} \to [-1,1]$ such that $\tilde{\chi}_{1}(A_{1}) = \{-1\}$ and $\tilde{\chi}_{1}(B_{1})=\{1\}$.
By perturbing $\tilde{\chi}_{1}$, we can get a smooth function $\chi_{1} : N_{0} \to [-3/2, 3/2]$ such that $\chi_{1}(A_{1}) \subset [-3/2,-1]$ and $\chi_{1}(B_{1}) \subset [1,3/2]$.
By Sard's theorem, for a generic point $\epsilon \in (-1,1)$, the inverse image $Y_{1} := \chi_{1}^{-1}(\epsilon)$ is a $3$-dimensional closed submanifold of $N_{0}$.
Define $M_{1} := \chi_{1}^{-1}([-3/2,\epsilon])$ and $N_{1} := \chi_{1}^{-1}([\epsilon,3/2])$.
Then we get a decomposition into codimension-$0$-submanifolds
 of $N_{0} = M_{1} \cup_{Y_{1}} N_{1}$ along $Y_{1}$.
Next, let us define closed subsets $A_{2}, B_{2} \subset N_{1}$ by $A_{2} := \supp{f}_{2} \sqcup Y_{1}$ and $B_{2} := \supp{f}_{3} \sqcup \cdots \sqcup \supp{f}_{n}$.
By the same procedure, we can get a decomposition of $N_{1}$ into codimension-$0$-submanifolds along a $3$-dimensional submanifold $Y_{2}$ of $\mathop{\rm{int}}{N_{2}}$:
 $N_{1} = M_{2} \cup_{Y_{2}} N_{2}$.
 Note that $Y_{1}$ is a closed $3$-manifold.
Proceeding these steps inductively, we can get a decomposition of $M$ into codimension-$0$-submanifolds
\[
M = M_{1} \cup_{Y_{1}} \cdots \cup_{Y_{n-1}} M_{n}
\]
along closed $3$-manifolds $Y_{1}, \ldots, Y_{n-1}$.
By construction, each $\supp{f}_{i}$ is contained in $M_{i}$.
Let $M_{i} \to X_{i} \to S^{1}$ be the mapping cylinder of $f_{i}$.
This $X_{i}$ is a bundle of a smooth $4$-manifold with boundary.
Our multiple mapping cylinder $M \to X \to T^{n}$ is regarded as the fiberwise sum of $\pi_{1}^{\ast}X_{1}, \ldots, \pi_{n}^{\ast}X_{n}$ along trivial bundles $Y_{1} \times T^{n} \to T^{n}, \ldots, Y_{n-1} \times T^{n} \to T^{n}$, where $\pi_{i} : T^{n} \to S^{1}$ is the $i$-th projection.
Denote by $\hat{M}_{i}$ the cylindrical $4$-manifold obtained by gluing $M_{i}$ with $\partial M_{i} \times [0, \infty)$.
Then we can get a bundle of a cylindrical $4$-manifold $\hat{M}_{i} \to \hat{X}_{i} \to S^{1}$, and can define 
the family of spin Dirac operators $D_{i}$ on $\hat{X}_{i}$.
Then, under suitable weighted Sobolev norms (for example, see Donaldson's book~\cite[Subsubsection~3.3.1]{MR1883043}), we
can obtain
\begin{align}
\label{eq: excision formula of the index of families}
[\ind{D}] = [\ind{\pi_{1}^{\ast} D_{1}}] + \cdots + [\ind{\pi_{n}^{\ast} D_{n}}]
\end{align}
in $KO_{\Pin(2)}(T^{n})$ by the excision formula of the index of families.

Since $[\ind D_i]\in KSp(S^1)\otimes R(G;\HH)$ and $KSp(S^1)=KSp(pt)=\Z$, $\ind{D_{i}}$ or $-\ind{D_{i}}$ is represented by a trivial quaternion bundle in $KO_{\Pin(2)}(S^{1})$ (see Remark \ref{rem:rep-decomposition}).
Hence  $\ind{D}$ or $-\ind{D}$ is the same  by \eqref{eq: excision formula of the index of families}.
\end{proof}

\begin{Remark}
	Note that we cannot  apply  Lemma~\ref{Lemma: get rid of assum on ind} for the proof of Theorem~\ref{theo: application to nonsmoothable family}.
	%The reason why we cannot use Lemma~\ref{Lemma: get rid of assum on ind} for the family $X$ given in the proof of Theorem~\ref{theo: application to nonsmoothable family} is as follows.
	To verify   non-smoothability of $X|_{T^k_I}$ as a family, 
	 we argue by contradiction, and for this
	 we assume that $X|_{T^k_I}$ has a reduction to the diffeomorphism group with respect to some smooth structure of the fiber.
	However this assumption does not guarantee that $f_{i_1}, \ldots, f_{i_k}$ are diffeomorphisms, but just homeomorphisms.
	%	Therefore we cannot apply Lemma~\ref{Lemma: get rid of assum on ind} for $X$.
%	In the proof of Lemma~\ref{Lemma: get rid of assum on ind}, the condition that each $f_{i}$ is a diffeomorphism is crucially used.
%	In fact, the condition that each $f_{i}$ is a diffeomorphism ensures that each $X_{i}$ is a smooth family,
%	and if we cannot ensure that $X_{i}$ is a smooth family, we cannot define $\ind{D_{i}}$,
%	then we cannot use the additivity of the indices.
\end{Remark}

\begin{Remark}
	One can deduce the assumption $\ind{D}=[\underline{\HH}]$  in  Corollary~\ref{cor:cor1} and Theorem~\ref{thm:cor2} 
	from the following stronger but more geometric condition, 
	which is  different  from Lemma~\ref{Lemma: get rid of assum on ind}.	
		 Assume that there exists  a Riemannian metric on $M$ which is invariant under the pull-backs of all $f_{1}, \ldots, f_{n}$.
	For example, this assumption is satisfied if  all of $f_{1}, \ldots, f_{n}$ have finite order. Indeed,
	 the group generated by them  is  finite since 
	$f_{1}, \ldots, f_{n}$  mutually commute, and then
	we can obtain an invariant metric 
	by taking the average of any metric by the action of this finite  group.
	Let us  derive $\ind{D}=[\underline{\HH}]$ assuming the existence of an invariant metric
	 $g_{0}$ for $f_{1}, \ldots, f_{n}$.
	The metric $g_{0}$ gives a ``constant'' fiberwise metric on the mapping torus for $f_{1}, \ldots, f_{n}$.
	Note that we can employ this fiberwise metric in the process of a finite dimensional approximation of a family of Seiberg--Witten equations described in Subsection~\ref{subsection: Families Seiberg--Witten invariants}, since any genericity for the metric is not necessary for the finite dimensional approximation.
		Then the index bundle $\ind{D}$ is clearly trivial.
	Because of the usual index calculation, the complex rank of the fiber of $\ind{D}$ is $|{\rm sign}(M)|/8=2$.
	Therefore we obtain $\ind{D}=[\underline{\HH}]$.
\end{Remark}

The index bundle is always trivial when  
 the base space is a low-dimensional torus:

\begin{Lemma}
	\label{lem: dim of base is less than 3}
	Let $(M,\fs)$ be a closed spin $4$-manifold.
	Let $B$ be a closed manifold and
	 $X\to T^k$ be a fiber bundle with fibers $M$ with a global spin structure $\fs_X$ modelled on $\fs$.
	Let $[\ind D] \in KO_{\Pin(2)}(T^k)$ denote the class of the (virtual) index bundle of the family of spin Dirac operators associated to $X$.
	If $k \leq 3$, then $[\ind D]$ or $-[\ind D]$ is represented by a trivial quaternionic vector bundle.
\end{Lemma}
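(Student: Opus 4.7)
The plan is to reduce the equivariant statement to a non-equivariant $KSp$-computation via Remark~\ref{rem:rep-decomposition}, and then to compute $KSp(T^k)$ for $k\leq 3$. Since the $\Pin(2)$-action on $T^{k}$ is trivial, that remark identifies the quaternionic summand of $KO_{\Pin(2)}(T^{k})$ with $KSp(T^{k})\otimes R(\Pin(2);\HH)$, and places $[\ind D]$ inside it as $[\ind D]_{0}\otimes h_{1}$ for a class $[\ind D]_{0}\in KSp(T^{k})$. Consequently, showing that $\pm[\ind D]$ is represented by a trivial quaternionic bundle in $KO_{\Pin(2)}(T^{k})$ reduces to showing that $\pm[\ind D]_{0}$ is a non-negative integer multiple of $[\underline{\HH}]$ in $KSp(T^{k})$.

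The computational heart is then to verify $\widetilde{KSp}(T^{k})=0$ for $k\leq 3$, equivalently $KSp(T^{k})\cong\Z\cdot[\underline{\HH}]$. I would run the Atiyah--Hirzebruch spectral sequence
\[
E_{2}^{p,q}=H^{p}(T^{k};KSp^{q}(\mathrm{pt}))\Rightarrow KSp^{p+q}(T^{k}).
\]
By Bott periodicity $KSp^{-n}(\mathrm{pt})\cong KO^{-n-4}(\mathrm{pt})$, which vanishes for $n\in\{1,2,3\}$, while $KSp^{0}(\mathrm{pt})=\Z$. On the $(p+q=0)$-diagonal, for $k\leq 3$ the terms $E_{2}^{p,-p}$ with $1\leq p\leq k$ all vanish because either the coefficient group is zero or (for $p>k$) the cohomology of $T^{k}$ is zero; only $E_{2}^{0,0}=\Z$ survives, and no differential can reach or leave it (the potential targets $E_{r}^{r,-r+1}=H^{r}(T^{k};KSp^{-r+1}(\mathrm{pt}))$ vanish for $r\geq 2$ by the same dimension and coefficient reasons). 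Hence $KSp(T^{k})\cong\Z$, generated by $[\underline{\HH}]$. Alternatively, one can use the stable splitting $T^{k}\simeq_{s}\bigvee_{i=1}^{k}\binom{k}{i}S^{i}$ and $\widetilde{KSp}(S^{i})=KSp^{-i}(\mathrm{pt})=0$ for $i\leq 3$ to reach the same conclusion.

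Putting these together, $[\ind D]_{0}=r\,[\underline{\HH}]$ for the integer $r=-\operatorname{sign}(M)/16$ determined by the Atiyah--Singer index theorem (Rokhlin's theorem ensures $r\in\Z$). If $r\geq 0$ the class $[\ind D]$ is represented by the trivial $\Pin(2)$-equivariant quaternionic bundle $\underline{\HH}^{r}\otimes h_{1}$; if $r<0$ the same holds for $-[\ind D]$ with $\underline{\HH}^{|r|}\otimes h_{1}$. Either way the conclusion of the lemma follows.

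I do not anticipate a serious obstacle: the argument is essentially a standard low-dimensional K-theory computation coupled with the equivariant decomposition already recorded in Remark~\ref{rem:rep-decomposition}. The only delicate point is verifying that no nontrivial differentials act on $E_{2}^{0,0}$ in the AHSS, and as noted this is immediate from the simultaneous vanishing of the relevant $KSp^{-n}(\mathrm{pt})$ and of $H^{p}(T^{k})$ in the range $k\leq 3$.
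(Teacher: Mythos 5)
Your proposal is correct and follows essentially the same route as the paper: reduce to the non-equivariant class $[\ind D]_0\in KSp(T^k)$ via Remark~\ref{rem:rep-decomposition}, and then show $KSp(T^k)\cong KSp(\mathrm{pt})\cong\Z$ for $k\leq 3$, so that the class is detected by its quaternionic rank. The only (inessential) difference is how that computation is carried out: the paper uses its splitting $KSp(T^k)\cong\bigoplus_{S\subset[k]}KSp(\R^S)$ from \propref{prop:FK-decomposition} together with $KSp(\R^q)=0$ for $q=1,2,3$, which is the same vanishing you obtain via the Atiyah--Hirzebruch spectral sequence or the stable wedge splitting of the torus.
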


Before proving Lemma~\ref{lem: dim of base is less than 3}, we need some preliminaries.
By \remref{rem:rep-decomposition}, we may assume that $[\ind D] $ is in $KSp(T^n)\otimes R(\Pin(2);\HH)$ and can be written as
\[
[\ind D] = [\ind D]_0\otimes h_1,
\] 
where $[\ind D]_0\in KSp(T^n)$ is the class of the index bundle of $D$ as an non-equivariant $\HH$-linear operator, and $h_1\in R(\Pin(2);\HH)$ is the representation given by the multiplication of $\Pin(2)$ on $\HH$.
Then we have the following useful decomposition of the $KSp$-groups of $T^n$.

\begin{Proposition}[({\cite[Lemma 31 and Remark 32]{FK}})]\label{prop:FK-decomposition}
	For integers $q$ and $p$ with $p\geq 0$, we have an isomorphism
	\[
	KSp^q(T^n\times\R^p)\cong \bigoplus_{S\subset[n]}KSp^q(\R^S\times\R^p),
	\]
	where $S$ runs through all the subsets of $[n]=\{1,2,\ldots,n\}$ and $\R^S$ is defined as follows: let $\R_k$ be 
	 the $k$-th component of $\R^n$.  Then $\R^S=\prod_{k\in S}\R_k$ if $S\neq\emptyset$, 
	 and $\R^S=\{pt\}$ if $S=\emptyset$.
\end{Proposition}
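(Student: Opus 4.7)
The plan is to reduce the decomposition to a standard stable splitting of the one-point compactification $T^n_+$, and then apply reduced $KSp$-theory. First, I would rewrite both sides using reduced $KSp$-theory of one-point compactifications. With the convention $KSp^q(Y) = \widetilde{KSp}^q(Y^+)$ for locally compact $Y$, the identity $(T^n \times \R^p)^+ = T^n_+ \wedge S^p$ yields $KSp^q(T^n \times \R^p) = \widetilde{KSp}^q(T^n_+ \wedge S^p)$, and similarly $KSp^q(\R^S \times \R^p) = \widetilde{KSp}^q(S^{|S|+p})$, where the convention $\R^\emptyset = \{\mathrm{pt}\}$ correctly yields $S^p$ in the case $S = \emptyset$.

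The key geometric input is the stable splitting $S^1_+ \simeq S^1 \vee S^0$ (a special case of the fact that $X_+$ is stably $X \vee S^0$ for a well-pointed $X$): the collapse $S^1_+ \to S^1$ killing the disjoint basepoint, together with the map $S^1_+ \to S^0$ sending $S^1$ to one point and the disjoint basepoint to the other, exhibit $S^1_+$ as a stable wedge of $S^1$ and $S^0$. Since one-point compactification converts products into smash products, we have $T^n_+ = (S^1_+)^{\wedge n}$, so iterating the splitting and distributing smash over wedge gives
\[
T^n_+ \;\simeq\; (S^1 \vee S^0)^{\wedge n} \;\simeq\; \bigvee_{S \subset [n]} (S^1)^{\wedge |S|} \wedge (S^0)^{\wedge (n-|S|)} \;=\; \bigvee_{S \subset [n]} S^{|S|},
\]
where the wedge summand indexed by $S$ records the set of smash factors in which we pick the $S^1$ summand rather than the $S^0$ summand.

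Smashing with $S^p$ and applying reduced $KSp$-theory then yields the asserted decomposition, because stable wedges split any generalized cohomology theory and the $S$-th summand is precisely $\widetilde{KSp}^q(S^{|S|+p}) = KSp^q(\R^S \times \R^p)$ by the identification in the first step. The main technical point is careful bookkeeping of the stable splitting and of the identification of each summand $S^{|S|}$ with the compactification of $\R^S$; once that is done, the result is formal from exactness of $KSp$ on cofibrations and additivity over wedges, so no substantial obstacle is anticipated. This argument is essentially the $KSp$-theoretic reformulation of the standard cellular decomposition of $T^n$ into cells indexed by subsets of $[n]$, and it applies verbatim to any generalized cohomology theory.
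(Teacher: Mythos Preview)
Your proof is correct. Both approaches rest on the same underlying fact---that $T^n_+$ splits stably as a wedge of spheres indexed by subsets of $[n]$---but they organize it differently. The paper argues inductively: it uses the long exact sequence of the pair $(T^n,T^{n-1})\times\R^p$, identifies the relative term via excision with $KSp^q(T^{n-1}\times\R^{p+1})$, and splits the sequence explicitly using the pull-back $\pi^*$ along the projection $T^n\to T^{n-1}$ together with the push-forward $i_!$ along an open embedding $\R\hookrightarrow S^1$. This peels off one circle factor at a time. Your argument does it globally by invoking the stable equivalence $S^1_+\simeq S^1\vee S^0$, smashing $n$ copies, and distributing.

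Your route is slicker and makes transparent that the result holds for any generalized cohomology theory. The paper's route has the minor advantage of naming the isomorphism concretely as $i_!+\pi^*$, which it then uses in the subsequent Lemma (compatibility of the splitting with the forgetful map $KSp\to K$); your space-level splitting yields that compatibility just as well, since any natural transformation of cohomology theories respects a stable splitting of the source space.
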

\begin{proof}
	Consider the exact sequence
	\begin{equation}\label{eq:KSp-exact}
	\begin{CD}
	\cdots  @>>>KSp^q((T^n,T^{n-1})\times\R^p)@>{j^*}>>KSp^q(T^n\times\R^p)@>{h^*}>>KSp^q(T^{n-1}\times\R^p)@>>>  \cdots .
	\end{CD}
	\end{equation}
	By using excision, the first term is identified with
	\[
	KSp^q((T^n,T^{n-1})\times (\R^p\cup\{\infty\},\{\infty\})) \cong KSp^q(T^{n-1}\times\R^{p+1}).
	\]
	Then $j^*$ is identified with the push-forward map $i_{!}\colon KSp^q(T^{n-1}\times \R^{p+1})\to KSp^q(T^n\times\R^p)$ induced from an open embedding $i\colon \R\to T^1\subset T^n$.
	Let $\pi\colon T^n\times\R^p\to T^{n-1}\times\R^p$ be the projection. 
	Then $\pi^*$ gives a right-inverse of $h^*$. 
	Therefore the above sequence splits. Moreover
	$h^*$ is a surjection, and then 
	$j^*$ turns out to be an injection.
	 Thus we obtain an isomorphism
	\[
	i_!+\pi^*\colon KSp^q(T^{n-1}\times\R^{p+1})\oplus KSp^q(T^{n-1}\times\R^p)\overset{\cong}{\to} KSp^q(T^n\times\R^p).
	\] 
	By an induction on the cardinality $|S|$ of $S$, the proposition is proved.
\end{proof}

\begin{proof}[Proof of \lemref{lem: dim of base is less than 3}]
	Note that $KSp(pt)=\Z$, $KSp(\R^q)=0$ for $q=1,2,3$.
	(See e.g. \cite[Chapter 11]{Switzer}.)
	If $k\leq 3$,  \propref{prop:FK-decomposition} implies that
	\[
	KSp(T^k) \cong KSp(pt)\cong \Z.
	\]
	This means that every element in $KSp(T^k)$ is represented by a trivial bundle and classified by its rank over $\HH$ if $k\leq 3$.
	Therefore $[\ind D]_0$, and hence $[\ind D]$, is represented by a trivial bundle.
\end{proof}

The main part of this section is devoted to prove the following lemma.
The argument is based on the celebrated result by Novikov 
 that the rational Pontrjagin classes are topological invariants.

\begin{Lemma}
\label{lem: Novikov theorem type argument}
Let $M\to X \to T^{m}$ be the topological bundle given in the proof of Theorem~\ref{theo: application to nonsmoothable family}.
For $I\subset[m]$ with $k=|I|=4$, suppose $X|_{T_{I}^4}$ has a smooth reduction. 
Then the Dirac index bundle satisfies $[\ind{D}]=[\underline{\HH}]$.
\end{Lemma}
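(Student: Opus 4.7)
The plan is to show $[\ind D]_0 \in KSp(T^4)$ equals $[\underline{\HH}]$, which together with Remark~\ref{rem:rep-decomposition} yields $[\ind D] = [\underline{\HH}]$. Applying Proposition~\ref{prop:FK-decomposition} together with $KSp(\mathrm{pt}) = KSp(\R^4) = \Z$ and $KSp(\R^q) = 0$ for $q = 1, 2, 3$, one obtains $KSp(T^4) \cong \Z \oplus \Z$. The first summand is detected by the rank at a point, which equals $|\sign(M)|/16 = 1$ for $[\ind D]_0$ by the single-manifold index theorem. Since $KSp(T^4)$ is torsion-free, the task reduces to showing that the second $\Z$-summand also vanishes.

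The second summand can be detected via the rational Chern character $\mathrm{ch} : KSp(T^4) \otimes \Q \hookrightarrow H^{4*}(T^4; \Q) = \Q \oplus \Q$, whose image lies in degrees $0$ and $4$. By the Atiyah--Singer families index theorem, $\mathrm{ch}([\ind D \otimes_{\HH} \C]) = \pi_*(\hat{A}(T(X/B)))$, so the relevant quantity is $\pi_*(\hat{A}_{(8)}) \in H^4(T^4; \Q)$, where $\hat{A}_{(8)} = (7 p_1^2 - 4 p_2)/5760$ and $p_i = p_i(T(X/B))$ are the rational Pontrjagin classes of the fiberwise tangent bundle.

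The crucial input is Novikov's theorem in its family form: the rational Pontrjagin classes of the fiberwise topological tangent microbundle depend only on the topological structure of $X|_{T^4_I}$, independently of the chosen smooth reduction. One then computes $p_1, p_2$ from the topology of the specific construction. Since each $f_i$ is supported in a disjoint ball $B_i \subset M$, the family decomposes topologically as $X|_{T^4_I} = (M^\circ \times T^4) \cup_{\bigsqcup_i S^3 \times T^4} \bigsqcup_{i \in I}(X_i \times T^3_{I \setminus \{i\}})$, where $M^\circ = M \setminus \bigsqcup_i \mathrm{int}(B_i)$ and $X_i$ is the mapping torus of $f_i|_{B_i}$ over $S^1_i$. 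Since $B_i$ is contractible, $X_i \simeq S^1$ and $H^{\geq 2}(X_i; \Q) = 0$, so the rational Pontrjagin classes of the fiberwise tangent microbundle vanish on $X_i \times T^3_{I \setminus \{i\}}$. Over $M^\circ \times T^4$ the fiberwise tangent microbundle is the pullback of $\tau(M^\circ)$, so its Pontrjagin classes are pulled back from $M^\circ$; since $M$ is $4$-dimensional, $p_1(TM)^2, p_2(TM) \in H^8(M; \Q) = 0$.

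The main obstacle is carrying out a Mayer--Vietoris calculation on $X|_{T^4_I}$ to promote these piecewise vanishings to a global statement, and in particular to rule out ``connecting'' classes from $H^3(\bigsqcup_i S^3 \times T^4; \Q)$ that could \emph{a priori} contribute non-trivially to $p_1$ or $p_2$ at higher Serre filtration on $X|_{T^4_I}$. This is precisely where Kirby--Siebenmann theory enters, providing sufficient control over the topological classifying map for the fiberwise tangent microbundle to conclude $p_1^2 = 0$ and $\pi_*(p_2)$ has no $H^4(T^4;\Q)$-component; hence $\pi_*((7 p_1^2 - 4 p_2)/5760) = 0$, the second $\Z$-summand vanishes, and $[\ind D]_0 = [\underline{\HH}]$.
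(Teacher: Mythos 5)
Your overall skeleton matches the paper's: reduce to a statement in $K$-theory detected by the Chern character, apply the families index theorem, and use topological invariance of rational Pontrjagin classes to compute $p_1^2$ and $p_2$ of the fiberwise tangent microbundle from the topological construction of $X|_{T^4_I}$. But there are two genuine problems in the execution.

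First, your decomposition rests on a false premise. The homeomorphisms $f_i$ are \emph{not} supported in disjoint balls $B_i$: each $f_i$ is a copy of $\varrho$ on the $i$-th $S^2\times S^2$ summand, and $\varrho$ reverses the orientation of $H^+(S^2\times S^2)$, so it acts nontrivially on $H_2$ and cannot be supported in a ball. The correct pieces of the decomposition are $T^{k-1}\times N_{f_i}$ where $N_{f_i}$ is the mapping torus of $\varrho$ on $S^2\times S^2$; this has $H^4(N_{f_i};\Q)\cong\Q$, so your claim that $H^{\geq 2}$ of these pieces vanishes and hence the Pontrjagin classes vanish there ``for degree reasons'' is wrong. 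The vanishing of $p_1$ on these pieces is true but requires an argument: it comes from $\sign(S^2\times S^2)=0$, hence $p_1(S^2\times S^2)=0$, transported to the mapping torus via the Serre spectral sequence.

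Second, the step you flag as ``the main obstacle'' — promoting the piecewise vanishings to a global statement — is exactly the content of the proof and is not supplied by Kirby--Siebenmann theory (which the paper uses only in Section 7, to smooth the total space). The paper's mechanism is obstruction theory in the rationalization $BTOP[0]\simeq BO[0]$: since $\pi_i(BO[0])=0$ unless $i\equiv 0\bmod 4$ and the primary difference obstruction in $H^4$ is a nonzero rational multiple of $p_1^{\rm univ}$, the vanishing of $p_1$ over each $T^{k-1}\times N_{f_i}$ forces the classifying map of the fiberwise tangent microbundle to be null-homotopic on those pieces rel the gluing region. Hence the classifying map $\psi\colon X_I\to BTOP[0]$ factors, up to homotopy, through the collapse map $\kappa\colon X_I\to T^k\times M'$ onto the trivial $M'$-bundle, where $p_1^2$ and $p_2$ vanish for degree reasons because $M'$ is $4$-dimensional. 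Without an argument of this kind your proof does not close; a bare Mayer--Vietoris comparison of cohomology does not by itself control the classifying map of the microbundle across the gluing.

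A minor further point: the paper does not compute directly in $KSp(T^4)\otimes\Q$; it first proves the forgetful map $c\colon KSp(T^4)\to K(T^4)$ is injective (using that $c_S\colon KSp(\R^S)\to K(\R^S)$ is injective for $|S|=0,4$) and then uses injectivity of the integral Chern character on the torsion-free group $K(T^4)$. Your rational variant can be made to work, but the injectivity of $\mathrm{ch}$ on $KSp(T^4)\otimes\Q$ should be justified rather than asserted.
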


Before giving the proof of \lemref{lem: Novikov theorem type argument}, let us describe a strategy of the proof and give some preliminaries.
Denote $X|_{T_{I}^4}$ by $X_I$ and $T_I^k$ by $T_I$. 
Suppose $X_I$ is smoothable as a family and a smooth reduction is given.
We will proceed in the following way:

\begin{itemize}

\item[$\bullet$]
 We verify that the forgetful map $c\colon KSp(T^4)\to K(T^4)$ is injective. 
 
 \item[$\bullet$]
Hence it suffices to prove that the image of $[\ind D]$ under $c$ is represented by a  trivial complex bundle.

\item[$\bullet$]
Since the complex $K$-group of the base space $T_I$ is torsion-free, it suffices to prove that $Ch(\ind D)$, the image of $\ind D$ under the Chern character, is in $H^0(T_I;\Q)$.

\item[$\bullet$]
By the index theorem for families \eqref{eq:family-ind}, it suffices to check 
 $p_{1}^{2}=0$ and $p_{i}=0$ for $i \geq 2$, where $p_i$ are the rational Pontrjagin classes of the tangent bundle along the fibers $T(X_I/T_I)$ of $X_I\to T_I$.

\end{itemize}

It is well-known that the rational Pontrjagin classes of a $\R^n$-bundle depend only on its topological type. 
In fact, the rational Pontrjagin classes can be defined not only for a vector bundle, but also for a topological $\R^n$-bundle whose structure group is in the  group $TOP_n$ of self-homeomorphisms on $\R^n$ preserving the origin.
Furthermore the rational Pontrjagin classes of a bundle are determined by the isomorphism classes as topological bundles, and do not depend on vector bundle structures on them. 
(See Rudyak~\cite[Chapter 3]{MR3467983}, for example. This generalizes the Novikov's theorem.)
Therefore the rational Pontrjagin classes $p_i$ of the {\it tangent micro-bundle} along the fibers $\tau(X_I/T_I)$ 
 are defined over  the underlying topological $\R^n$-bundle of $T(X_I/T_I)$ without using the smooth structure.
We will prove the required properties of $p_i$ directly for $\tau(X_I/T_I)$ from the construction of the topological bundle $X_I$.

To proceed with the above strategy, we recall some classical objects in differential topology.

\subsection{Universal Pontrjagin classes}
Let us recall the rational Pontrjagin classes for topological $\R^n$ bundles.
(See \cite{MR3467983}, for example.)
It is known that the forgetful map $\alpha\colon BO\to BTOP$ induces an isomorphism of their rational cohomology groups 
\[
\alpha^*\colon H^*(BTOP;\Q)\overset{\cong}{\to} H^*(BO;\Q).
\]
Recall that $H^*(BO;\Q)$ is generated by the universal Pontrjagin classes $p_{i}^{\rm univ}$.
Then we have
\[
H^*(BTOP;\Q) \cong H^*(BO;\Q) \cong \Q[p_{1}^{\rm univ}, p_{2}^{\rm univ},\ldots]
\]
via the identification $\alpha^*$.
The stable class of a topological $\R^n$-bundle $\xi\to B$ is classified by its classifying map $t\colon B\to BTOP$.
Define the $i$-th rational Pontrjagin class $p_i(\xi)$ by
\[
p_i(\xi)=t^*p_{i}^{\rm univ}.
\]

\subsection{Rational localization}
Below  we  utilize the $\Q$-localizations $BO[0]$ and $BTOP[0]$ of $BO$ and $BTOP$.
The existence of these $\Q$-localizations is guaranteed 
by the fact that 
both of $BO$ and $BTOP$ are infinite loop spaces, and hence $H$-spaces.
(See Theorem~A and Theorem~C of \cite{MR0236922}.)
In general an  $H$-space is a simple space, and hence is a nilpotent space 
for which a $\Q$-localization can be constructed. 
(See, for example, Corollary~1.4.5 and \S5.3 of \cite{MR2884233}.)
%It is well-known that any nilpotent space has its rationalization.
%Therefore $BO$ and $BTOP$ have their rationalization.

\subsection{Tangent micro-bundle}
We clarify notion of the {\it tangent micro-bundle along the fibers} of a topological bundle $M\to X\overset{\pi}{\to} B$.
Denote the fiber of $X$ over $b\in B $ by $M_b$ and 
define the space $E$ by
\[
E=\{(x,y)\in X\times X\,|\, y\in M_{\pi(x)}\}.
\]
Note that $E$ contains the diagonal set $\Delta_X=\{(x,x)\,|\,x\in X\}$.
The tangent micro-bundle along the fibers $\tau(X/B)$ of $M\to X{\to} B$ is defined as
\begin{equation}\label{eq:micro}
\begin{CD}
\tau(X/B)\colon X@>{\Delta}>> E @>{\pi_1}>> X,
\end{CD}
\end{equation}
where $\Delta$ is the diagonal map and $\pi_1$ is the projection to the first component.
It is easy to check the following properties of $\tau(X/B)$:
\begin{itemize}
	\item[$\bullet$] The sequence \eqref{eq:micro} defines a micro-bundle in Milnor's sense~\cite{MR0161346}.
	(By Kister--Mazur's theorem~\cite{MR0180986}, the micro-bundle determines a topological $\R^n$-bundle 
	which is unique up to isomorphism.)
	\item[$\bullet$]
	If the structure group of $M\to X\to B$ is reduced to $\Diff(M)$ for some smooth structure on $M$,
	then $\tau(X/B)$ is the underlying micro-bundle of the tangent bundle along the fibers $T(X/B)$.

 \end{itemize}

%The basic tool and idea, given as Lemma~\ref{lem: contractibity of the classifying map}, to prove Lemma~\ref{lem: Novikov theorem type argument} is similar to a proof of Novikov's theorem, the topological invariance of the rational Pontrjagin classes.
%(See Rudyak~\cite{MR3467983}.)
%To state Lemma~\ref{lem: contractibity of the classifying map},
%we need the rationalization $BO \to BO_{0}$ and $BTOP \to BTOP_{0}$ of $BO$ and $BTOP$.
%Note that the existence of these rationalization can be seen as follows.
%First, $BO$ and $BTOP$ are infinite loop spaces, and hence H-spaces.
%(See Theorem~A and Theorem~C of \cite{MR0236922}.)
%In general, any H-space is a simple space, and hence a nilpotent space.
%(See, for example, Corollary~1.4.5 of \cite{MR2884233}.)
%It is well-known that any nilpotent space has its rationalization.
%Therefore $BO$ and $BTOP$ have their rationalization.

\begin{Lemma}
\label{lem: contractibity of the classifying map}
Let $f : N \to N$ be an orientation-preserving diffeomorphism
on $N=S^{2} \times S^{2}$.
Assume that $f$ has a fixed embedded ball $B^{4} \subset N$.
Let $N_{f} \to S^{1}$ be the mapping torus of $f$.
Define a map
\[
\phi : (T^{k-1} \times N_{f}, T^{k} \times B^{4}) \to (BO[0],{\rm pt})
\]
as the composition of the classifying map $T^{k-1} \times N_{f} \to BO$
of the tangent bundle along the fibers $T((T^{k-1} \times N_{f})/T^{k})$ of the fiber bundle $T^{k-1} \times N_{f} \to T^{k}$ and the natural map $BO \to BO[0]$.
Then $\phi$ is homotopic to the constant map onto $({\rm pt}, {\rm pt}) \subset (BO[0],{\rm pt})$.
\end{Lemma}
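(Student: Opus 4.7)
The plan is to use rational homotopy theory to convert the null-homotopy question into vanishing of relative rational Pontrjagin classes, then to compute these via the long exact sequence of the pair and the Wang sequence of the mapping torus.

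First, I would recall that $BO$ is an $H$-space (in fact an infinite loop space) with $H^{*}(BO;\Q) = \Q[p_{1},p_{2},\ldots]$ generated by the universal Pontrjagin classes $p_{i} \in H^{4i}$, so its rationalization $BO[0]$ is a simple space whose rational Postnikov tower splits as $BO[0] \simeq \prod_{i\geq 1} K(\Q, 4i)$. Consequently, for any CW-pair $(X,A)$ one has
\[
[(X,A),(BO[0],\mathrm{pt})] \;\cong\; \prod_{i \geq 1} H^{4i}(X,A;\Q),
\]
with the bijection sending a map to the tuple of its relative Pontrjagin classes. Thus $\phi$ is null-homotopic if and only if $\phi^{*}(p_{i}) = 0$ in $H^{4i}(T^{k-1} \times N_{f},\, T^{k} \times B^{4};\Q)$ for every $i \geq 1$.

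Next I would factor $\phi$ through $\pi_{2}\colon T^{k-1} \times N_{f} \to N_{f}$. The tangent bundle along the fibers satisfies
\[
T((T^{k-1}\times N_{f})/T^{k}) \cong \pi_{2}^{*}\, T(N_{f}/S^{1}),
\]
and because $f$ restricts to the identity on a neighborhood of $B^{4}$, $T(N_{f}/S^{1})$ carries a natural trivialization over $S^{1} \times B^{4} \subset N_{f}$; pulling this back via $\pi_{2}$ recovers the trivialization over $T^{k} \times B^{4}$ that defines $\phi$. Hence $\phi$ factors as a composition of maps of pairs
\[
(T^{k-1}\times N_{f},\, T^{k}\times B^{4}) \xrightarrow{\pi_{2}} (N_{f},\, S^{1}\times B^{4}) \xrightarrow{\phi_{0}} (BO[0],\mathrm{pt}),
\]
and it suffices to show that the relative Pontrjagin classes of $T(N_{f}/S^{1})$ vanish in $H^{4i}(N_{f},\, S^{1}\times B^{4};\Q)$ for all $i\geq 1$.

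For this I would use the long exact sequence of the pair, together with $S^{1}\times B^{4}\simeq S^{1}$. Since $\dim N_{f}=5$, both the term $H^{4i}(N_{f};\Q)$ and the neighboring term $H^{4i-1}(S^{1};\Q)$ vanish for $i\geq 2$, so $H^{4i}(N_{f},\,S^{1}\times B^{4};\Q)=0$ automatically in those degrees. For $i=1$, the same sequence yields an injection $H^{4}(N_{f},\,S^{1}\times B^{4};\Q)\hookrightarrow H^{4}(N_{f};\Q)$, so it is enough to check $p_{1}(T(N_{f}/S^{1}))=0$ in $H^{4}(N_{f};\Q)$. From the Wang sequence of the mapping torus $N\to N_{f}\to S^{1}$,
\[
0=H^{3}(N;\Q)\to H^{4}(N_{f};\Q)\to H^{4}(N;\Q)\xrightarrow{f^{*}-1}H^{4}(N;\Q),
\]
(using $H^{3}(S^{2}\times S^{2};\Q)=0$), the restriction to a fiber $H^{4}(N_{f};\Q)\hookrightarrow H^{4}(N;\Q)$ is injective. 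Now $T(N_{f}/S^{1})|_{N}=TN=T(S^{2}\times S^{2})$ is stably parallelizable, so $p_{1}(TN)=0$, and hence $p_{1}(T(N_{f}/S^{1}))=0$ in $H^{4}(N_{f};\Q)$, completing the argument.

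The principal obstacle is setting up cleanly the rational-homotopical identification $[(X,A),(BO[0],\mathrm{pt})]\cong \prod_{i}H^{4i}(X,A;\Q)$; once that formalism is in hand, the remaining cohomology calculation is straightforward, since $N_{f}$ is low-dimensional and the relevant restriction maps are shown to be injective by general-position / Wang-sequence arguments.
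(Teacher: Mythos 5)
Your proof is correct and follows essentially the same route as the paper's: both reduce the null-homotopy question to the vanishing of rational Pontrjagin classes of the vertical tangent bundle by factoring through the projection to $N_{f}$, and both deduce $p_{1}(T(N_{f}/S^{1}))=0$ from $p_{1}(S^{2}\times S^{2})=0$ via the (Wang/Serre) identification $H^{4}(N_{f};\Q)\hookrightarrow H^{4}(N;\Q)$. The only difference is packaging: you invoke the splitting $BO[0]\simeq\prod_{i}K(\Q,4i)$ to classify maps of pairs by all relative Pontrjagin classes at once, whereas the paper runs classical obstruction theory and identifies the primary difference obstruction with $p_{1}^{\rm univ}$ up to a nonzero rational scalar, the higher obstructions vanishing for degree reasons.
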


\begin{proof}
%As a consequence of Kirby--Siebenmann theory, one can see that the natural map $BO_{0} \to BTOP_{0}$ is a homotopy equivalence.
%(See, for example, 3.6.5~Lemma of \cite{MR3467983}.)
%Therefore the all of $H^{\ast}(BO;\Q), H^{\ast}(BO_{0};\Q),H^{\ast}(BTOP;\Q)$ and $H^{\ast}(BTOP_{0};\Q)$ are naturally isomorphic each other.
%Henceforth we identify the all of them via the natural isomorphisms.
%Recall that $H^{\ast}(BO;\Q)$ is the polynomial ring
%$\Q[p_{1}^{\rm univ}, p_{2}^{\rm univ},\ldots]$, where $p_{i}^{\rm univ}$ are the Pontrjagin classes of the universal bundle.
%
Note that 
\[
\pi_i(BO[0]) =\pi_i(BO)\otimes\Q= \left\{
\begin{aligned}
\Q &\quad i = 0\mod 4,\\
0 \,&\quad  i\neq 0\mod 4.
\end{aligned}\right.
\]
Let $u : BO[0] \to BO[0]$ be the identity map and $u_{0} : BO[0] \to \{{\rm pt}\} \subset BO[0]$ be the map onto a point in $BO[0]$.
Then the primary difference obstruction $\delta(u,u_{0})$ is non-zero in $H^{4}(BO[0];\pi_{4}(BO[0])) \cong H^{4}(BO[0];\Q) \cong \Q[p_{1}^{\rm univ}]$.
Therefore there exists non-zero number $r \in \Q \setminus \{0\}$ such that $r\delta(u,u_{0}) = p_{1}^{\rm univ}$.

In general, let $N$ be an oriented closed and simply connected $4$-manifold,
$\tau : N \to BO[0]$ be the composition of the classifying map $N \to BO$ of the tangent bundle of $N$ and the natural map $BO \to BO[0]$, and $\tau_{0} : N \to \{{\rm pt}\} \to BO[0]$ be the map onto a point of $BO[0]$.
We claim that $\tau$ is homotopic to $\tau_0$ if $p_1(N)=0$.
Since $\pi_i(BO[0]) =0$ for $0< i<4$ and  $H^i(N;\Q)=0$ for $i>4$,  the difference obstruction $\delta(\tau,\tau_{0})\in H^4(N;\pi_4(BO[0]))$ is the sole obstruction to homotoping $\tau$ to $\tau_0$.  
Because of the naturality of the obstruction class, we have
$p_{1}(N) = \tau^{\ast} p_{1}^{\rm univ} = r \tau^{\ast} \delta(u,u_{0}) = r \delta(\tau,\tau_{0})$ in $H^{4}(N;\Q)$.
Therefore, if $p_{1}(N)=0$, we have $\delta(\tau,\tau_{0})=0$, and hence $\tau$ is homotopic to the constant map $\tau_0$. 
In particular, if we take $N = S^{2} \times S^{2}$, since $S^{2} \times S^{2}$ has trivial signature, we have $p_{1}(S^{2} \times S^{2})=0$, and thus we can deduce that $\tau$ is homotopic to a constant map onto a point in $BO[0]$.
Similarly, if we fix an embedded ball $B^{4} \subset S^{2} \times S^{2}$ and fix a trivialization of $T(S^{2} \times S^{2})$ over $B^{4}$, we can conclude that the pairwise map $\tau : (S^{2} \times S^{2}, B^{4}) \to (BO[0], {\rm pt} )$ is homotopic to the map onto $({\rm pt}, {\rm pt} ) \subset (BO[0], {\rm pt} )$.

Next, let $f : N \to N$ be an orientation-preserving diffeomorphism
on $N=S^{2} \times S^{2}$.
Assume that $f$ has a fixed embedded ball $B^{4} \subset N$.
Let $N_{f} \to S^{1}$ be the mapping torus of $f$.
By the Serre spectral sequence, one can easily see  that $H^{4}(N_{f};\Q) \cong H^{4}(N;\Q)$,
and $p_{1}(T(N_{f}/S^{1}))$ corresponds to $p_{1}(N)$ via this isomorphism, therefore we have $p_{1}(T(N_{f}/S^{1}))=0$.
Using $T(N_{f}/S^{1})$ instead of $T(N)$ in the last paragraph,
we can see that the composition $\tau_{f} : N_{f} \to BO[0]$ of the classifying map $N_{f} \to BO$ of $T(N_{f}/S^{1})$ and the natural map $BO \to BO[0]$ is homotopic to a constant map onto a point in $BO[0]$.
Similarly, the map $\tau_{f} : (N_{f}, S^{1} \times B^{4}) \to (BO[0],{\rm pt})$ is homotopic to the constant map onto $({\rm pt}, {\rm pt}) \subset (BO[0],{\rm pt})$.
Let $\phi : T^{k-1} \times N_{f} \to BO[0]$ be the composition of the classifying map of $T((T^{k-1} \times N_{f})/T^{k}) \to T^{k-1} \times N_{f}$ and the natural map $BO \to BO[0]$.
Since $\phi = p^{\ast}\tau_{f}$, where $p : T^{k-1} \times N_{f} \to N_{f}$,
we have that $\phi$ is homotopic to a constant map, and similarly $\phi : (T^{k-1} \times N_{f}, T^{k-1} \times S^{1} \times B^{4}) \to (BO[0],{\rm pt})$
is homotopic to the constant map onto $({\rm pt}, {\rm pt}) \subset (BO[0],{\rm pt})$.
\end{proof}

To proceed  the proof of Lemma
\ref{lem: Novikov theorem type argument},
let us describe 
several facts about $K$-theory.
Firstly it is easy to see that the complex $K$-group of $T^n$ admits a direct sum decomposition
\begin{equation}\label{eq:K-decomposition}
K(T^n)\cong \bigoplus_{S\subset[n]}K(\R^S).
\end{equation}
(The proof is parallel to that of \propref{prop:FK-decomposition}.)

Let
 $c\colon KSp(B)\to K(B), c_S\colon KSp(\R^S)\to K(\R^S)$ be 
 the forgetful maps which forget the quaternion structures.
\begin{Lemma}\label{lem:forgetKSp}
The forgetful map $c\colon KSp(T^n)\to K(T^n)$ is identified with the direct sum of the forgetful maps $c_S\colon KSp(\R^S)\to K(\R^S)$:
\[
c=\sum_{S\subset[n]} c_S.
\]
\end{Lemma}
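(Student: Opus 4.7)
The plan is to show that the forgetful map $c$ intertwines the two direct sum decompositions, the one constructed in the proof of Proposition~\ref{prop:FK-decomposition} for $KSp$ and the analogous one for $K$ in \eqref{eq:K-decomposition}, and then to conclude by induction that $c$ itself breaks up as $\sum_S c_S$.

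First I would recall that both decompositions are built inductively out of exactly the same ingredients: the long exact sequence of the pair $(T^n\times\R^p,\,T^{n-1}\times\R^p)$, together with a splitting coming from the projection $\pi\colon T^n\times\R^p\to T^{n-1}\times\R^p$. After identifying $(T^n,T^{n-1})$ via excision with a one-point compactification, one obtains an isomorphism
\[
i_!+\pi^{*}\colon \E^{q}(T^{n-1}\times\R^{p+1})\oplus \E^{q}(T^{n-1}\times\R^{p})\xrightarrow{\ \cong\ } \E^{q}(T^n\times\R^p)
\]
for both $\E=KSp$ and $\E=K$, where $i\colon \R\hookrightarrow T^1\subset T^n$ is an open embedding. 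The forgetful transformation $c$ is induced by a map of spectra from the quaternionic $K$-theory spectrum to the complex $K$-theory spectrum, hence it commutes with pullbacks $\pi^*$ and with push-forwards $i_!$ coming from open embeddings (the latter being implemented by excision and boundary maps, which are natural with respect to maps of cohomology theories).

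With this naturality in hand, the inductive step becomes a diagram chase. Assuming the decomposition and its compatibility with $c$ holds on $T^{n-1}$, one obtains a commutative diagram
\[
\begin{CD}
KSp^{q}(T^{n-1}{\times}\R^{p+1})\oplus KSp^{q}(T^{n-1}{\times}\R^{p}) @>{i_!+\pi^*}>{\cong}> KSp^{q}(T^n\times\R^p) \\
@V{c\oplus c}VV @VV{c}V \\
K^{q}(T^{n-1}{\times}\R^{p+1})\oplus K^{q}(T^{n-1}{\times}\R^{p}) @>{i_!+\pi^*}>{\cong}> K^{q}(T^n\times\R^p).
\end{CD}
\]
On the left, the inductive hypothesis identifies the two summands with $\bigoplus_{S\ni n}\E^{q}(\R^{S}\times\R^{p})$ and $\bigoplus_{S\not\ni n}\E^{q}(\R^{S}\times\R^{p})$ respectively, according to whether $S\subset[n]$ contains the new index $n$ or not, and $c\oplus c$ acts as $\sum_{S} c_{S}$ on each piece. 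Transporting this through the isomorphism on the right yields $c=\sum_{S\subset[n]}c_{S}$ on $T^n$, completing the induction.

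The one step requiring the most care is the commutativity of $c$ with $i_!$. The cleanest way to get it will be to view both $KSp$ and $K$ as represented cohomology theories and the forgetful map as induced by a genuine map of spectra; the Puppe sequence of the pair, together with excision, is then a diagram of spectra and the commutativity is automatic. Once this is in place, the rest of the argument is formal and does not require any calculation beyond bookkeeping of subsets $S\subset[n]$.
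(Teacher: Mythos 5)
Your proposal is correct and follows essentially the same route as the paper: the paper also establishes the commutative square intertwining $c$ with the isomorphisms $i_!+\pi^*$ on both the $KSp$- and $K$-sides and then concludes by induction. The only difference is that you spell out why $c$ commutes with $i_!$ (naturality of the forgetful transformation as a map of cohomology theories), which the paper leaves implicit.
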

\begin{proof}
The forgetful map $c$ builds a bridge between 
 the exact sequence \eqref{eq:KSp-exact} and the corresponding exact sequence of the complex $K$-groups,
 which gives rise to a 
commutative diagram:
\[
\begin{CD}
KSp(T^{n-1}\times\R)\oplus KSp^q(T^{n-1})@>{i_!+\pi^*}>{\cong}> KSp(T^n)\\
@V{c}VV @V{c}VV\\
K(T^{n-1}\times\R)\oplus K(T^{n-1})@>{i_!+\pi^*}>{\cong}> K(T^n).
\end{CD}
\]
Thus $c\colon KSp(T^n)\to K(T^n)$ is identified with the direct sum of the forgetful maps:
 $$KSp(T^{n-1}\times\R)\oplus KSp(T^{n-1})\to K(T^{n-1}\times\R)\oplus K(T^{n-1})$$ via the isomorphisms $i_!+\pi^*$.
The lemma is proved by inductition.
\end{proof}

\begin{proof}[Proof of Lemma~\ref{lem: Novikov theorem type argument}]
Suppose $X_I$ is smoothable as a family and a smooth reduction is given.
Let $T(X_I/T_I) \to X_I$ be the tangent bundle along the fibers.
By \propref{prop:FK-decomposition} and \eqref{eq:K-decomposition}, we have  the splittings
\begin{align*}
KSp(T^4)\cong & KSp(pt)\oplus KSp(\R^4)\cong \Z\oplus \Z, \\ 
K(T^4)\cong & K(pt)\oplus K(\R^4)\cong \Z\oplus \Z.
\end{align*}
Since $c_S\colon KSp(\R^S)\to K(\R^S)$ is injective if $S=\emptyset$ or $|S|=4$, \lemref{lem:forgetKSp} implies that the forgetful map $c\colon KSp(T^4)\to K(T^4)$ is injective.
Therefore, 
in order to verify 
 $[\ind D]=[\underline{\HH}]$, it suffices to check   that $c([\ind D])=[\underline{\C^2}]$.
	
Since $K(T^k)$ is torsion free, the Chern character  $Ch : K(T^k) \to H^{\rm even}(T^k;\Q)$ is also injective. 
The index theorem for families \cite[Theorem~(5.1)]{MR279833} gives the equality
\begin{equation}\label{eq:family-ind}
Ch(c([\ind D])) = \int_{\rm fiber} \hat{A}(T(X_I/T_I)),
\end{equation}
and the integrand is expressed by a polynomial  of rational  Pontrjagin classes, and 
so belongs to $H^{4\ast}(T_I;\Q)$.
%However since $\dim{B} \leq 3$, the only non-trivial part is $\int_{\rm fiber} -p_{1}(T(X/B))/24 \in H^{0}(B ;\Q)$.
%Through $Ch$, the degree zero part of $H^{\rm even}(B;\Q)$ comes from the equivalence class of a trivial bundle or $(-1)$-multiplication of it.
Denote by $p_{i} = p_{i}(T(X_I/T_I)) \in H^{4i}(X_I;\Q)$ the $i$-th rational Pontrjagin classes of $T(X_I/T_I)$.

Once we have seen the vanishings
 $p_{1}^{2}=0$ and $p_{i}=0$ for $i \geq 2$ in $H^{\ast}(X_I;\Q)$, then the
  $\hat{A}$-genus of $T(X_I/T_I)$ is given by
   $\hat{A}(T(X_I/T_I)) = 1 -p_{1}/24$.
Then $Ch(c([\ind D]))$ is in $H^0(T_I;\Q)=\Q$ and actually it coincides with 
$-\sign(M)/8 =2$.
This implies $c([\ind D])=[\underline{\C^2}]$. 

Therefore it suffices to verify that $p_{1}^{2}=0$ and $p_{i}=0$ for $i \geq 2$.
Note that  $p_{i}=0$ hold  for $i \geq 3$,
 since $\rank_{\R}{T(X_I/T_I)}=4$.
Therefore we just need to check that $p_{1}^{2}=0$ and $p_2=0$.
We shall verify  such vanishings  directly in the topological category  from the construction of $X$ as follows.

Set $M^\prime=2(-E_8)\#(m-k)S^2\times S^2$ and  $W = T^k \times M^\prime$. % be the product of a manifold $B$ and an oriented closed topological $4$-manifold $M$.
Let $\tau M^\prime$ and $\tau(W/T^k)$ be the tangent micro-bundle of $M'$ and the one  along the fibers of $W$ respectively.
Thus we have $\tau(W/T^k) \cong \pi_{2}^{\ast}\tau M^\prime$, where $\pi_{2} : W \to M^\prime$ is the projection.
Therefore it follows from degree reason that
\begin{equation}\label{eq: for trivial e8 bundle}
p_{1}(\tau(W/T^k))^{2}=0,\quad p_{i}(\tau(W/T^k))=0 \text{ for $i \geq 2$}.
\end{equation}

Now decompose  $X_I$ as 
\[
X_I = \left(T^{k} \times (M^\prime \setminus \sqcup_{i=1}^{k} B^{4}_{i}) \right) \cup \bigsqcup_{i=1}^{k} (T^{k-1} \times \left( N_{f_{i}} \setminus S^{1} \times B^{4}_{i}) \right),
\]
where $N=S^{2} \times S^{2}$ and $B^{4}_{i}$ are embedded balls.
Let 
\[
\kappa\colon X_I \to T^{k} \times M^\prime
\]
be the collapsing map which collapses each $T^{k-1} \times \left( N_{f_{i}} \setminus S^{1} \times B^{4}_{i}) \right)$-part into $T^k\times \ast$. 
%Let $\psi : X \to BTOP[0]$ be the composition of the classifying map $X \to BO$ of the tangent microbundle along the fibers $T(X/T^{m})$ and the natural map $BO \to BO[0] \to BTOP[0]$.
Let $\psi : X_I \to BTOP[0]$ be the composition of the classifying map $X_I \to BTOP$ of the tangent micro-bundle along the fibers $\tau(X_I/T_I)$ with the natural map $BTOP \to BTOP[0]$.
Let $\psi^\prime\colon T^{k} \times M^\prime\to BTOP[0]$ be the similarly defined map.
By Lemma~\ref{lem: contractibity of the classifying map}, 
the restriction
\[
\psi : (T^{k-1} \times N_{f_{i}}, T^{k} \times B^{4}_{i}) \to (BTOP[0], {\rm pt})
\]
is homotopic to the constant map onto $({\rm pt},{\rm pt}) \subset (BTOP[0],{\rm pt})$.
Then the following diagram is homotopy-commutative:
\[
\xymatrix{
	X_I\ar[r]^{\psi} \ar[d]_{\kappa} &BTOP[0].\\
	T^{k} \times M^\prime\ar[ur]_{\psi^\prime}
}
\]
Thus we have \[
p_{i} = \psi^{\ast}p_{i}^{\rm univ} = \kappa^*(\psi^\prime)^*p_{i}^{\rm univ}=\kappa^*p_{i}(\tau((T^{k} \times M^\prime)/T^{k}).
\] 
By combining this with \eqref{eq: for trivial e8 bundle},
we obtain $p_{1}^{2}=0$ and $p_{i}=0$ for $i \geq 2$.
This completes the proof of the lemma.
\end{proof}

\begin{Remark}
One  can verify  $c([\ind D])=[\underline{\C^2}]$ in a more general setting.
In fact, the following can be shown by an argument above:
for arbitrary $m$, let $M$ and $f_1,\ldots,f_m$ be as in \thmref{theo: application to nonsmoothable action}. 
Let $M\to X\to T^m$ be the multiple mapping torus for $f_1,\ldots,f_m$.
If $X_I\to T_I$ is smoothed as a family for any $I\subset [m]$,  then
we have $c([\ind D])=[\underline{\C^2}]$.

On the other hand, for the proof of \thmref{theo: application to nonsmoothable family},
we need  $\ind D=[\underline{\HH}]$, but  the forgetful map $c\colon KSp(\R^q)\to K(\R^q)$ is not injective if $q\equiv 5,6$ mod $8$.
This is reason why
the argument of the proof of
\thmref{theo: application to nonsmoothable family} is valid only when $m\leq 6$, $k\leq 4$. 
\end{Remark}

\section{Smoothing of the total spaces}
\label{section: Smoothing of the total spaces}

In this section, we give a proof of the smoothablitiy of the total spaces of the non-smoothable families in 
Theorem~\ref{theo: application to nonsmoothable family}.
A basic tool in this section is Kirby--Siebenmann theory~\cite{MR0645390}.
We refer readers to Rudyak's expository book \cite{MR3467983} or the ``Essays"~\cite{MR0645390}.

\begin{Lemma}
\label{lemma: times circle}
The topological $5$-manifold $S^{1} \times 2(-E_{8})$ admits a smooth structure.
\end{Lemma}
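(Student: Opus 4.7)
The approach is to invoke the Kirby--Siebenmann smoothing theorem in dimension five: a topological $5$-manifold $V$ admits a smooth structure if and only if its Kirby--Siebenmann class $ks(V) \in H^{4}(V;\Z/2)$ vanishes. This combines two classical facts, both due to Kirby--Siebenmann \cite{MR0645390}: the sole obstruction to lifting the topological tangent microbundle to a PL microbundle on a topological manifold of dimension $\geq 5$ is exactly $ks$, and every PL manifold of dimension $\leq 7$ admits a unique compatible smooth structure, since $\pi_{i}(\mathrm{PL}/\mathrm{O})$ vanishes for $i \leq 6$. Hence it suffices to verify that $ks(S^{1} \times 2(-E_{8})) = 0$.

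To compute this class, I would use the product formula
\[
ks(M \times N) \;=\; \pi_{M}^{*}\, ks(M) \;+\; \pi_{N}^{*}\, ks(N)
\]
in $H^{4}(M \times N;\Z/2)$, where $\pi_{M}$, $\pi_{N}$ denote the two projections. This follows from the fact that $ks$ is a stable primary characteristic class of the topological tangent microbundle, together with the splitting of the tangent microbundle of a product as a Whitney sum of pull-backs. Applying the formula with $M = S^{1}$ (smooth, hence $ks(S^{1}) = 0$) and $N = 2(-E_{8})$ reduces the computation to that of $ks(2(-E_{8}))$.

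The manifold $2(-E_{8})$ is closed, simply connected and topologically spin, so $H^{4}(2(-E_{8});\Z/2) \cong \Z/2$, and under this identification the Kirby--Siebenmann class agrees with the mod-$2$ Rokhlin invariant $\sigma(2(-E_{8}))/8 \bmod 2$; this identification is precisely the mechanism by which the single summand $-E_{8}$ fails to be smoothable. Since $\sigma(2(-E_{8})) = -16$, we obtain $ks(2(-E_{8})) = -2 \equiv 0 \pmod{2}$. Combining with the product formula gives $ks(S^{1} \times 2(-E_{8})) = 0$, and the smoothing theorem then produces the desired smooth structure.

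There is no substantive obstacle here; the lemma is a direct application of Kirby--Siebenmann theory, and the only care needed is to cite the correct results from \cite{MR0645390} (or the exposition \cite{MR3467983}) for both the product formula for $ks$ and the identification of $ks$ with the Rokhlin invariant on a closed spin $4$-manifold.
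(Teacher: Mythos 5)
Your proposal is correct and follows essentially the same route as the paper: show the Kirby--Siebenmann class of $S^{1}\times 2(-E_{8})$ vanishes by reducing it to that of the $4$-manifold factor, and then invoke the fact that in dimension $5$ this class is the sole obstruction to smoothability. The only (cosmetic) difference is that you compute $ks(2(-E_{8}))=0$ via the identification of $ks$ with $\sigma/8 \bmod 2$ for simply connected topological spin $4$-manifolds, whereas the paper uses additivity of $ks$ under connected sum; both are standard and give the same answer.
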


\begin{proof}
For a topological manifold $W$, let us denote by $\Delta(W) \in H^{4}(W;\Z/2)$ the Kirby--Siebenmann invariant.
If $W$ is of dimension $5$ and written as $W = S^{1} \times N$ for a simply-connected and closed topological $4$-manifold $N$,
we have $H^{4}(W;\Z/2) \cong H^{4}(N;\Z/2)$ by the K\"{u}nneth theorem, and $\Delta(W)$ corresponds to $\Delta(N)$ via this isomorphism.
This follows from the definition of the Kirby--Siebenmann invariant as an obstruction class (see, for example, \cite[Subsection~3.4]{MR3467983}).
Since the Kirby--Siebenmann invariant is additive with respect to the connected sum of topological $4$-manifolds, we have $\Delta(2(-E_{8}))=0$, and thus we get $\Delta(S^{1} \times 2(-E_{8}))=0$.
Recall that, for a closed topological manifold of dimension $5$, the Kirby--Siebenmann invariant is the only obstruction to the smoothability.
(This follows from the celebrated theorem $TOP/PL \simeq K(\Z/2,3)$ by Kirby and Siebenmann, stated in \cite[page xii]{MR3467983}, and $\pi_{k}(TOP/PL) = \pi_{k}(PL/DIFF)$ for $k <7$ \cite[p. 318]{MR0645390}.)
Therefore this proves that $S^{1} \times 2(-E_{8})$ is smoothable.
\end{proof}

Following Schultz's survey~\cite{Schultz},
we give a smoothing result of a topological embedding of a circle into a higher-dimensional smooth manifold:

\begin{Lemma}
\label{lemma: smoothing of a topological embedding}
Let $W$ be a smooth manifold of dimension $d \geq 5$,
and $f : S^{1} \times \R^{d-1} \to W$ be a topological embedding, i.e. a homeomorphism onto its image.
Then there exists a topological isotopy 
\[
\{F_{t} : S^{1} \times \R^{d-1} \to f(S^{1} \times \R^{d-1}) \subset W\}_{t \in [0,1]}
\]
such that $F_{0} = f$ holds and $F_{1} : S^{1} \times \R^{d-1} \to W$ is a smooth embedding.
\end{Lemma}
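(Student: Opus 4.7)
The plan is to reduce the claim to a concordance statement from Kirby--Siebenmann smoothing theory applied to two smooth structures on $S^1\times\R^{d-1}$. First, I would set $U:=f(S^1\times\R^{d-1})$, which is open in the smooth manifold $W$ and hence inherits a smooth structure $\Sigma_W$; the map $f$ then becomes a homeomorphism $f\colon S^1\times\R^{d-1}\to U$. Pulling $\Sigma_W$ back along $f$ yields a second smooth structure $\Sigma_f$ on $S^1\times\R^{d-1}$, with respect to which $f$ is tautologically a diffeomorphism. Hence it suffices to construct a topological isotopy $\{h_t\}_{t\in[0,1]}$ of self-homeomorphisms of $S^1\times\R^{d-1}$ with $h_0=\mathrm{id}$ and $h_1$ a diffeomorphism $(S^1\times\R^{d-1},\Sigma_{\mathrm{std}})\to(S^1\times\R^{d-1},\Sigma_f)$, for then $F_t:=f\circ h_t$ is the required isotopy into $U\subset W$.

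Next I would apply smoothing theory as surveyed in \cite{Schultz}. Concordance classes of smoothings of a given topological manifold $N$ of dimension $\geq 5$ form a torsor over the homotopy set $[N,TOP/O]$, via the difference construction on stable tangent micro-bundle lifts from $BTOP$ to $BO$. Since $S^1\times\R^{d-1}$ is homotopy equivalent to $S^1$ and $TOP/O$ is $2$-connected (so $\pi_1(TOP/O)=0$), the two smoothings $\Sigma_{\mathrm{std}}$ and $\Sigma_f$ are necessarily concordant. The Kirby--Siebenmann product structure theorem then upgrades concordance to isotopy in dimension $\geq 5$, producing the desired family $h_t$.

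The hard part will be handling the non-compactness of $S^1\times\R^{d-1}$, since the cleanest statements of concordance-implies-isotopy in smoothing theory are usually given for compact manifolds (possibly with boundary). I expect to deal with this either by invoking the proper, non-compact version of the Kirby--Siebenmann product structure theorem, or by exhausting the $\R^{d-1}$-factor by compact disks $S^1\times D^{d-1}_n$ and building the isotopy inductively on the compact pieces rel the region already smoothed, using uniqueness of collars to glue the pieces together. A secondary point is to ensure the isotopy stays inside $U$; this is automatic from the construction $F_t=f\circ h_t$, since $f$ maps onto $U$ by definition.
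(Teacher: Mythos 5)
Your proposal is correct and follows essentially the same route as the paper: both identify the two smooth structures (the one restricted from $W$ and the one transported via $f$), invoke the Kirby--Siebenmann classification of smoothings up to isotopy by $[\,\cdot\,,TOP/O]\cong[S^{1},TOP/O]$ together with the $2$-connectivity of $TOP/O$, and conclude that $f$ is topologically isotopic within $U$ to a diffeomorphism onto $U$. The only cosmetic difference is that the paper phrases the uniqueness of smoothings directly on $U$ rather than on $S^{1}\times\R^{d-1}$, and it handles your non-compactness concern simply by citing the Essays, whose smoothing theory applies to open manifolds.
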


\begin{proof}
Set $U := f(S^{1} \times \R^{d-1})$.
We can equip the open topological manifold $U$ with the smooth structure defined as the restriction of the smooth structure of $W$, and also with the smooth structure coming from the standard smooth structure of $S^{1} \times \R^{d-1}$ via $f$.
By Kirby--Sieebenmann theory (see page 194 of the Essays~\cite{MR0645390}, and note that ``concordant implies isotopy" in $\dim \geq 5$), there is a bijection from the set of smoothing of $U$ up to isotopy to $[U,TOP/O] \cong [S^{1},TOP/O]$, which is just a single point since $TOP/O$ is known to be $2$-connected.
Hence smoothing of $U$ is unique up to isotopy.
Therefore there exists a diffeomorphism
\[
g : S^{1} \times \R^{d-1} \to U,
\]
where $U$ is equipped with the restricted smooth structure of $W$, and a topological isotopy
\[
F_{t} : S^{1} \times \R^{d-1} \to U
\]
such that $F_{0} = f$ and $F_{1}=g$.
\end{proof}

The following proposition is the goal of this section:

\begin{Proposition}
\label{prop: smoothing of the total spaces}
The total spaces $X$ of the non-smoothable families given in 
Theorem~\ref{theo: application to nonsmoothable family} are smoothable as manifolds.
\end{Proposition}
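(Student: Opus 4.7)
The total space $X$ is a topological manifold of dimension $m+4 \geq 7$, so by Kirby--Siebenmann theory $X$ admits a smooth structure if and only if its Kirby--Siebenmann invariant $\Delta(X) \in H^4(X;\Z/2)$ vanishes. My plan is to verify smoothability by decomposing $X$ into pieces arising from the connected-sum decomposition of the fiber and arranging the smoothings compatibly.

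Write $M = L \# N_1 \# \cdots \# N_m$ with $L = 2(-E_8)$ and $N_i \cong S^2 \times S^2$, so that each $f_i$ is supported inside $N_i$ and coincides there with the diffeomorphism $\varrho$. This gives a decomposition
\[
X = X_L \cup \bigcup_{i=1}^{m} X_i,
\]
glued along $\bigsqcup_i T^m \times S^3_i$, where $X_L = T^m \times L^*$ (with $L^*$ being $L$ with $m$ open balls removed) is a trivial product bundle, and each $X_i \cong T^{m-1}_i \times \mathrm{MT}(\varrho|_{N_i \setminus B_i})$ is canonically a smooth $(m+4)$-manifold (product of a torus with the mapping torus of a diffeomorphism of the smooth manifold $N_i \setminus B_i$). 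Since $\varrho$ is the identity near $\partial(N_i \setminus B_i)$, the induced smooth structure on each boundary piece $T^m \times S^3_i \subset \partial X_i$ is the standard product smoothing.

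First I would verify that $X_L$ is smoothable. By additivity of the Kirby--Siebenmann invariant under connected sum and $\Delta(-E_8) = 1 \in \Z/2$, we have $\Delta(L) = 2\Delta(-E_8) = 0$; since removing open balls does not affect $\Delta$, also $\Delta(L^*)=0$, and the product formula yields $\Delta(X_L) = \Delta(T^m) + \Delta(L^*) = 0$. As $\dim X_L \geq 7$, Kirby--Siebenmann theory produces a smooth structure on $X_L$.

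The main obstacle is the final step: arranging the smoothing on $X_L$ to agree, along each boundary component $T^m \times S^3_i$, with the standard product smoothing inherited from $X_i$. The obstruction is a relative Kirby--Siebenmann class $\delta \in H^4(X_L, \partial X_L;\Z/2)$, which by Lefschetz duality is isomorphic to $H_m(X_L;\Z/2)$ and in general large. To show $\delta = 0$, my plan is to exploit the product structure $X_L = T^m \times L^*$ together with the Kirby--Siebenmann product structure theorem: the standard product smoothing on $\partial X_L$ extends across the product collar $T^m \times S^3 \times [0,1)$ to a smoothing of a neighborhood of $\partial X_L$, and then, using $\Delta(X_L)=0$ and $\dim X_L \geq 7$, the product structure theorem extends this smoothing over all of $X_L$ rel boundary. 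Once the two smoothings agree along $\bigsqcup_i T^m \times S^3_i$, they glue to a smooth structure on $X$, completing the proof.
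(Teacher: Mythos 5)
Your decomposition of $X$ into $X_L = T^m \times L^*$ and the smooth pieces $X_i$ is sound and runs parallel to the paper's, but the crucial final step fails as written. The real difficulty of this proposition is not the abstract smoothability of $X_L$ but the requirement that the chosen smoothing be \emph{standard near $\partial X_L$}, so that it glues to the product smoothings carried by the $X_i$. You correctly identify that the obstruction to extending the standard collar smoothing over all of $X_L$ is a relative class in $H^4(X_L,\partial X_L;\Z/2)$ (plus higher obstructions), but you then dispose of it by invoking ``the product structure theorem.'' That theorem relates smoothings of $W$ to smoothings of $W\times\R$; it does not extend a germ of a smoothing near $\partial W$ over $W$ rel boundary, and vanishing of the absolute class $\Delta(X_L)$ does not force the relative class to vanish, since the latter may be a nonzero element coming from $H^3(\partial X_L;\Z/2)$. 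As it stands, the central claim of your proof is asserted rather than proved.

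The paper gets around exactly this point differently, and the comparison is instructive. It first smooths the \emph{closed} $5$-manifold $W=S^1\times 2(-E_8)$, where $\Delta(W)=0$ really is a complete obstruction because the dimension is $5$. It then invokes a separate lemma: a topological embedding $S^1\times\R^4\hookrightarrow W$ into a smooth $5$-manifold can be topologically isotoped to a \emph{smooth} embedding, because smoothings of $S^1\times\R^{d-1}$ are classified up to isotopy by $[S^1, TOP/O]=0$ ($TOP/O$ being $2$-connected) and concordance implies isotopy in dimension $\geq 5$. That isotopy is precisely what makes the resulting smoothing of $T^{m-1}\times(W\setminus S^1\times B^4)$, i.e.\ of $T^m\times L^*$ after iterating over the $m$ balls, standard along $T^m\times S^3$ --- the content your appeal to the product structure theorem was meant to supply. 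You would need to prove an analogue of this isotopy lemma (or compute the relative obstruction directly) to complete your argument. A secondary issue: your opening claim that $\Delta=0$ together with $\dim\geq 7$ implies smoothability is not literally correct in dimensions $8$--$10$, where there are further obstructions in $H^{k+1}(\,\cdot\,;\pi_k(TOP/O))$ for $k\geq 7$; this is repairable for $T^m\times L^*$ because it has the homotopy type of a low-dimensional complex, but it is another reason to reduce to dimension $5$ as the paper does.
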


\begin{proof}
Set $W = S^{1} \times 2(-E_{8})$, which admits a smooth structure by Lemma~\ref{lemma: times circle}.
Henceforth we fix a smooth structure on $W$.
Fix a point $p \in 2(-E_{8})$ and whose disk-like neighborhood $B^{4} \subset 2(-E_{8})$.
Then the map $S^{1} \to W$ given by $t \mapsto (t,p)$ induces a topological embedding $f : S^{1} \times \R^{4} \to W$.
Note that $T^{n} \times (2(-E_{8}) \setminus B^{4}) = T^{n-1} \times (W \setminus S^{1} \times B^{4})$, where $S^{1} \times B^{4}$ is the image of $f$.
By Lemma~\ref{lemma: smoothing of a topological embedding},
$f$ can be deformed into a smooth embedding $g : S^{1} \times \R^{4} \to W$ via a topological isotopy.
This gives a homeomorphism 
\[
\varphi : X_{1} \to X_{1}',
\]
where
\begin{align*}
&X_{1} := (T^{n} \times 2(-E_{8})) \setminus (T^{n-1} \times f(S^{1} \times \R^{4})), \\
&X_{1}' := (T^{n} \times 2(-E_{8})) \setminus (T^{n-1} \times g(S^{1} \times \R^{4})).
\end{align*}
Note that, although $X_{1}$ is just a topological manifold, $X_{1}'$ is a smooth manifold.

Let $f_{1}, \ldots, f_{m}$ be the homeomorphisms used in the construction of $X$ in Theorem~\ref{theo: application to nonsmoothable family}.
Recall that they act trivially on $2(-E_{8})$, and smoothly on $mS^{2} \times S^{2}$.
Let $E \to T^{n}$ be the mapping torus of $m S^{2} \times S^{2}$ by commuting diffeomorphism $f_{1}, \ldots, f_{m}$.
Let $D^{4}$ be a fixed ball common for all of $f_{1}, \ldots, f_{m}$.
(If we need, we may find such a ball by deforming $f_{1}, \ldots, f_{m}$ by smooth isotopy.)
Then $X$ can be regarded as a topological manifold obtained by gluing the topological manifold $X_{1}$ and a smooth manifold $X_{2} := E \setminus (T^{n} \times D^{4})$ via a homeomorphism.
Since $X_{1}$ is homeomorphic to a smooth manifold $X_{1}'$ via $\varphi$, the topological manifold $X = X_{1} \cup X_{2}$ is also homeomorphic to a smooth manifold, namely, $X$ is smoothable as a manifold.
\end{proof}

\appendix{}

\section{Equivariant obstruction theory}\label{sec:obstruction}

In this appendix, for readers' convenience, we summarize some basic materials of equivariant obstruction theory.
See tom Dieck's book \cite{tomDieck} for details.
Henceforth we denote by $G$ a compact Lie group.

\subsection{$G$-CW complexes}

A {\it $G$-CW complex} is a CW complex $X$ whose $n$-cells are of the forms $G / H_{\sigma} \times D^n$, where 
$H_{\sigma} \subset G$ are closed subgroups of $G$ and $D^{0}=\{\rm pt\}$.
Here the characteristic map of each cell is assumed to be a $G$-map $G / H_{\sigma} \times S^{n-1} \to X^{n-1}$, where $X^{n-1}$ denotes the $(n-1)$-skeleton of $X$.

For   a pair of $G$-CW complexes $(X,A)$, we always assume that 
$G$ acts on $X \backslash A$ freely.
Consider 
the long exact sequence of homology groups over $\Z$
\[
\cdots  \rightarrow H_{n+1}(X^{n+1},X^n) \xrightarrow{\partial}  H_n(X^n, X^{n-1})  \rightarrow  \cdots.
\]
Let $G_0 \subset G$ be the identity component. 
Then $G/G_0$ acts
on each $H_n(X^n, X^{n-1})$, and
hence
\[C_n(X,A):= H_n(X^n, X^{n-1})\]
is a $\Z[G/G_0]$-module.
Let $M$ be a $\Z[G/G_0]$-module.
Then we have the cochain complex
\[
C_G^*(X,A):= \Hom_{\Z[G/G_0]} (C_*(X,A); M)
\]
whose cohomology group $H_G^*(X,A;M)$ is 
called the {\em Bredon} cohomology.

\begin{Lemma}\label{cochain-iso}
There is a chain isomorphism
\[
C_*(X,A) \cong C_*(X/G_0, A/G_0).
\]
\end{Lemma}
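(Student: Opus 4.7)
The plan is to show the chain isomorphism by comparing both chain complexes cell-by-cell, using the freeness assumption to pin down the structure of cells not lying in $A$.

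First I would extract the consequence of freeness: because $G$ acts freely on $X \setminus A$, every $G$-cell of the form $G/H_\sigma \times D^n$ that is not contained in $A$ must have $H_\sigma$ trivial (any interior point of the cell has stabilizer conjugate to $H_\sigma$, and freeness forces $H_\sigma = \{e\}$). Hence the $n$-cells of $X$ not in $A$ are all of the form $G \times D^n$. Since $G$ is a compact Lie group, $G/G_0$ is a finite discrete group, so passing to the quotient by $G_0$ turns each such cell into $(G \times D^n)/G_0 = (G/G_0) \times D^n$, which is a disjoint union of $|G/G_0|$ ordinary $n$-cells. Thus $X/G_0$ inherits a (non-equivariant) CW structure whose cells not in $A/G_0$ are in bijection with pairs $(\sigma, gG_0)$, where $\sigma$ runs over $G$-cells of $X$ not in $A$ and $gG_0 \in G/G_0$.

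Next I would compute both chain groups. For $n \geq 1$, only cells outside $A$ contribute to $C_n(X,A) = H_n(X^n, X^{n-1})$. Excision plus the Künneth theorem give
\[
H_n(G \times D^n, G \times S^{n-1}) \cong H_0(G) \otimes H_n(D^n, S^{n-1}) \cong \Z[\pi_0(G)] = \Z[G/G_0],
\]
so $C_n(X,A) \cong \bigoplus_\sigma \Z[G/G_0]$. On the other side,
\[
H_n\bigl((G/G_0) \times D^n, (G/G_0) \times S^{n-1}\bigr) \cong \bigoplus_{gG_0 \in G/G_0} \Z,
\]
so $C_n(X/G_0, A/G_0) \cong \bigoplus_\sigma \Z[G/G_0]$ as well. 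The orbit map $\pi \colon X \to X/G_0$ sends the $n$-skeleton of $X$ to the $n$-skeleton of $X/G_0$ and is cellular, so it induces a chain map $\pi_* \colon C_*(X,A) \to C_*(X/G_0, A/G_0)$; on each cell $\sigma$ the induced map is the one coming from $\pi \colon G \to G/G_0$, which induces the identity on $H_0$. Hence $\pi_*$ is the identity on each $\Z[G/G_0]$-summand, so it is a chain isomorphism cell-wise.

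Finally I would note that $\pi_*$ automatically commutes with the boundary operators, being induced by a continuous map of pairs, so it is a genuine chain map and therefore a chain isomorphism of the whole complexes. The main technical step to be careful about is the bookkeeping with orientations and the $G/G_0$-action: one must identify the summand $\Z[G/G_0]$ arising from $H_0(G)$ (which carries a natural left $G/G_0$-action by translation) with the summand $\Z[G/G_0]$ arising from the cells $\{gG_0 \times D^n\}$ of $X/G_0$ (which a priori carries no group action) in a manner consistent with the characteristic maps chosen for each orbit of cells. Once those identifications are fixed compatibly for all orbits, the isomorphism is evident and the lemma follows.
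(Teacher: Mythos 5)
Your argument is correct and is essentially the paper's own proof: both reduce to individual cells via excision, identify $H_n(G\times(D^n,S^{n-1}))\cong \Z[\pi_0(G)]=\Z[G/G_0]\cong H_n\bigl((G/G_0)\times(D^n,S^{n-1})\bigr)$, and reassemble by a second excision. The only difference is that you make explicit two points the paper leaves implicit, namely that freeness forces the cells off $A$ to be free cells $G\times D^n$ and that the isomorphism is induced by the quotient map and hence commutes with the boundary operators.
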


\begin{proof}
Let
$\phi: \sqcup_j \ G \times (D^n_j, S^{n-1}_j) \to (X^n, X^{n-1})$
be the characteristic maps.
By excision, we have the isomorphisms
\[
\oplus_j \ H_n (G \times (D^n_j, S^{n-1}_j)) \cong H_n(X^n, X^{n-1}).
\]
The former is isomorphic to
\[
\oplus_j \ H_n (G/ G_0 \times (D^n_j, S^{n-1}_j)) \cong H_n(X^n/G_0, X^{n-1}/G_0)
\]
by another excision.
\end{proof}

\begin{Corollary}\label{cohom-iso}
We have 
\[
H_G^n(X,A;M) \cong H_{G/G_0}^n(X/G_0,A/G_0;M).
\]
\end{Corollary}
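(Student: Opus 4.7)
The plan is to deduce this corollary directly from Lemma~\ref{cochain-iso} by applying $\Hom_{\Z[G/G_0]}(-,M)$, once we verify that the chain isomorphism there is $G/G_0$-equivariant. First I would revisit the construction in the proof of Lemma~\ref{cochain-iso}: the isomorphism is built via the characteristic maps $\phi : \sqcup_j G \times (D^n_j, S^{n-1}_j) \to (X^n, X^{n-1})$, and the quotient map $X \to X/G_0$ is compatible with these (it sends the cell $G \times D^n_j$ to $G/G_0 \times D^n_j$). Since $G/G_0$ acts on both sides in a way induced from the left $G$-action on $X$, naturality of excision yields that the chain isomorphism
\[
C_*(X,A) \xrightarrow{\cong} C_*(X/G_0, A/G_0)
\]
is a $\Z[G/G_0]$-module isomorphism, not merely an abelian group isomorphism.

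Next I would note that the $G/G_0$-action on $X/G_0$ is free on the complement of $A/G_0$ (inherited from the freeness of the $G$-action on $X\setminus A$ combined with the fact that we have quotiented out $G_0$), so the Bredon cochain complex $C^*_{G/G_0}(X/G_0, A/G_0)$ in the sense defined in the appendix is exactly $\Hom_{\Z[G/G_0]}(C_*(X/G_0,A/G_0); M)$. Applying $\Hom_{\Z[G/G_0]}(-,M)$ to the equivariant chain isomorphism above yields an isomorphism of cochain complexes
\[
C^*_G(X,A) \cong C^*_{G/G_0}(X/G_0, A/G_0),
\]
and passing to cohomology gives the desired isomorphism.

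I expect no serious obstacle here — the only point that requires care is the equivariance of the chain isomorphism in Lemma~\ref{cochain-iso}, which is a matter of checking that the excision isomorphisms used there are natural with respect to the $G/G_0$-action induced from the characteristic maps. Everything else is formal.
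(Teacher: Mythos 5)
Your proposal is correct and is exactly the argument the paper intends: the corollary is stated as an immediate consequence of Lemma~\ref{cochain-iso}, obtained by applying $\Hom_{\Z[G/G_0]}(-,M)$ to the chain isomorphism and passing to cohomology. Your extra check that the excision isomorphisms are $\Z[G/G_0]$-linear (so that the chain isomorphism is one of $\Z[G/G_0]$-modules) is the right point to flag and is the only content beyond the formal step.
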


Let $Y$ be a path connected $G$-space.
Assume moreover
  that $Y$ is $n$-simple in the sense that the action of $\pi_1(Y,y_0)$ on $\pi_n(Y, y_0)$ is trivial.
Then we have a one-to-one correspondence between $\pi_n(Y,y_0)$ and $[S^n, Y]$, the space of free homotopy of maps.
The $G$-action on $Y$ induces a homomorphism
$G/G_0 \to \text{Aut}(\pi_n(Y))$,
which gives a $\Z[G/G_0]$-module structure on $\pi_n(Y)$.

\begin{Example}\label{Pin(2)}
Let $G = \Pin(2)$.
Then $G_0 =S^1$ and $G/G_0 = \Z_2$.
Let $V$ be a finite dimensional unitary representation of $G$ with 
$\dim V =n$.
The one point compactification $S^V$ of $V$ naturally admits a $G$-action, and hence
$\Z_2$ acts on 
\[
M := \pi_n(S^V) \cong \Z
\]
through the quotient homomorphism
$G \to G/G_0 = \{ \pm 1\}$.

Let $U$ be a manifold on which $G$  acts freely.
We shall consider the pair $(X,A) =(U, \partial U)$.
Define a bundle $l$ over $U/G$ with fiber $\Z$ by
\[
l := U \times_G \Z \to U/ G.
\]
Then we have isomorphisms
\[
H^n_G(U, \partial U; M) \cong H^n_{\Z_2} (U/G_0, \partial U/ G_0; M) 
\cong
H^n (U/G, \partial U/ G; l).
\] 

\end{Example}

\subsection{$G$-equivariant obstruction class}
Let $X, Y$ be path connected $G$-spaces.
Assume also that $Y$ is $n$-simple.
\begin{Theorem}\label{exact-seq}
For $n \geq 1$, there is an exact sequence
\[
[X^{n+1}, Y]^G \to \text{im} \left( [X^n, Y]^G \to [X^{n-1}, Y]^G \right)
\xrightarrow{C^{n+1}} H^{n+1}_G(X_A; \pi_n Y).
\]
\end{Theorem}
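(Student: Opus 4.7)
The plan is to construct an obstruction cochain for each $G$-map $f : X^n \to Y$ and then establish exactness by the standard cellwise inductive argument adapted to the equivariant setting. For each equivariant $(n{+}1)$-cell with characteristic map $\phi_\sigma : G/H_\sigma \times D^{n+1} \to X^{n+1}$, the composition $f \circ \phi_\sigma|_{G/H_\sigma \times S^n}$ is a $G$-map; restricting to $\{eH_\sigma\} \times S^n$ yields a class $c(f)_\sigma \in \pi_n(Y,y_0)$, which is well defined because $Y$ is $n$-simple. Since $f$ is $G$-equivariant and the $G/G_0$-action on $\pi_n(Y)$ is induced from the given action on $Y$, the assignment $\sigma \mapsto c(f)_\sigma$ defines a $\Z[G/G_0]$-linear map on $C_{n+1}(X,A)$ and hence an obstruction cochain $c(f) \in C^{n+1}_G(X,A;\pi_n Y)$.

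Next I would verify that $c(f)$ is a cocycle and that its cohomology class $C^{n+1}(f) \in H^{n+1}_G(X,A;\pi_n Y)$ depends only on the homotopy class of $f|_{X^{n-1}}$ in the image of $[X^n,Y]^G \to [X^{n-1},Y]^G$. For the cocycle property, evaluating $\delta c(f)$ on an $(n{+}2)$-cell $\tau$ reduces, via the chain isomorphism of Lemma~\ref{cochain-iso}, to the corresponding non-equivariant fact on $X/G_0$: the attaching sphere of $\tau$ lies in $X^{n+1}$, and any local extension of $f$ across the $(n{+}1)$-cells in $\partial \tau$ shows the signed sum of obstructions vanishes. For independence on $f|_{X^n}$ up to $G$-homotopy rel $X^{n-1}$, one constructs a difference cochain $d(f,f') \in C^n_G(X,A;\pi_n Y)$ satisfying $c(f) - c(f') = \delta d(f,f')$, so that $C^{n+1}$ descends to the image of $[X^n,Y]^G \to [X^{n-1},Y]^G$.

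Finally, exactness at the middle term amounts to two directions. If $f$ extends to some $g : X^{n+1} \to Y$, then by construction $c(f) = 0$, and in particular $C^{n+1}(f) = 0$. Conversely, if $C^{n+1}(f) = 0$, write $c(f) = \delta \eta$ for some $\eta \in C^n_G(X,A;\pi_n Y)$; one then modifies $f$ on the interior of each representative top-dimensional $n$-cell by a map representing $\eta(\sigma) \in \pi_n Y$ and extends $G$-equivariantly using freeness of the $G$-action on $X \setminus A$, producing $f' : X^n \to Y$ with $f'|_{X^{n-1}} = f|_{X^{n-1}}$ and $c(f') = c(f) - \delta \eta = 0$. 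Since $c(f')$ vanishes cellwise, $f'$ extends over every $(n{+}1)$-cell and hence over $X^{n+1}$.

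The main obstacle is the realization step in the converse direction: one must check that any cochain $\eta \in C^n_G(X,A;\pi_n Y)$ is genuinely the difference cochain of two $G$-maps agreeing on $X^{n-1}$. This is where both the $n$-simplicity of $Y$ (needed so that elements of $\pi_n(Y)$ correspond unambiguously to free homotopy classes of attaching sphere modifications) and the standing assumption that $G$ acts freely on $X \setminus A$ (so that each top $n$-cell is a free orbit and the cellular modification can be performed equivariantly on a single fundamental domain and then translated by $G$) are essential, together with the identification $C^*_G(X,A;\pi_n Y) \cong C^*(X/G_0, A/G_0; \pi_n Y)$ of Lemma~\ref{cochain-iso} that reduces the construction to classical non-equivariant obstruction theory.
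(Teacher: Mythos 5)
Your proposal is correct and follows essentially the same route as the paper: your cellwise obstruction cochain $\sigma \mapsto c(f)_\sigma$ is exactly the paper's $C^{n+1}(h)=h_*\circ\partial\circ\rho^{-1}$ evaluated on the cell generators of $C_{n+1}(X,A)=H_{n+1}(X^{n+1},X^n)$, with $n$-simplicity and freeness of the action on $X\setminus A$ playing the same roles in both. The remaining steps (cocycle property, difference cochains, and the realization argument giving exactness) are the standard ones that the paper defers to tom Dieck, and you have supplied them correctly.
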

A sketch of the construction of the map $C^{n+1}$ above is as follows.
Fix $[h] \in [ X^n, Y]^G$.
Let us consider the diagram
\[
H_{n+1}(X^{n+1}, X^n) \xleftarrow{\rho} 
\pi_{n+1}(X^{n+1}, X^n) \xrightarrow{\partial} \pi_n(X^n) 
\xrightarrow{h_*} \pi_n (Y) =[S^n, Y],
\]
where $\rho$ is the Hurewicz homomorphism.
Since $Y$ is $n$-simple, we have that
$\ker \rho =  \langle x - \alpha x | \alpha \in \pi_1(X^n)\rangle$ and that
$h_* \circ \partial (x - \alpha x) =0$.
Thus we obtain a well-defined cochain
\begin{align*}
C^{n+1} (h) \in 
 C_G^{n+1}(X,A; \pi_n(Y))  & = \Hom_{\Z[G/G_0]} (C_{n+1}(X,A); \pi_n(Y)) \\
& = 
\Hom_{\Z[G/G_0]} (H_{n+1}(X^{n+1},X^n); \pi_n(Y)) 
\end{align*}
given by
$C^{n+1}(h) : = h_* \circ \partial \circ \rho^{-1}$.
This constrcution gives a map 
\[
C^{n+1} : [X^n, Y]^G \to C_G^{n+1}(X,A; \pi_n(Y)),
\]
which induces $C^{n+1}$ in Theorem~\ref{exact-seq}.

\begin{Proposition}\label{homot-ext}
Let $Y$ be an $(n-1)$-connected and $n$-simple space.
Then an arbitrary continuous map
$f: A \to Y$ is extendable to a continuous map
$\tilde{f} : X^n \to Y$.
Moreover any two of such extensions are homotopic to each other rel $A$.
\end{Proposition}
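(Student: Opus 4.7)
The plan is to argue by standard equivariant cellular obstruction theory on the pair $(X,A)$. The key structural observation is that, by the standing convention fixed at the start of this appendix, $G$ acts freely on $X\setminus A$; consequently every relative equivariant cell of $(X,A)$ has the form $G\times D^{k}$, and specifying a $G$-equivariant map on such a cell is equivalent to specifying a single non-equivariant map $D^{k}\to Y$ on one representative disk. This reduction converts the $G$-equivariant extension problem into a sequence of ordinary (non-equivariant) cellular extension problems, one per $G$-orbit of cells.

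First I would construct $\tilde f\colon X^{n}\to Y$ by induction on the skeleton. Suppose a $G$-equivariant extension of $f$ has already been built on $A\cup X^{k-1}$ for some $k\le n$. For each orbit of relative $k$-cells, represented by $G\times D^{k}$, the restriction of the current map to $G\times S^{k-1}$ is controlled by a single map $\varphi\colon S^{k-1}\to Y$, whose obstruction to extension over $D^{k}$ is its class in $\pi_{k-1}(Y)$. Since $Y$ is $(n-1)$-connected and $k-1\le n-1$, this class is zero, $\varphi$ extends to $D^{k}$, and $G$-equivariance spreads the extension over the entire orbit. Iterating up to $k=n$ yields $\tilde f$.

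For the homotopy uniqueness part, given two extensions $\tilde f_{0}$ and $\tilde f_{1}$, I would apply the same skeletal induction to the $G$-CW pair $(X^{n}\times I,\, X^{n}\times\partial I\cup A\times I)$, where the subspace carries the $G$-map induced by $\tilde f_{0}$, $\tilde f_{1}$, and the constant homotopy of $f$ on $A\times I$. The relative equivariant cells have the form $G\times D^{k}\times I$ with $k\le n$; on each such cell the obstruction lies in $\pi_{k}(Y)$, and it vanishes for $k\le n-1$ by $(n-1)$-connectedness. For the top dimension $k=n$, the $n$-simple hypothesis ensures that $\pi_{n}(Y)$ is a well-defined $\Z[G/G_{0}]$-module and that the cellular obstruction assembles into a genuine Bredon cochain, as in the setup of Lemma~\ref{cochain-iso} and Corollary~\ref{cohom-iso}; combined with the compatibility coming from $\tilde f_{0}$ and $\tilde f_{1}$ agreeing on $A$, this cochain is seen to be a coboundary, producing the desired $G$-equivariant homotopy rel $A$.

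The main obstacle I expect is the top-dimensional step of the uniqueness argument: translating the cell-by-cell data into an honest Bredon cochain class and verifying its vanishing requires a careful use of the $n$-simple condition to trivialize the $\pi_{1}$-action on $\pi_{n}(Y)$ and to turn the local coefficient system into a genuine $\Z[G/G_{0}]$-module in the sense of Example~\ref{Pin(2)}. Once this bookkeeping is in place, the remainder is a routine application of the obstruction sequence appearing in Theorem~\ref{exact-seq}.
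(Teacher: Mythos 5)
Your extension argument is sound: freeness of the $G$-action on $X\setminus A$ reduces each orbit of relative $k$-cells $G\times D^{k}$ to a single non-equivariant extension problem over $D^{k}$, whose obstruction lies in $\pi_{k-1}(Y)=0$ for $k\le n$ because $Y$ is $(n-1)$-connected. The genuine gap is in the uniqueness part, at precisely the step you flag as the "main obstacle." On the top relative cells $G\times D^{n}\times I$ of $(X^{n}\times I,\ X^{n}\times\partial I\cup A\times I)$ the obstruction lives in $\pi_{n}(Y)$, which is not assumed to vanish, and the resulting difference class $\gamma(\tilde f_{0},\tilde f_{1})\in H^{n}_{G}(X,A;\pi_{n}(Y))$ is \emph{not} "seen to be a coboundary"; the $n$-simplicity hypothesis only makes the coefficient module well defined, it does not kill the class. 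Indeed, Theorem~\ref{corresp} of this very appendix says that the rel-$A$ $G$-homotopy classes of maps agreeing with a fixed map on $A$ are in bijection with $H^{n}_{G}(X,A;\pi_{n}(Y))$ via exactly this difference class, so whenever that group is nonzero (as in the paper's applications, where it is $\Z$ or $\Z/2$) distinct extensions to $X^{n}$ are genuinely non-homotopic rel $A$. A concrete non-equivariant instance: $X=S^{n}$ with one $0$-cell and one $n$-cell, $A$ the $0$-cell, $Y=S^{n}$; extensions of the constant map are arbitrary maps $S^{n}\to S^{n}$, distinguished by degree.

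What is true, and what the paper actually needs in order for the primary obstruction $\gamma(f)=C^{n+1}(\tilde f)$ to be independent of the chosen extension, is the uniqueness statement with the homotopy taken only on $X^{n-1}$: any two extensions of $f$ to $X^{n}$ become $G$-homotopic rel $A$ after restriction to $X^{n-1}$, since there all obstructions lie in $\pi_{k}(Y)$ with $k\le n-1$. That is your own induction stopped one step earlier, combined with the standard fact that the degree-$n$ difference cochain of two such extensions has coboundary $C^{n+1}(\tilde f_{0})-C^{n+1}(\tilde f_{1})$, so the class of $C^{n+1}$ in cohomology is well defined. You should either prove this $X^{n-1}$-version (which is the statement in tom Dieck's book that the appendix is summarizing), or observe that full uniqueness on $X^{n}$ requires the extra hypothesis $H^{n}_{G}(X,A;\pi_{n}(Y))=0$, which fails in the situations where the proposition is applied.
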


Let $f: A \to Y$ be a continuous map.
The {\it primary obstruction class} is given by
\[
\gamma(f) := C^{n+1}(\tilde{f}) \in H^{n+1}_G(X,A; \pi_n(Y)),
\]
where $\tilde{f}: X^n \to Y$ is an extension.
Define
\[
(X^*,A^*) := (I, \partial I) \times (X,A) 
= (X \times I, I \times A \cup \partial I \times X).
\]
Given $F: I \times A \cup \partial I \times X \to Y$, 
let us denote 
$f_i = F|_{\{i\} \times X}$ for $i =0,1$.
The {\em difference obstruction class between $f_{0}$ and $f_{1}$} is defined by
\[
\gamma(f_0,f_1) := C^{n+1}(F) \in H^{n+1}_G(X^*, A^*; \pi_n(Y))
\cong H^n_G(X,A; \pi_n(Y)),
\]
where the last isomorphism is the suspension isomorphism.

\begin{Theorem}\label{corresp}
Fix a map $f_\ast : X \to Y$.
Let us denote
\[
[X, Y]^G_A : = 
\left\{ \ G\text{-homotopy classes rel } A \text{ of }
f: X \to Y \text{ with }
f|_A = f_\ast|_A  \ \right\}.
\] 
Then we have a one-to-one correspondence
\[
[X, Y]^G_A \leftrightarrow
H^n_G(X,A; \pi_n(Y))
\]
given by $ f \leftrightarrow \gamma(f, f_\ast)$.
\end{Theorem}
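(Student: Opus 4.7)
The plan is to prove the correspondence $\Phi : [X,Y]^G_A \to H^n_G(X,A;\pi_n(Y))$, $[f] \mapsto \gamma(f,f_*)$, is bijective by a standard three-step argument. Throughout I shall identify $H^n_G(X,A;\pi_n(Y))$ with $H^{n+1}_G(X^*, A^*;\pi_n(Y))$ via the suspension isomorphism used to define $\gamma(f,f_*)$, where $(X^*,A^*) = (I,\partial I)\times (X,A)$.

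I would begin by checking that $\Phi$ is well-defined. A $G$-homotopy rel $A$ from $f_0$ to $f_1$ can be concatenated with the standard map $F_0 : A^* \to Y$ encoding the pair $(f_0, f_*)$ to produce a corresponding map encoding $(f_1, f_*)$; the two cochains $C^{n+1}$ differ by the coboundary of a cochain obtained from this concatenation on the relevant cells, so the cohomology classes agree. For injectivity, suppose $\gamma(f, f_*) = 0$. Define $F : A^* \to Y$ by $F|_{\{0\}\times X} = f$, $F|_{\{1\}\times X} = f_*$, and $F|_{I\times A}$ equal to $f_*|_A$ composed with the projection. Using Proposition~\ref{homot-ext} and the $(n-1)$-connectedness of $Y$ that is implicit here, $F$ extends to $(X^*)^n$, and the vanishing of $C^{n+1}(F) = \gamma(f,f_*)$ combined with \thmref{exact-seq} applied to $(X^*, A^*)$ provides an equivariant extension of $F$ over $(X^*)^{n+1}$, i.e.\ a $G$-homotopy rel $A$ from $f$ to $f_*$.

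For surjectivity, given a class $c \in H^n_G(X,A;\pi_n(Y))$ represented by a cocycle $\psi \in \operatorname{Hom}_{\Z[G/G_0]}(C_{n+1}(X^*,A^*), \pi_n(Y))$, I would modify the trivial homotopy $F_0$ of $f_*$ cell-by-cell. Starting from $F_0 : (X^*)^n \to Y$ extending the boundary data, for each $G$-orbit of $(n+1)$-cells of $X^*$ of type $G/H_\sigma\times D^{n+1}$ I would alter $F_0$ on a chosen representative cell by connect-summing with a bubble map $S^{n+1}\to Y$ whose free homotopy class realizes the value of $\psi$ on that chain, then propagate the modification equivariantly via the $G$-action on the orbit. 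Because $\psi$ is $\Z[G/G_0]$-equivariant and $\pi_n(Y)$ carries the compatible module structure coming from $n$-simplicity of $Y$, these choices are consistent under the $G$-action; the restriction of the resulting map $F : (X^*)^{n+1}\to Y$ (further extended using vanishing obstructions in the relevant range) to $\{0\}\times X$ yields a map $f$ with $f|_A = f_*|_A$ and $\gamma(f, f_*) = c$.

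The main obstacle is the equivariant propagation of the bubble modifications in the surjectivity step: on a single orbit $G/H_\sigma\times D^{n+1}$, the chosen bubble over a representative cell must be carried by the $G$-action to bubbles over all other cells in the orbit without contradicting the already-fixed value of $F_0$ on the boundary spheres. This is precisely where the $\Z[G/G_0]$-equivariance built into the coefficient system $\pi_n(Y)$ (see Example~\ref{Pin(2)}) and the freeness of the $G$-action on $X\setminus A$ are used; together they make the orbit sum of the modifications well-defined and ensure $C^{n+1}$ of the resulting glued map equals $\psi$ on the nose at the cochain level, so that $\gamma(f,f_*) = [\psi] = c$.
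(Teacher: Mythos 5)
The paper never proves Theorem~\ref{corresp}: the appendix explicitly defers to tom Dieck's book \cite{tomDieck}, so your attempt has to be measured against the standard textbook argument rather than against anything in the text. Your overall architecture --- well-definedness, injectivity via \thmref{exact-seq} applied to $(X^*,A^*)$, surjectivity by cell-by-cell realization of a cocycle --- is exactly that standard argument, and the first two steps are fine in outline, granting the hypotheses (that $Y$ is $(n-1)$-connected and that the relative cells of $(X,A)$ stop in dimension $n$, or at least that the higher obstructions vanish) which the paper's loose statement also suppresses and which you correctly flag as implicit.

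The surjectivity step, however, is wrong as written. The obstruction cochain $C^{n+1}(F)$ is computed from $F$ restricted to the boundary spheres of the $(n+1)$-cells, so it depends only on $F|_{(X^*)^n}$; altering $F$ on the interiors of $(n+1)$-cells of $X^*$ changes neither $C^{n+1}(F)$ nor the restriction $f=F|_{\{0\}\times X}$, which would remain $f_*$ --- so your construction only ever realizes the zero class. Moreover the values of $\psi$ lie in $\pi_n(Y)$, so the bubbles must be maps $S^{n}\to Y$, not $S^{n+1}\to Y$. The repair is standard: the relative $(n+1)$-cells of $(X^*,A^*)$ are exactly $(0,1)\times e^n$ for $n$-cells $e^n$ of $X\setminus A$, giving $C^{n+1}_G(X^*,A^*;\pi_n Y)\cong C^n_G(X,A;\pi_n Y)$; choose a cocycle $\psi$ in the latter group representing $c$, set $f=f_*$ on $X^{n-1}\cup A$, and on one $n$-cell per $G$-orbit in $X\setminus A$ replace $f_*|_{e^n}$ by its connected sum (pinching $D^n\to D^n\vee S^n$) with a map $S^n\to Y$ representing $\psi(e^n)\in\pi_n(Y)$, extended equivariantly over the orbit using freeness of the action and the $\Z[G/G_0]$-equivariance of $\psi$. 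Then the difference cochain of $f$ and $f_*$ is $\psi$ on the nose, $C^{n+1}(f)=C^{n+1}(f_*)+\delta\psi=0$ since $\psi$ is a cocycle, so $f$ extends over $X^{n+1}$ (and over all of $X$ in the relevant range), and $\gamma(f,f_*)=[\psi]=c$. With that substitution your argument becomes the standard proof.
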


\subsection{The image of forgetful map}
Let $U$ be an $n$-dimensional compact (possibly non-orientable) manifold
with boundary $\partial U \ne \emptyset$.
Assume that $\Z_2$ acts freely on the pair $(U, \partial U)$.
Let $\pi$ be the quotient map
\[
\pi : 
(U, \partial U) \to (\bar{U}, \bar{\partial U}) = (U/ \Z_2, \partial U / \Z_2).
\]
Consider a real $n$-dimensional repesentation $V$ of $\Z_2$.
For $Y = S^V$, $\Z_2$ acts on $\pi_n(Y) \cong \Z$.

\begin{Proposition}[({\it cf.} {\cite[II.4]{tomDieck}})]\label{prop:forget}
	The image of the forgetful map
	\[
	\varphi\colon H_{\Z_2}^n(U, \partial U; \pi_n(Y))\to H^n(U, \partial U; \Z)
	\]
	is $2\Z \subset \Z \cong H^n(U, \partial U; \Z)$ if $U$ is orientable, and is $\{0\}$ if $U$ is non-orientable.
\end{Proposition}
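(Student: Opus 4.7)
The plan is to reduce Proposition~\ref{prop:forget} to a direct cochain-level pairing with the (mod $2$, in the non-orientable case) relative fundamental class of $(U,\partial U)$, exploiting that the $\Z_2$-action is free.

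First, I would choose a $\Z_2$-equivariant CW structure on $(U,\partial U)$ and fix one representative $e_\sigma$ for each $\Z_2$-orbit of $n$-cells. Since the action is free, $C_n(U,\partial U)$ is then the free $\Z[\Z_2]$-module on $\{e_\sigma\}$, so an equivariant cochain $f\in\Hom_{\Z[\Z_2]}(C_n(U,\partial U);M)$ is determined by the values $a_\sigma:=f(e_\sigma)\in M\cong\Z$ and necessarily satisfies
\[
f(Te_\sigma)=\eta\,a_\sigma,
\]
where $T$ denotes the deck involution and $\eta\in\{\pm1\}$ is the sign of the $\Z_2$-action on $M=\pi_n(S^V)\cong\Z$. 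The forgetful map $\varphi$ sends this $f$ to the same formula, now regarded as an ordinary $\Z$-valued cochain on $(U,\partial U)$.

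Second, I would write the relative fundamental class of $(U,\partial U)$ in orbit-symmetric form. After choosing cell orientations compatible with a global orientation of $U$ in the orientable case (or simply working mod $2$ otherwise), the fact that $T$ acts on orientations of a connected manifold by a single global sign yields
\[
[U,\partial U]=\sum_\sigma\bigl(e_\sigma+Te_\sigma\bigr),
\]
understood as an integral cycle in $H_n(U,\partial U;\Z)$ if $U$ is orientable and as $[U,\partial U]_{\Z/2}\in H_n(U,\partial U;\Z/2)$ otherwise. Evaluating $f$ gives
\[
\langle f,[U,\partial U]\rangle
=\sum_\sigma(a_\sigma+\eta\,a_\sigma)
=(1+\eta)\sum_\sigma a_\sigma\in 2\Z,
\]
since $1+\eta\in\{0,2\}$. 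Poincar\'{e}--Lefschetz duality identifies $H^n(U,\partial U;\Z)$ with $\Z$ in the orientable case and with $\Z/2$ in the non-orientable case, and pairing with the fundamental class realizes this identification (integrally when $U$ is orientable, via reduction mod $2$ otherwise); hence the image of $\varphi$ is forced inside $2\cdot H^n(U,\partial U;\Z)$, which is $2\Z$ when $U$ is orientable and $\{0\}$ when $U$ is non-orientable.

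The main technical point will be the non-orientable case, where there is no integral fundamental class and one must pair with $[U,\partial U]_{\Z/2}$ instead. To conclude from the mod $2$ vanishing of the pairing, I would invoke the Bockstein long exact sequence, noting that $H^{n+1}(U,\partial U;\Z)=0$ for dimension reasons, to see that the reduction map $H^n(U,\partial U;\Z)\to H^n(U,\partial U;\Z/2)$ is an isomorphism between two copies of $\Z/2$; the vanishing of the pairing modulo $2$ then forces the image of $\varphi$ in $H^n(U,\partial U;\Z)$ to be trivial, as claimed.
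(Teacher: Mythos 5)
Your argument is correct, and it reaches the paper's conclusion by a more hands-on route. The paper first identifies $H^n_{\Z_2}(U,\partial U;\pi_nY)$ with the local-coefficient cohomology $H^n(\bar U,\bar{\partial U};l)$ of the quotient $\bar U = U/\Z_2$ and the forgetful map with $\pi^*$ for the double cover $\pi\colon U\to\bar U$; it then reduces mod $2$ and observes that the relevant map $\Hom(H_n(\bar U,\bar{\partial U};\Z_2),\Z_2)\to\Hom(H_n(U,\partial U;\Z_2),\Z_2)$ vanishes because $\pi_*$ kills the mod $2$ fundamental class of the degree-$2$ cover. Your computation never descends to the quotient: you exhibit the same vanishing directly at the cochain level, the point being that the $n$-cells of a free $\Z_2$-CW structure come in $T$-orbit pairs, so an equivariant cocycle evaluates on the (mod $2$) fundamental cycle to $(1\pm\eta)\sum_\sigma a_\sigma\in 2\Z$. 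This is the same underlying mechanism (``the fundamental class of a free double cover is divisible by $2$'') but your version avoids setting up Bredon-to-local-coefficient identifications and the transfer-type diagram, at the cost of having to justify the existence of a free $\Z_2$-CW structure (lift one from $\bar U$) and the duality statements you pair against. Two small points to tighten: first, if $T$ reverses the orientation of $U$, then with cell orientations compatible with a global orientation the fundamental cycle is $\sum_\sigma(e_\sigma - Te_\sigma)$, so the evaluation is $(1-\eta)\sum_\sigma a_\sigma$ rather than $(1+\eta)\sum_\sigma a_\sigma$; since $1\pm\eta\in\{0,\pm 2\}$ in either case this does not affect the conclusion, but the formula as written silently assumes $T$ preserves orientation. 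Second, like the paper's own proof, your argument only shows the image is \emph{contained} in $2\Z$ (respectively is $\{0\}$) rather than equal to $2\Z$ in the orientable case; this containment is all that is used in fact (F3), so no harm is done, but the equality asserted in the statement would require exhibiting an equivariant class hitting $2$.
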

\begin{proof}
Let us consider the bundle
\[
l := U \times_{\Z_2} \pi_n(Y) \to \bar{U}.
\]
Since the $\Z_2$-action on $(U,\partial U)$ is free,  the Bredon cohomology $H_{\Z_2}^n(U, \partial U; \pi_n(Y))$ is identified with the $l$-coefficient cohomology $H^n(\bar{U}, \bar{\partial U} ; l)$, and we have  the commutative diagram
\[
\begin{CD}
H_{\Z_2}^n(U, \partial U; \pi_n(Y))   @>{\cong}>> H^n(\bar{U}, \bar{\partial U} ; l)  \\
@V{\varphi}VV @VV{\pi_*}V \\
H^n(U, \partial U; \Z) @= H^n(U, \partial U; \Z).
\end{CD}
\]
Then the conclusion follows from the commutative diagram below
\[
\begin{CD}
H^n(\bar{U}, \bar{\partial U} ; l)@>>> H^n(\bar{U}, \bar{\partial U}; l \otimes \Z_2) 
@>{\cong}>> \Hom(H_n(\bar{U}, \bar{\partial U};\Z_2), \Z_2)\cong\Z_2 \\
@V{\pi^*}VV @VVV @V{0}VV\\
H^n(U, \partial U ; \Z)@>>> H^n(U, \partial U; \Z_2) @>{\cong}>> \Hom(H_n(U, \partial U;\Z_2), \Z_2)\cong\Z_2. \\
\end{CD}
\]
Here the right vertical map is zero since it is induced from the quotient map of the double cover
$(U, \partial U) \to ( \bar{U}, \bar{\partial U})$.
\end{proof}

\begin{bibdiv}
 \begin{biblist}
 
 \bib{MR279833}{article}{
   author={Atiyah, M. F.},
   author={Singer, I. M.},
   title={The index of elliptic operators. IV},
   journal={Ann. of Math. (2)},
   volume={93},
   date={1971},
   pages={119--138},
   issn={0003-486X},
   review={\MR{279833}},
   doi={10.2307/1970756},
}

\bib{B}{article}{
   author={Bauer, Stefan},
   title={Almost complex $4$-manifolds with vanishing first Chern class},
   journal={J. Differential Geom.},
   volume={79},
   date={2008},
   number={1},
   pages={25--32},
   issn={0022-040X},
   review={\MR{2414748}},
}

\bib{BF}{article}{
   author={Bauer, Stefan},
   author={Furuta, Mikio},
   title={A stable cohomotopy refinement of Seiberg-Witten invariants. I},
   journal={Invent. Math.},
   volume={155},
   date={2004},
   number={1},
   pages={1--19},
   issn={0020-9910},
   review={\MR{2025298}},
   doi={10.1007/s00222-003-0288-5},
}

\bib{Baraglia}{article}{
   author={Baraglia, David},
   title={Obstructions to smooth group actions on 4-manifolds from families
   Seiberg-Witten theory},
   journal={Adv. Math.},
   volume={354},
   date={2019},
   pages={106730, 32},
   issn={0001-8708},
   review={\MR{3981995}},
   doi={10.1016/j.aim.2019.106730},
}

\bib{Baraglia2}{article}{
	author = {Baraglia, D.},
	title = {Constraints on families of smooth 4-manifolds from Bauer-Furuta invariants},
	eprint = {arXiv:1907.03949},
}

\bib{BK}{article}{
   author={Baraglia, David},
   author={Konno, Hokuto},
   title={A gluing formula for families Seiberg--Witten invariants},
   journal={Geom. Topol.},
   volume={24},
   date={2020},
   number={3},
   pages={1381--1456},
   issn={1465-3060},
   review={\MR{4157556}},
   doi={10.2140/gt.2020.24.1381},
}

\bib{BK2}{article}{
	author = {Baraglia, D.},
	author = {Konno, H.},
	title = {On the Bauer-Furuta and Seiberg-Witten invariants of families of $4$-manifolds},
	eprint = {arXiv:1903.01649},
}

\bib{MR0236922}{article}{
   author={Boardman, J. M.},
   author={Vogt, R. M.},
   title={Homotopy-everything $H$-spaces},
   journal={Bull. Amer. Math. Soc.},
   volume={74},
   date={1968},
   pages={1117--1122},
   issn={0002-9904},
   review={\MR{0236922}},
   doi={10.1090/S0002-9904-1968-12070-1},
}

\bib{tomDieck}{book}{
   author={tom Dieck, T.}
   title={Transformation groups},
    publisher={Walter de Gruyter}, 
    date={2011},
}

\bib{MR1066174}{article}{
   author={Donaldson, S. K.},
   title={Polynomial invariants for smooth four-manifolds},
   journal={Topology},
   volume={29},
   date={1990},
   number={3},
   pages={257--315},
   issn={0040-9383},
   review={\MR{1066174}},
   doi={10.1016/0040-9383(90)90001-Z},
}

\bib{MR1883043}{book}{
   author={Donaldson, S. K.},
   title={Floer homology groups in Yang-Mills theory},
   series={Cambridge Tracts in Mathematics},
   volume={147},
   note={With the assistance of M. Furuta and D. Kotschick},
   publisher={Cambridge University Press, Cambridge},
   date={2002},
   pages={viii+236},
   isbn={0-521-80803-0},
   review={\MR{1883043}},
   doi={10.1017/CBO9780511543098},
}

\bib{MR2850125}{book}{
   author={Farb, Benson},
   author={Margalit, Dan},
   title={A primer on mapping class groups},
   series={Princeton Mathematical Series},
   volume={49},
   publisher={Princeton University Press, Princeton, NJ},
   date={2012},
   pages={xiv+472},
   isbn={978-0-691-14794-9},
   review={\MR{2850125}},
}

\bib{MR679066}{article}{
   author={Freedman, Michael Hartley},
   title={The topology of four-dimensional manifolds},
   journal={J. Differential Geometry},
   volume={17},
   date={1982},
   number={3},
   pages={357--453},
   issn={0022-040X},
   review={\MR{679066}},
}

\bib{MR1201584}{book}{
   author={Freedman, Michael H.},
   author={Quinn, Frank},
   title={Topology of $4$-manifolds},
   series={Princeton Mathematical Series},
   volume={39},
   publisher={Princeton University Press, Princeton, NJ},
   date={1990},
   pages={viii+259},
   isbn={0-691-08577-3},
   review={\MR{1201584}},
}

\bib{MR1839478}{article}{
   author={Furuta, M.},
   title={Monopole equation and the $\frac{11}8$-conjecture},
   journal={Math. Res. Lett.},
   volume={8},
   date={2001},
   number={3},
   pages={279--291},
   issn={1073-2780},
   review={\MR{1839478}},
   doi={10.4310/MRL.2001.v8.n3.a5},
}

\bib{Furuta}{article}{
	author = {Furuta, M.},
	title = {Stable homotopy version of Seiberg-Witten invariant},
	note = {preprint},
}

\bib{FK}{article}{
	author={Furuta, M.},
	author={Kametani, Y.},
	title={Equivariant maps between sphere bundles over tori and KO-degree},
	eprint = {arXiv:math/0502511},
}

\bib{FKM}{article}{
   author={Furuta, M.},
   author={Kametani, Y.},
   author={Matsue, H.},
   title={Spin $4$-manifolds with signature $=-32$},
   journal={Math. Res. Lett.},
   volume={8},
   date={2001},
   number={3},
   pages={293--301},
   issn={1073-2780},
   review={\MR{1839479}},
   doi={10.4310/MRL.2001.v8.n3.a6},
}

\bib{MR562642}{article}{
   author={Hatcher, A. E.},
   title={Linearization in $3$-dimensional topology},
   conference={
      title={Proceedings of the International Congress of Mathematicians
      (Helsinki, 1978)},
   },
   book={
      publisher={Acad. Sci. Fennica, Helsinki},
   },
   date={1980},
   pages={463--468},
   review={\MR{562642}},
}

\bib{Kato}{article}{
	author = {Kato, Y.},
	title = {Nonsmoothable actions of $\Z_2 \times \Z_2$ on spin four-manifolds},
	eprint = {arXiv:1708.08030},
}

\bib{MR0645390}{book}{
   author={Kirby, Robion C.},
   author={Siebenmann, Laurence C.},
   title={Foundational essays on topological manifolds, smoothings, and
   triangulations},
   note={With notes by John Milnor and Michael Atiyah;
   Annals of Mathematics Studies, No. 88},
   publisher={Princeton University Press, Princeton, N.J.; University of
   Tokyo Press, Tokyo},
   date={1977},
   pages={vii+355},
   review={\MR{0645390}},
}

\bib{MR0180986}{article}{
   author={Kister, J. M.},
   title={Microbundles are fibre bundles},
   journal={Ann. of Math. (2)},
   volume={80},
   date={1964},
   pages={190--199},
   issn={0003-486X},
   review={\MR{0180986}},
   doi={10.2307/1970498},
}

\bib{MR0179792}{article}{
   author={Kuiper, Nicolaas H.},
   title={The homotopy type of the unitary group of Hilbert space},
   journal={Topology},
   volume={3},
   date={1965},
   pages={19--30},
   issn={0040-9383},
   review={\MR{0179792}},
   doi={10.1016/0040-9383(65)90067-4},
}

\bib{MR2259057}{article}{
   author={Li, Tian-Jun},
   title={Symplectic $4$-manifolds with Kodaira dimension zero},
   journal={J. Differential Geom.},
   volume={74},
   date={2006},
   number={2},
   pages={321--352},
   issn={0022-040X},
   review={\MR{2259057}},
}

\bib{MR2264722}{article}{
   author={Li, Tian-Jun},
   title={Quaternionic bundles and Betti numbers of symplectic $4$-manifolds
   with Kodaira dimension zero},
   journal={Int. Math. Res. Not.},
   date={2006},
   pages={Art. ID 37385, 28},
   issn={1073-7928},
   review={\MR{2264722}},
   doi={10.1155/IMRN/2006/37385},
}

\bib{MR1868921}{article}{
   author={Li, Tian-Jun},
   author={Liu, Ai-Ko},
   title={Family Seiberg-Witten invariants and wall crossing formulas},
   journal={Comm. Anal. Geom.},
   volume={9},
   date={2001},
   number={4},
   pages={777--823},
   issn={1019-8385},
   review={\MR{1868921 (2002k:57074)}},
}

\bib{MR2884233}{book}{
   author={May, J. P.},
   author={Ponto, K.},
   title={More concise algebraic topology},
   series={Chicago Lectures in Mathematics},
   note={Localization, completion, and model categories},
   publisher={University of Chicago Press, Chicago, IL},
   date={2012},
   pages={xxviii+514},
   isbn={978-0-226-51178-8},
   isbn={0-226-51178-2},
   review={\MR{2884233}},
}

\bib{MR0161346}{article}{
   author={Milnor, J.},
   title={Microbundles. I},
   journal={Topology},
   volume={3},
   date={1964},
   number={suppl. 1},
   pages={53--80},
   issn={0040-9383},
   review={\MR{0161346}},
   doi={10.1016/0040-9383(64)90005-9},
}

\bib{MR1367507}{book}{
   author={Morgan, John W.},
   title={The Seiberg-Witten equations and applications to the topology of
   smooth four-manifolds},
   series={Mathematical Notes},
   volume={44},
   publisher={Princeton University Press, Princeton, NJ},
   date={1996},
   pages={viii+128},
   isbn={0-691-02597-5},
   review={\MR{1367507}},
}

\bib{MS}{article}{
   author={Morgan, John W.},
   author={Szab\'{o}, Zolt\'{a}n},
   title={Homotopy $K3$ surfaces and mod $2$ Seiberg-Witten invariants},
   journal={Math. Res. Lett.},
   volume={4},
   date={1997},
   number={1},
   pages={17--21},
   issn={1073-2780},
   review={\MR{1432806}},
   doi={10.4310/MRL.1997.v4.n1.a2},
}

\bib{MR2574141}{article}{
   author={M\"{u}ller, Christoph},
   author={Wockel, Christoph},
   title={Equivalences of smooth and continuous principal bundles with
   infinite-dimensional structure group},
   journal={Adv. Geom.},
   volume={9},
   date={2009},
   number={4},
   pages={605--626},
   issn={1615-715X},
   review={\MR{2574141}},
   doi={10.1515/ADVGEOM.2009.032},
}

\bib{MR2644908}{article}{
   author={Nakamura, Nobuhiro},
   title={Smoothability of $\Bbb Z\times\Bbb Z$-actions on $4$-manifolds},
   journal={Proc. Amer. Math. Soc.},
   volume={138},
   date={2010},
   number={8},
   pages={2973--2978},
   issn={0002-9939},
   review={\MR{2644908}},
   doi={10.1090/S0002-9939-10-10413-4},
}

\bib{MR1787219}{book}{
   author={Nicolaescu, Liviu I.},
   title={Notes on Seiberg-Witten theory},
   series={Graduate Studies in Mathematics},
   volume={28},
   publisher={American Mathematical Society, Providence, RI},
   date={2000},
   pages={xviii+484},
   isbn={0-8218-2145-8},
   review={\MR{1787219}},
   doi={10.1090/gsm/028},
}

\bib{MR1653371}{article}{
   author={Park, B. Doug},
   author={Szab\'{o}, Zolt\'{a}n},
   title={The geography problem for irreducible spin four-manifolds},
   journal={Trans. Amer. Math. Soc.},
   volume={352},
   date={2000},
   number={8},
   pages={3639--3650},
   issn={0002-9947},
   review={\MR{1653371}},
   doi={10.1090/S0002-9947-00-02467-3},
}

\bib{MR679069}{article}{
   author={Quinn, Frank},
   title={Ends of maps. III. Dimensions $4$ and $5$},
   journal={J. Differential Geom.},
   volume={17},
   date={1982},
   number={3},
   pages={503--521},
   issn={0022-040X},
   review={\MR{679069}},
}

\bib{MR868975}{article}{
   author={Quinn, Frank},
   title={Isotopy of $4$-manifolds},
   journal={J. Differential Geom.},
   volume={24},
   date={1986},
   number={3},
   pages={343--372},
   issn={0022-040X},
   review={\MR{868975}},
}

\bib{MR1671187}{article}{
   author={Ruberman, Daniel},
   title={An obstruction to smooth isotopy in dimension $4$},
   journal={Math. Res. Lett.},
   volume={5},
   date={1998},
   number={6},
   pages={743--758},
   issn={1073-2780},
   review={\MR{1671187 (2000c:57061)}},
   doi={10.4310/MRL.1998.v5.n6.a5},
}

\bib{MR1734421}{article}{
   author={Ruberman, Daniel},
   title={A polynomial invariant of diffeomorphisms of $4$-manifolds},
   conference={
      title={Proceedings of the Kirbyfest},
      address={Berkeley, CA},
      date={1998},
   },
   book={
      series={Geom. Topol. Monogr.},
      volume={2},
      publisher={Geom. Topol. Publ., Coventry},
   },
   date={1999},
   pages={473--488 (electronic)},
   review={\MR{1734421 (2001b:57073)}},
   doi={10.2140/gtm.1999.2.473},
}

\bib{MR1874146}{article}{
   author={Ruberman, Daniel},
   title={Positive scalar curvature, diffeomorphisms and the Seiberg-Witten
   invariants},
   journal={Geom. Topol.},
   volume={5},
   date={2001},
   pages={895--924 (electronic)},
   issn={1465-3060},
   review={\MR{1874146 (2002k:57076)}},
   doi={10.2140/gt.2001.5.895},
}

\bib{MR1809302}{article}{
   author={Ruberman, Daniel},
   author={Strle, Sa\v{s}o},
   title={Mod $2$ Seiberg-Witten invariants of homology tori},
   journal={Math. Res. Lett.},
   volume={7},
   date={2000},
   number={5-6},
   pages={789--799},
   issn={1073-2780},
   review={\MR{1809302}},
   doi={10.4310/MRL.2000.v7.n6.a11},
}

\bib{MR3467983}{book}{
   author={Rudyak, Yuli},
   title={Piecewise linear structures on topological manifolds},
   publisher={World Scientific Publishing Co. Pte. Ltd., Hackensack, NJ},
   date={2016},
   pages={xxii+106},
   isbn={978-981-4733-78-6},
   review={\MR{3467983}},
   doi={10.1142/9887},
}

\bib{Schultz}{article}{
	author = {Reinhard Schultz},
	title = {Smoothable submanifolds of a smooth manifold},
	eprint = {http://math.ucr.edu/~res/miscpapers/smoothablesubmflds.pdf},
}

\bib{MR0248877}{article}{
   author={Segal, Graeme},
   title={Equivariant contractibility of the general linear group of Hilbert
   space},
   journal={Bull. London Math. Soc.},
   volume={1},
   date={1969},
   pages={329--331},
   issn={0024-6093},
   review={\MR{0248877}},
   doi={10.1112/blms/1.3.329},
}

\bib{Switzer}{book}{
	title={Algebraic topology--homotopy and homology},
	author={Switzer, Robert M},
	year={2017},
	publisher={Springer}
}

\bib{MR2652709}{article}{
   author={Szymik, Markus},
   title={Characteristic cohomotopy classes for families of 4-manifolds},
   journal={Forum Math.},
   volume={22},
   date={2010},
   number={3},
   pages={509--523},
   issn={0933-7741},
   review={\MR{2652709}},
}

\bib{Wall}{article}{
   author={Wall, C. T.},
   title={Diffeomorphisms of $4$-Manifolds},
   journal={J. London Math. Soc.},
 volume = {s1-39},
    date={1964},
pages = {131--140},
doi = {10.1112/jlms/s1-39.1.131},  
}

\bib{Watanabe}{article}{
	author = {Watanabe, T.},
	title = {Some exotic nontrivial elements of the rational homotopy groups of ${\rm Diff}(S^{4})$},
	eprint = {arXiv:1812.02448},
}

\end{biblist}
\end{bibdiv} 

\end{document}